\newcommand{\TODO}[1]{}
\newcommand{\Lining}{\diamond}
\newcommand{\UN}{\mathrm{UN}}
\newcommand{\ER}{\bbG}
\newcommand{\Reg}{\bbR}
\newcommand{\nice}{\text{nice}}
\newcommand{\dist}{\mathrm{dist}}
\newcommand{\depth}{\mathsf{depth}}
\newcommand{\Leaves}{\mathsf{Leaves}}
\newcommand{\exc}{\operatorname{exc}}
\begin{document}
\title{Explicit two-sided unique-neighbor expanders}

\author{Jun-Ting Hsieh\thanks{Carnegie Mellon University.  \texttt{juntingh@cs.cmu.edu}.  Supported by NSF CAREER Award \#2047933.}
\and Theo McKenzie\thanks{Stanford University.  \texttt{theom@stanford.edu}. Supported by NSF GRFP Grant DGE-1752814 and NSF Grant DMS-2212881.}
\and Sidhanth Mohanty\thanks{MIT. \texttt{sidhanth@csail.mit.edu}.  Much of this work was conducted while the author was a PhD student at UC Berkeley.}
\and Pedro Paredes\thanks{Princeton University. \texttt{pparedes@cs.princeton.edu}.}}

\maketitle

\begin{abstract}
    We study the problem of constructing explicit sparse graphs that exhibit strong vertex expansion. Our main result is the first two-sided construction of imbalanced unique-neighbor expanders, meaning bipartite graphs where small sets contained in both the left and right bipartitions exhibit unique-neighbor expansion, along with algebraic properties relevant to constructing quantum codes.

    Our constructions are obtained from instantiations of the \emph{tripartite line product} of a large tripartite spectral expander and a sufficiently good constant-sized unique-neighbor expander, a new graph product we defined that generalizes the \emph{line product} in the work of Alon and Capalbo \cite{AC02} and the \emph{routed product} in the work of Asherov and Dinur \cite{AD23}.
    To analyze the vertex expansion of graphs arising from the tripartite line product, we develop a sharp characterization of subgraphs that can arise in bipartite spectral expanders, generalizing results of Kahale \cite{Kah95}, which may be of independent interest.
    
    By picking appropriate graphs to apply our product to, we give a strongly explicit construction of an infinite family of $(d_1,d_2)$-biregular graphs $(G_n)_{n\ge 1}$ (for large enough $d_1$ and $d_2$) where all sets $S$ with fewer than a small constant fraction of vertices have $\Omega(d_1\cdot |S|)$ unique-neighbors (assuming $d_1 \leq d_2$).
    Additionally, we can also guarantee that subsets of vertices of size up to $\exp(\Omega(\sqrt{\log |V(G_n)|}))$ expand \emph{losslessly}.

\end{abstract}

\thispagestyle{empty}
\setcounter{page}{0}
\newpage

\thispagestyle{empty}
\setcounter{page}{0}
\tableofcontents
\thispagestyle{empty}
\setcounter{page}{0}
\newpage

\section{Introduction}
A bipartite graph $G$ is a \emph{one-sided unique-neighbor expander} if every small subset of its left vertices has many \emph{unique-neighbors}, where a unique-neighbor of a set $S$ is a vertex $v$ with exactly one edge to $S$.
Classically, there is a wealth of applications of one-sided unique-neighbor expanders to error-correcting codes~\cite{SS96,DSW06,BV09}, high-dimensional geometry~\cite{BGI+08,GLR10,Kar11,GMM22}, and routing~\cite{ALM96},
as well as several explicit constructions \cite{AC02,CRVW02,AD23,Gol23,Coh}.

A recent work of Lin \& Hsieh \cite{LH22b} established a connection between quantum error-correcting codes and \emph{two-sided unique-neighbor expanders}, which are graphs where every small subset of both the left and right vertices has many unique-neighbors.
In particular, they showed that good \emph{quantum low-density parity check (LDPC) codes} with efficient decoding algorithms can be obtained from two-sided \emph{lossless} expanders satisfying certain algebraic properties, with the additional advantage of being simpler to analyze than earlier constructions of good quantum codes \cite{PK22,LZ22}.
Here, lossless expanders are graphs achieving the quantitatively strongest form of unique-neighbor expansion possible.
A random biregular graph is a two-sided lossless expander with high probability, but no explicit constructions are known; all explicit constructions of one-sided unique-neighbor expanders are not known to satisfy two-sided expansion.

The main contribution of this work is to give explicit constructions of infinite families of two-sided unique-neighbor expanders.
We now delve into our results and provide context.





\subsection{Our results}

We give a formal description of the graphs we would like to construct, motivated by constructing quantum codes, and then describe our contributions.
\begin{definition}[Two-sided (algebraic) unique-neighbor expander]
    We say a $(d_1,d_2)$-biregular graph $Z$ with left and right vertex sets $L$ and $R$ respectively is a $\gamma$-\emph{two-sided unique-neighbor expander} if there is a constant $\delta$ depending on $\gamma, d_1, d_2$ such that:
    \begin{enumerate}
        \item Every subset $S\subseteq L$ with $|S| < \delta |L|$ has at least $\gamma \cdot d_1|S|$ neighbors in $R$.
        \item Every subset $S\subseteq R$ with $|S| < \delta |R|$ has at least $\gamma \cdot d_2|S|$ neighbors in $L$.
    \end{enumerate}
    We say $Z$ is a $\gamma$-\emph{two-sided algebraic unique-neighbor expander} if additionally: there is a group $\Gamma$ of size $\Omega(|L|+|R|)$ that acts on $L$ and $R$ such that $gv = v$ iff $g$ is the identity element of $\Gamma$, and $\{gu,gv\}$ is an edge iff $\{u,v\}$ is an edge in $Z$.
\end{definition}
The work of Lin \& Hsieh \cite{LH22b} proves that for $\gamma > 1/2$, the existence of $\gamma$-two-sided algebraic unique-neighbor expanders with arbitrary aspect ratio implies the existence of linear-time decodable quantum LDPC codes.

We give an explicit construction of $\gamma$-two-sided algebraic unique-neighbor expanders for small constant $\gamma$.
\begin{theorem}[Two-sided algebraic unique-neighbor expanders.]   \label{thm:main}
    For every $\beta\in(0,1/2]$ there is a constant $\gamma > 0$ such that for all large enough $d_1,d_2$ with $1\ge\frac{d_1}{d_2} \ge \frac{\beta}{1-\beta}$, there is an explicit infinite family of $(2d_1, 2d_2)$-biregular graphs $(Z_n)_{n\ge 1}$ where every $Z_n$ is a $\gamma$-two-sided algebraic unique-neighbor expander.
\end{theorem}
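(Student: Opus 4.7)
The plan is to construct the family $(Z_n)_{n\ge 1}$ as instances of the tripartite line product introduced in the abstract: take an explicit infinite family $(H_n)_{n\ge 1}$ of tripartite spectral expanders together with a carefully chosen constant-sized bipartite unique-neighbor expander $G$, and let $Z_n$ be their tripartite line product. The biregularity parameters of $Z_n$ arise from the parameters of $G$ composed with the degree profile of $H_n$; the aspect-ratio condition $d_1/d_2 \ge \beta/(1-\beta)$ translates to a corresponding imbalance condition among the three sides of $H_n$, with $\beta = 1/2$ corresponding to the balanced case $d_1 = d_2$ and small $\beta$ permitting increasingly imbalanced ratios.

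For the explicit family $(H_n)$, I would take Cayley-graph tripartite spectral expanders over a group $\Gamma_n$ of size $\Omega(|V(Z_n)|)$, built from standard near-Ramanujan constructions with appropriate degrees on each side. Since the tripartite line product respects the symmetries of its first factor, the left-multiplication action of $\Gamma_n$ on $H_n$ lifts to a free and edge-preserving action on $Z_n$, yielding the algebraic property required by the theorem. The small gadget $G$ can be found by brute-force enumeration: since $|V(G)|$ depends only on $d_1, d_2, \beta$, and $\gamma$, exhaustive search over all biregular graphs of that size is explicit in constant time, and the standard probabilistic argument guarantees that some $G$ in this search space has the unique-neighbor expansion profile needed to drive the analysis.

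For the two-sided expansion, take a set $S$ on either side of $Z_n$ with $|S| < \delta |V(Z_n)|$. Edges incident to $S$ in $Z_n$ correspond, through the tripartite line product, to a decorated subgraph $\widetilde{H}$ of $H_n$: each vertex of $S$ selects a small substructure of $G$, and its $Z_n$-neighbors correspond to $H_n$-neighbors routed through $G$. Apply the new structural theorem for small subgraphs of bipartite spectral expanders (the Kahale generalization developed in the paper) to $\widetilde{H}$: this shows that $\widetilde{H}$ is nearly a forest, with an excess of at most $o(|S|)$ edges outside a tree-like core. On the tree-like core, the unique-neighbor expansion of $G$ propagates through the product, contributing $\gamma' \cdot d |S|$ unique-neighbors of $S$ in $Z_n$ for some $\gamma' > 0$; the excess part destroys at most $o(|S|)$ of these, which is absorbed into a slightly smaller final $\gamma$.

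The main obstacle is the structural step for small subgraphs in imbalanced bipartite spectral expanders. Kahale's original decomposition is tight only for regular graphs, and extending it to give sharp, symmetric bounds on \emph{both} sides of a bipartite expander, with the tree-like versus excess dichotomy that the product analysis needs, is the technical heart of the argument. Controlling how the excess is distributed between the two sides of $H_n$ under the imbalance $\beta$ is precisely what makes two-sided (rather than merely one-sided) unique-neighbor expansion in $Z_n$ achievable, and this is where the analysis is most delicate.
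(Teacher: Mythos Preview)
Your high-level framework---take a product of a large Cayley-graph spectral expander with a brute-forced constant-sized gadget, and inherit the group action from the Cayley structure---matches the paper. But two concrete points diverge from what the paper actually does, and one of them is a genuine gap.

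First, for this theorem the paper uses the ordinary \emph{line product} (place a copy of the bipartite gadget $H$ on the $D$ edges incident to each vertex of a single $D$-regular Ramanujan Cayley graph), not the tripartite line product. The tripartite version is reserved for the small-set lossless result, where the algebraic property is \emph{not} claimed. Using the line product here is what makes the algebraic structure clean: the base is a single Cayley graph $\mathrm{Cay}(\Gamma,A)$, and the only subtlety is to pair up generators with their inverses so that each edge of the base lands on a well-defined side of $H$ (this is why the output degrees are $2d_1,2d_2$ and why $d_1,d_2$ are taken even). Your proposal to build a tripartite Cayley base graph is not obviously wrong, but it adds a construction burden the paper avoids.

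Second, and more seriously, your analysis sketch is off. You say the structural theorem shows the relevant subgraph $\widetilde H$ is ``nearly a forest, with an excess of at most $o(|S|)$ edges outside a tree-like core.'' Neither the expander mixing lemma nor the Kahale-type subgraph theorem yields anything close to this: a small set $S$ in a $D$-regular Ramanujan graph can have induced average degree as large as $\Theta(\sqrt{D})$, so the excess is $\Theta(\sqrt{D}\,|S|)$, not $o(|S|)$. The paper's argument does not try to force a tree structure. Instead it (i) uses the expander mixing lemma to bound the \emph{hereditary} average degree of the edge set $T$ (viewed inside the base graph) by $O(\sqrt{D})$; (ii) applies an elementary lemma saying that in any graph with hereditary average degree $\le\gamma$, at least a third of the edge-endpoints lie at vertices of degree $\le 2\gamma$; (iii) on each such low-degree vertex $v$, feeds the at-most-$2\gamma$ edges of $T(v)$ into the gadget $H$, whose expansion profile $P_H(t)\gtrsim d_1 e^{-\theta t/\sqrt{D}}$ (for $t\le\tau\sqrt{D}$) then produces $\Omega(d_1)$ unique-neighbors per edge; and (iv) subtracts the double-counts, which are again controlled by $e(S)\le O(\sqrt{D})|S|$. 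The Kahale generalization is not used at all for this theorem---only the expander mixing lemma and the low-degree lemma. Your ``tree-like core plus $o(|S|)$ excess'' picture would need a much stronger hypothesis than spectral expansion provides, and the argument as you have it would not go through.
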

We refer the reader to \Cref{thm:main-thm} for a formal statement.

\begin{remark}
    \Cref{thm:main} gives the first construction of two-sided unique-neighbor expanders where the left and right side have unequal sizes.\footnote{Throughout the paper we will assume the left side is larger, i.e.\ $d_1 \leq d_2$.}
    It also gives the only construction besides the one-sided lossless expander constructions of \cite{CRVW02,Gol23,Coh} where the number of unique-neighbors of a set $S$ can be made arbitrarily larger than $|S|$.
    We give a detailed comparison to prior work in \Cref{tab:previous-constructions}.
\end{remark}

\begin{remark}
    Our techniques also straightforwardly generalize to constructing families of bounded degree $k$-partite unique-neighbor expanders for any distribution $(\beta_1,\dots,\beta_k)$ of vertices across partitions.
\end{remark}

We also give constructions of two-sided unique-neighbor expanders where we can additionally guarantee that small enough sets expand losslessly, at the expense of the algebraic property.
\begin{theorem}[Two-sided unique-neighbor expanders with small-set lossless expansion; see \Cref{thm:2-sided-un-sse}] 
\torestate{ \label{thm:main-2}
    For every $\beta\in(0,1/2]$ and $\eps > 0$, there are constants $\gamma > 0$ and $K$ such that for all large enough $d_1,d_2$ with $1\ge\frac{d_1}{d_2} \ge \frac{\beta}{1-\beta}$ that are multiples of $K$, there is an explicit infinite family of $(d_1, d_2)$-biregular graphs $(Z_n)_{n\ge 1}$ where:
    \begin{enumerate}
        \item $Z_n$ is a $\gamma$-two-sided unique-neighbor expander,
        \item every $S\subseteq L(Z_n)$ with $|S| \le \exp(\Omega(\sqrt{\log |V(Z_n)|}))$ has $(1-\eps)\cdot d_1\cdot |S|$ unique-neighbors,
        \item every $S\subseteq R(Z_n)$ with $|S| \le \exp(\Omega(\sqrt{\log |V(Z_n)|}))$ has $(1-\eps)\cdot d_2\cdot |S|$ unique-neighbors.
    \end{enumerate}
}
\end{theorem}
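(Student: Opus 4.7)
The plan is to reuse the construction of \Cref{thm:main} as the skeleton, substituting a \emph{lossless} inner gadget for the merely-unique-neighbor gadget used there, and then to extract the extra small-set lossless guarantee by a tangle-freeness argument driven by the sharp Kahale-type characterization that the paper develops for bipartite spectral expanders. Concretely, I would let $Z_n = G_n \Lining H$, where $H$ is a constant-sized $(1-\eps')$-two-sided lossless $(d_1, d_2)$-biregular expander for a suitably chosen $\eps' = \eps'(\eps) > 0$ and constant vertex-count $K = K(\eps, \beta)$ (existence via a union bound on random biregular graphs, then exhaustive search for explicitness since $K$ is a constant), and $G_n$ is a strong tripartite spectral expander on $n$ vertices whose spectral ratio $\lambda/\deg$ is small enough that the induced-subgraph structure theorem for bipartite spectral expanders applies with parameters matching $\eps'$ and the degree of $H$. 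Dropping the requirement of algebraicity on $G_n$ (used in \Cref{thm:main} to obtain the group action) frees us to pick $G_n$ with arbitrarily strong spectral expansion, at the cost of losing the algebraic clause.

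The biregularity and the $\gamma$-two-sided unique-neighbor clause (item~1) then follow verbatim from the analysis of the tripartite line product underlying \Cref{thm:main}: the outer spectral expansion plus the one-sided unique-neighbor expansion of $H$ on both sides yields two-sided UN expansion at the global constant-density scale. What is new is items~2--3, the lossless guarantee on sets of size up to $\exp(\Omega(\sqrt{\log |V(Z_n)|}))$. For this I would fix $S \subseteq L(Z_n)$ of this size and consider its projection $S'$ onto the outer vertex set of $G_n$, so $|S'| \le |S|$. The goal is to show the subgraph of $G_n$ touched by $S'$ is, up to a negligible fraction of ``excess'' edges, a forest: each outer vertex then contributes an independent copy of $H$'s local neighborhood, and $H$'s $(1-\eps')$-losslessness upgrades to $(1-\eps)$-losslessness of $Z_n$ on $S$ after absorbing the forest-deviation.

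To make the forest structure quantitative I would invoke the Kahale-type characterization promised in the abstract: for a bipartite spectral expander with sufficient spectral gap, any vertex set of size $t$ spans at most a $(1+o(1))t$ edges provided $t \le \exp(\Omega(\sqrt{\log n}))$, with excess vertices controlled by a bound that decays with the spectral ratio. The $\sqrt{\log}$ threshold arises naturally here: iterating the spectral bound on the non-backtracking walk gives control on cycles of length up to $\Theta(\sqrt{\log n})$, and these are exactly the cycles that could couple distinct copies of $H$ inside $Z_n$. Once this tangle-free statement is established on both sides of $G_n$, the unique-neighbor count of $S$ in $Z_n$ is at least $(1-\eps')d_1 |S|$ minus $O(\text{excess})$ correction terms, and the excess bound from the structure theorem is $o(|S|)$ in this regime, which can be folded into $\eps$ by adjusting $\eps'$ and the spectral gap of $G_n$.

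The main obstacle is verifying the structure theorem at the exact $\exp(\Omega(\sqrt{\log n}))$ threshold and with enough precision that the excess contribution is truly negligible, not merely $O(|S|)$ with some constant. In particular, one needs that the number of vertices in $S'$ whose local neighborhood in $G_n$ is \emph{not} a tree is at most $\eps'' |S|$ for $\eps'' \ll \eps$, so that replacing those vertices' contribution by zero still leaves $(1-\eps)d_1|S|$ unique-neighbors. This is precisely where the sharp generalization of Kahale must do work beyond the classical bound, which only gives $\Omega(d)$ rather than $(1-o(1))d$ expansion for small sets. Apart from this, the argument is a careful bookkeeping exercise once the tangle-freeness input is in hand, and the symmetry of the tripartite line product means items~2 and 3 are handled by the same argument with the roles of $L$ and $R$ swapped.
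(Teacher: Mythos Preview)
Your overall architecture is right---tripartite line product, constant-sized gadget found by brute force, and the Kahale-type subgraph density theorem driving the linear-scale two-sided unique-neighbor expansion---but there is a genuine gap in how you obtain the small-set lossless guarantee and the $\exp(\Omega(\sqrt{\log n}))$ threshold.

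You propose to extract the near-forest structure on small sets from the Kahale-type characterization alone, writing that ``iterating the spectral bound on the non-backtracking walk gives control on cycles of length up to $\Theta(\sqrt{\log n})$.'' This is false: spectral expansion, even exactly Ramanujan, implies \emph{no} girth or bicycle-freeness bound whatsoever---there are near-Ramanujan graphs with constant-length cycles and bicycles. The Kahale-type density bound $(d_1-1)(d_2-1)\lesssim\sqrt{(c-1)(d-1)}$ holds uniformly for sets up to a constant fraction of the vertices; it does not single out any sub-polynomial scale and does not produce a tree-like structure. The $\sqrt{\log n}$ in the statement is not an artifact of the spectral analysis; it is inherited from a \emph{separate} structural property of the base graph that must be built in from the start.

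Concretely, the paper chooses the bipartite base graphs $G_n^{(1)},G_n^{(2)}$ from the near-Ramanujan family of \cite{MOP20,OW20}, which are explicitly constructed to have no bicycle of size $o(\sqrt{\log n})$. This is precisely why the algebraic clause is dropped: not to gain spectral gap (Cayley Ramanujan graphs already have optimal gap) but because the known bicycle-free constructions do not carry a group action. The small-set lossless step then runs as follows. If $S\subseteq L$ has $|N_{G^{(1)}}(S)|<(1-\eps)K|S|$, one shows via \Cref{lem:bipartite-avg-degree} that the non-backtracking spectral radius of the induced subgraph on $S\cup N(S)$ is at least $\sqrt{\eps(K-1)}$; the paper's generalized Moore bound (\Cref{thm:moore-bound-main}) then forces a bicycle of size $O(\log_{\sqrt{\eps(K-1)}}|S|)$ inside $G^{(1)}$. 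Bicycle-freeness up to size $g_n$ therefore yields lossless expansion of $S$ into $M$ whenever $|S|\le\exp(\Omega(g_n))$. The Kahale-type density theorem is invoked only in the second hop, on $G^{(2)}[U\cup T]$, to show that the expansion from the gadgets is not destroyed by collisions in $R$. So your plan is missing two genuinely new ingredients: the bicycle-free base graph as an input, and the generalized Moore bound that converts bicycle-freeness into lossless expansion.
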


    At a high level, all of our constructions involve taking a certain product of a large ``base graph'' with a constant-sized ``gadget graph''.
    In \Cref{thm:main,thm:main-2}, the unique-neighbor expansion comes from strong spectral expansion properties of the base graph; see \Cref{sec:technical-overview} for an overview.
    The algebraic property is also inherited from the base graph satisfying the same algebraic property.
    These two properties can be simultaneously achieved by choosing the base graph as Ramanujan Cayley graphs \cite{LPS88,Mar88,Mor94}.

\begin{remark}[Bicycle-free Ramanujan graph construction]
    In \Cref{thm:main-2}, the small-set lossless expansion property comes from the base graph consisting of bipartite spectral expanders with no short bicycles (\Cref{def:excess}): no bicycles of length-$g$ roughly translates to lossless expansion for sets of size $\exp(g)$; see \Cref{thm:2-sided-un-sse} for a formal statement.
    We believe that the biregular Ramanujan graph construction of \cite{BFGRKMW15} should have no bicycles of length-$\Omega(\log n)$ and also endow a group action, but we do not prove it in this work.
    We instead use constructions from the works of \cite{MOP20,OW20}, which have no bicycles of length $\Omega(\sqrt{\log n})$ but no group action.
\end{remark}

\begin{remark}[One-sided lossless expanders] \label{rem:recover-lossless}
    As explained in more detail in the technical overview (\Cref{sec:technical-overview}), our graph product generalizes the \emph{routed product} defined in \cite{AC02,AD23,Gol23}.
    In particular, by instantiating the product with slightly different parameters, we are able to prove one-sided lossless expansion with essentially the same proof as \Cref{thm:main-2}, recovering the result of Golowich~\cite{Gol23}.
    The analysis is carried out in \Cref{sec:one-sided-lossless}.
\end{remark}

\parhead{New results in spectral graph theory.}
In service of proving \Cref{thm:main-2}, we prove two results that we believe to be independently interesting in spectral graph theory:
\begin{enumerate}[(i)]
    \item we give a sharp characterization of what subgraphs can arise in bipartite spectral expanders, generalizing results of Kahale \cite{Kah95} and Asherov \& Dinur \cite{AD23},
    \item we give a refinement to the well-known \emph{irregular Moore bound} of \cite{AHL02} on the tradeoff between girth and edge density in a graph.
\end{enumerate}

In particular, we show that for any small induced subgraph of a near-Ramanujan biregular graph, the spectral radius of its \emph{non-backtracking matrix} (see \Cref{sec:nb-matrix}) must be bounded.

\begin{theorem}[See \Cref{thm:biregular-subgraph}]
\label{thm:subgraph-spectral-radius-main}
    Let $\eps \in (0,0.1)$, and let $3 \leq c \leq d$ be integers.
    Let $G = (L \cup R, E)$ be a $(c,d)$-biregular graph and $S \subseteq L \cup R$ such that $|S| \leq d^{-1/\eps} |L\cup R|$.
    Then,
    \begin{equation*}
        \rho(B_{G[S]}) \leq \frac{1}{2} \Paren{\sqrt{\wt{\lambda}^2 - (\sqrt{c-1}+\sqrt{d-1})^2} + \sqrt{\wt{\lambda}^2 - (\sqrt{c-1}-\sqrt{d-1})^2} } \mcom
    \end{equation*}
    where $\wt{\lambda} = \max(\lambda_2(A_G), \sqrt{c-1}+\sqrt{d-1}) \cdot (1+O(\eps))$.
\end{theorem}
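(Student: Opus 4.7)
The strategy is to adapt Kahale's argument bounding the non-backtracking spectrum of subgraphs of spectral expanders to the biregular setting, with the main new ingredient being the biregular analogue of the Ihara--Bass identity. Algebraically, a $(c,d)$-biregular graph has non-backtracking eigenvalues $\mu$ and adjacency eigenvalues $\lambda$ related by $(\mu^{2}+c-1)(\mu^{2}+d-1) = \mu^{2}\lambda^{2}$; solving this quartic in $\mu^{2}$ and using the identity $(c+d-2)^{2}-4(c-1)(d-1)=(c-d)^{2}$ shows that the largest $\mu$ corresponding to $\lambda=\wt{\lambda}$ is exactly the bound claimed in the statement. It therefore suffices to establish the inequality version $(\rho^{2}+c-1)(\rho^{2}+d-1)\le \rho^{2}\wt{\lambda}^{2}$ for $\rho := \rho(B_{G[S]})$.

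The plan for this inequality is to build a Rayleigh-quotient witness out of the Perron--Frobenius eigenvector $v$ of $B_{G[S]}$, viewed as a non-negative function on directed edges of $G[S]$ and extended by zero to all directed edges of $G$. Because $B$ swaps left-to-right and right-to-left edges, I would work with $B^{2}$ on each direction separately and aggregate $v$ at vertices to obtain non-negative test functions $f_{L}$ on $L$ and $f_{R}$ on $R$, with $f_{L}(u)=\sum_{e=(w,u),\,w\in R}v_{e}$ and analogously for $f_{R}$. The relation $Bv=\rho v$, combined with the biregular degree constraints, translates into coupled quadratic inequalities relating $\|Mf_{L}\|_{2}^{2}$ and $\|M^{T}f_{R}\|_{2}^{2}$ to $\|f_{L}\|_{2}^{2}, \|f_{R}\|_{2}^{2}$ and $\rho$, where $M$ is the bipartite adjacency; iterating these $k$ times produces the Ihara--Bass product $(\rho^{2}+c-1)(\rho^{2}+d-1)$ in the limit, with the factor $c-1$ accounting for the non-backtracking defect at left vertices and $d-1$ at right vertices.

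The upper bound comes from spectral expansion applied to $(MM^{T})^{k}$ on $f_{L}$. Decomposing $f_{L}=\alpha\,\mathbf{1}_{L}/\sqrt{|L|}+f_{L}^{\perp}$ with $f_{L}^{\perp}$ orthogonal to the top eigenvector of $MM^{T}$, one has $\langle f_{L}, (MM^{T})^{k} f_{L}\rangle \le \alpha^{2}(cd)^{k}+\lambda_{2}(A_G)^{2k}\|f_{L}^{\perp}\|_{2}^{2}$. Since $f_{L}$ is supported on $L\cap S$, Cauchy--Schwarz gives $\alpha^{2}\le |S|\|f_{L}\|_{2}^{2}/|L|$, so the trivial contribution is damped by $|S|/|L|\le d^{-1/\eps}$. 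Choosing $k\approx 1/\eps$ ensures this trivial ``Ramanujan-violation'' term is dominated by $\lambda_{2}(A_G)^{2k}\|f_{L}^{\perp}\|_{2}^{2}$ after absorbing the slack into the $(1+O(\eps))$ inflation of $\lambda_{2}(A_G)$ up to $\wt{\lambda}$. Combining upper and lower bounds and taking $2k$-th roots then yields the polynomial inequality $(\rho^{2}+c-1)(\rho^{2}+d-1)\le \rho^{2}\wt{\lambda}^{2}$.

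The main obstacle I anticipate is the asymmetric biregular bookkeeping: in Kahale's original argument there is a single test function on a regular graph, whereas here I must track $f_{L}$ and $f_{R}$ simultaneously since $M$ and $M^{T}$ map one to the other, and the final bound is a tight product $(\rho^{2}+c-1)(\rho^{2}+d-1)$ rather than a single quadratic of the form $\rho^{2}-\lambda\rho+(d-1)$. Obtaining the sharp interpolation between the two Ramanujan endpoints $\sqrt{c-1}\pm\sqrt{d-1}$, visible in the statement as the two separate $\sqrt{\wt{\lambda}^{2}-(\cdot)^{2}}$ terms, requires the coupling between $f_{L}$ and $f_{R}$ in the lower-bound step to be tight on both sides. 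A secondary point is verifying that the size condition $|S|\le d^{-1/\eps}|L\cup R|$ is exactly what is needed for the top-eigenvector leakage to be absorbed into $(1+O(\eps))$, which reduces to balancing $k$ against $\eps$ in the spectral upper-bound estimate.
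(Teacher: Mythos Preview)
Your algebraic reduction to $(\rho^{2}+c-1)(\rho^{2}+d-1)\le\rho^{2}\wt{\lambda}^{2}$ is correct, and the upper-bound mechanism---spectral decomposition of powers of $MM^{T}$, with the trivial-eigenvector leakage controlled by $|S|\le d^{-1/\eps}|V|$ against $k\approx 1/\eps$ iterations---is essentially what the paper uses as well. But the paper takes a different route, working with the Bethe Hessian $H_{G[S]}(t)$ and an arbitrary test function $f:S\to\R$ rather than the Perron eigenvector of $B_{G[S]}$.

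The gap in your plan is the lower-bound iteration, at precisely the place you anticipate. After one application of $M^{T}$ to $f_{L}$, the result is no longer supported on $S$, and the Perron relation $B_{G[S]}v=\rho v$ gives no control over the new mass; there is no evident way to iterate your coupled inequalities, because $(MM^{T})^{k}$ counts backtracking walks in the full graph $G$, not non-backtracking walks in $G[S]$. The paper's device for bridging this is a \emph{regular tree extension}: attach trees to the vertices of $S$ so the result is $(c,d)$-biregular out to depth $\ell\approx 1/(2\eps)$, and extend $f$ to $f_{t}$ on the tree by $f_{t}(x)=f(r)\,t^{\mathrm{depth}(x)}$. The key identity is that this extension preserves the Bethe--Hessian quadratic form exactly, $\langle f,H_{G[S]}(t)f\rangle=\langle f_{t},H_{T}(t)f_{t}\rangle$, so the degree deficit of $G[S]$ relative to $G$ is absorbed once rather than compounding at each iterate. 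Folding the tree back into $G$ via the natural homomorphism gives a non-negative $\wt{f}_{t}$ supported on the $\ell$-neighborhood of $S$ with $\langle f_{t},A_{T}f_{t}\rangle\le\langle\wt{f}_{t},A_{G}\wt{f}_{t}\rangle$, after which your spectral upper bound applies directly. The biregular coupling you flag then falls out automatically: writing $\gamma_{1}=(c-1)t^{2}+1$ and $\gamma_{2}=(d-1)t^{2}+1$, positivity of $\langle f,H_{G[S]}(t)f\rangle$ reduces to $t^{2}\wt{\lambda}^{2}<\gamma_{1}\gamma_{2}$, which is exactly your target at $t=1/\rho$. So the missing idea is to propagate $f$ spatially via the tree extension with the specific decay $t^{\mathrm{depth}}$, rather than temporally via powers of $M$.
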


\begin{remark} \label{rem:spectral-radius-intuition}
    For the sake of intuition, we inspect what \Cref{thm:subgraph-spectral-radius-main} tells us in the special case where $G$ is a biregular near-Ramanujan graph.
    When we plug in $\wt{\lambda} = \parens*{\sqrt{c-1} + \sqrt{d-1}}\cdot (1+\eps)$, we obtain:
    \[
        \rho(B_{G[S]}) \le \parens*{ (c-1) (d-1) }^{1/4} + \delta(\eps)
    \]
    where $\delta(\eps)\to 0$ as $\eps \to 0$.
\end{remark}

When $c = d$, Kahale's result for $d$-regular graphs (e.g., Theorem 3 of \cite{Kah95}) also has the form $\frac{1}{2} \parens{\wt{\lambda} + \sqrt{\wt{\lambda}^2 - 4(d-1)}}$.
The above expression thus generalizes Kahale's result to biregular graphs.

\begin{remark}[Sharpness of \Cref{thm:subgraph-spectral-radius-main}]
    One can adapt the techniques of \cite{MM21} to prove that for \emph{any} graph $H$ on $o(n)$ vertices where
    \[
        \rho(B_H) \le \frac{1}{2}\parens*{ \sqrt{\lambda^2 - (\sqrt{c-1}+\sqrt{d-1})^2} + \sqrt{\lambda^2 - (\sqrt{c-1}-\sqrt{d-1})^2} } \mcom
    \]
    there is a graph $G$ that contains $H$ as a subgraph, and $\lambda_2(A_G) \le \lambda\cdot(1+o(1))$.
\end{remark}

As a consequence, we obtain the following result which answers a question raised by \cite{AD23}.

\begin{theorem}[Subgraph density in (near-)Ramanujan graphs; see \Cref{thm:expansion}]
\label{thm:subgraph-density-main}
    Let $3 \leq c \leq d$ be integers, and let $G = (L \cup R, E)$ be a $(c,d)$-biregular graph such that $\lambda_2(G) \leq (\sqrt{c-1} + \sqrt{d-1})(1 + \gamma/d)$.
    Then, there exists $\delta = \delta(\eps,c,d) > 0$ such that for every $S_1 \subseteq L$ and $S_2 \subseteq R$ with $|S_1| + |S_2| \leq \delta |L \cup R|$,
    the left and right average degrees $d_1 = \frac{|E(S_1,S_2)|}{|S_1|}$ and $d_2 = \frac{|E(S_1,S_2)|}{|S_2|}$ in the induced subgraph $G[S_1 \cup S_2]$ must satisfy
    \begin{equation*}
        (d_1-1) (d_2-1) \leq \sqrt{(c-1)(d-1)} \cdot (1+O(\eps+\sqrt{\gamma})) \mper
    \end{equation*}
\end{theorem}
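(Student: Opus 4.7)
The plan is to sandwich the non-backtracking spectral radius $\rho(B_H)$ of the induced subgraph $H := G[S_1 \cup S_2]$ between the upper bound coming from \Cref{thm:subgraph-spectral-radius-main} and a lower bound of the form $\rho(B_H)\ge\sqrt{(d_1-1)(d_2-1)}$, which is a bipartite analog of the irregular Moore bound of Alon--Hoory--Linial. The desired inequality then falls out of a direct comparison of the two bounds.

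For the upper bound, I would apply \Cref{thm:subgraph-spectral-radius-main} to $H$, taking its $\varepsilon$ equal to the one here. The hypothesis $\lambda_2(G) \le (\sqrt{c-1}+\sqrt{d-1})(1+\gamma/d)$ then forces $\wt\lambda = (\sqrt{c-1}+\sqrt{d-1})(1+O(\varepsilon+\gamma/d))$. Writing $\sigma := \sqrt{c-1}+\sqrt{d-1}$ and $\tau := \sqrt{c-1}-\sqrt{d-1}$, straightforward expansion gives $\wt\lambda^2-\sigma^2 = O(\sigma^2(\varepsilon+\gamma/d))$ and $\wt\lambda^2-\tau^2 = 4\sqrt{(c-1)(d-1)}+O(\sigma^2(\varepsilon+\gamma/d))$. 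Squaring the resulting bound from \Cref{thm:subgraph-spectral-radius-main} and tracking the perturbation to first order yields
\[
\rho(B_H)^2 \;\le\; \sqrt{(c-1)(d-1)}\cdot \bigl(1+O(\varepsilon+\sqrt\gamma)\bigr).
\]

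For the lower bound, I would prove $\rho(B_H)\ge\sqrt{(d_1-1)(d_2-1)}$ by a Rayleigh-quotient / walk-counting argument. Because $H$ is bipartite, the non-backtracking matrix $B_H$ swaps the two orientation classes $L\to R$ and $R\to L$, so $B_H^2$ splits into two isospectral blocks, each acting on the oriented edges of a single class. It suffices to lower-bound the Rayleigh quotient of one such block against the all-ones vector on oriented edges; the corresponding numerator counts non-backtracking walks of length $2k+1$ starting with an $L\to R$ edge, whose count factors as a product of terms $(\deg(v_i)-1)$ and $(\deg(u_i)-1)$ along the walk. An iterated Jensen / Cauchy--Schwarz application against the uniform distribution on oriented edges replaces these products by the averages $(d_2-1)$ and $(d_1-1)$, respectively. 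Taking $k$-th roots and invoking Gelfand's formula on the non-negative matrix $B_H^2$ yields the desired bound; this is consistent with the biregular equality $\rho(B)=\sqrt{(d_1-1)(d_2-1)}$.

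Combining the two estimates gives $(d_1-1)(d_2-1) \le \rho(B_H)^2 \le \sqrt{(c-1)(d-1)}(1+O(\varepsilon+\sqrt\gamma))$, which is the claimed inequality; the constant-fraction hypothesis on $|S_1|+|S_2|$ is used only to remain in the regime where \Cref{thm:subgraph-spectral-radius-main} applies. The main obstacle is the bipartite Moore-type lower bound: while the regular version $\rho(B)\ge\sqrt{\bar d-1}$ is classical, the two-sided version must carefully synchronize the alternation of $L$- and $R$-side factors along non-backtracking walks, and I expect the bulk of the technical work to live there; this is consistent with the authors' own billing of a refinement of the irregular Moore bound as one of their independent spectral-graph-theory contributions.
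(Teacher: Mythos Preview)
Your overall strategy is correct and matches the paper's: combine the upper bound from \Cref{thm:subgraph-spectral-radius-main} with a lower bound of the form $\rho(B_H)^2 \ge (d_1-1)(d_2-1)$. The difference lies entirely in how that lower bound is obtained. The paper does not go through walk counting on $B_H$; instead it stays with the Bethe Hessian and proves directly (\Cref{lem:bipartite-avg-degree}) that $H_{G[S]}(t)\succeq 0$ forces $(d_1-1)(d_2-1)\le 1/t^2$, by evaluating $x^\top H_{G[S]}(t)x$ on the two-valued test vector $x=\mathbf 1_{S_1}+\alpha\,\mathbf 1_{S_2}$ and optimizing over $\alpha$. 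Via Ihara--Bass this is equivalent to your inequality $\rho(B_H)\ge\sqrt{(d_1-1)(d_2-1)}$, but it is a half-page computation rather than an iterated-Jensen argument on non-backtracking walks. So the step you flag as ``the main obstacle'' is in fact the easy part.

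Two smaller points. First, you conflate this lower bound with the paper's ``refinement of the irregular Moore bound'' (\Cref{thm:moore-bound-main}); that theorem is a different statement (short cycles/bicycles from large $\rho$) and is not used here. Second, applying \Cref{thm:subgraph-spectral-radius-main} with its parameter equal to your $\eps$ yields a multiplicative error of order $\sqrt{\eps}+\sqrt{\gamma}$, not $\eps+\sqrt{\gamma}$: the $(1+O(\eps))$ blow-up on $\wt\lambda$ sits at the wrong scale relative to $\gamma/d$. The paper handles this by invoking \Cref{thm:biregular-subgraph} with parameter $\eps^2/d$ (hence $\delta=d^{-d/\eps^2}$), so that $\wt\lambda=(\sqrt{c-1}+\sqrt{d-1})(1+(\gamma+O(\eps^2))/d)$ and \Cref{cor:t-condition-simplified} then gives the stated $1+O(\eps+\sqrt{\gamma})$.
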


Our refinement to the Moore bound involves using the spectral radius of the non-backtracking matrix of the graph instead of the degree, and yields the existence of \emph{bicycles} --- pairs of short cycles that are close in the graph.
\begin{theorem}[Generalized Moore bound; see \Cref{thm:moore-bound}] \label{thm:moore-bound-main}
    Let $G$ be a graph on $n$ vertices and let $\rho = \lambda_1(B_G)$ where $B_G$ is the non-backtracking matrix of $G$.
    Assuming $\rho > 1$, $G$ must contain a cycle of length at most $(2+o_n(1))\log_{\rho} n$ and $G$ must contain a bicycle of length at most $(3+o_n(1))\log_{\rho} n$.
\end{theorem}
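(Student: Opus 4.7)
The plan is to adapt the counting argument behind Alon–Hoory–Linial's Moore bound \cite{AHL02}, using the non-backtracking spectral radius $\rho$ in place of the vertex branching factor. The key input is a Perron–Frobenius-type walk-count lower bound: since $B_G$ is entrywise non-negative, every induced operator norm dominates its spectral radius, so
\[
    \max_{e} \, (B_G^k \mathbf{1})_e \;=\; \|B_G^k\|_{\infty} \;\geq\; \rho(B_G^k) \;=\; \rho^k.
\]
Hence there is a directed edge $e^\star = (v^\star, u^\star)$ from which at least $\rho^k$ non-backtracking walks of length $k+1$ emanate.

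For the cycle bound, I would take $k = \lceil \log_\rho n \rceil$ so that $\rho^k \geq n$, and argue by contradiction that the girth cannot exceed $2(k+1)$. Indeed, if it did, then the ball of radius $k+1$ around $v^\star$ would be a tree, forcing the $\geq \rho^k \geq n$ non-backtracking walks of length $k+1$ from $v^\star$ to end at pairwise distinct vertices, which is impossible in an $n$-vertex graph. Hence $G$ contains a cycle of length at most $2(k+1) \leq 2\log_\rho n + O(1) = (2 + o_n(1)) \log_\rho n$.

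For the bicycle bound, I would take $k$ minimal with $\rho^k > 2n$ and assume $G$ contains no bicycle of length $\leq 3k$. The at least $\rho^k > 2n$ non-backtracking walks of length $k+1$ starting with $e^\star$ must terminate at one of the $n$ vertices of $G$, so by pigeonhole some vertex $w$ is the endpoint of at least three such walks. Deleting the shared first edge $e^\star$ yields three distinct non-backtracking walks $W_1, W_2, W_3$ from $u^\star$ to $w$, each of length $k$, whose union $H$ has at most $3k$ edges. The three pairwise edge-symmetric differences $W_i \oplus W_j$ lie in the cycle space of $H$ and must be pairwise distinct (otherwise two of the $W_i$ would coincide), so together they span a subspace of dimension at least $2$. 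Hence $H$ has first Betti number $\geq 2$, making it a bicycle of length $\leq 3k = 3\log_\rho n + O(1) = (3+o_n(1))\log_\rho n$ in $G$, contradicting the assumption.

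The main obstacle is the combinatorial observation used in the bicycle step—three distinct non-backtracking walks between a fixed pair of vertices necessarily give a subgraph of first Betti number at least $2$—which is exactly what turns the exponent from $2$ (one cycle, two walks) into $3$ (one bicycle, three walks). The remaining ingredients, namely the Perron–Frobenius inequality $\|B_G^k\|_\infty \geq \rho^k$ and the tree-ball counting used in the cycle bound, are classical.
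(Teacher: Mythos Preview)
Your route is sound and more elementary than the paper's. Where you use the single inequality $\|B_G^k\|_\infty \ge \rho^k$ and pigeonhole on endpoints, the paper argues via the generating function $J(t)=\sum_{s\ge0}A^{(s)}t^s=(1-t^2)H_G(t)^{-1}$: it shows $\tr J(t)\to\infty$ as $t\uparrow 1/\rho$, then bounds each $\tr(A^{(s)})$ by $\sqrt{n}\,\|A^{(k)}\|_2^{\lfloor s/k\rfloor}\,\|A^{(r)}\|_F$ and observes that the entrywise bound $A^{(k)}_{uv}\le 1$ (resp.\ $\le 2$) under the no-short-cycle (resp.\ no-short-bicycle) hypothesis forces $\|A^{(k)}\|_2\le n$ (resp.\ $\le 2n$), making the series converge at $t=1/\rho$ --- a contradiction. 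Your argument reaches the same thresholds $k\approx\log_\rho n$ and $k\approx\log_\rho 2n$ without the Ihara--Bass machinery.

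Both proofs, however, rest on the same combinatorial lemma: three distinct length-$k$ non-backtracking walks between a fixed pair of vertices force a bicycle on their union. Your symmetric-difference proof of this has a genuine gap. You implicitly treat each $W_i$ as an element of $\mathbb{F}_2^{E(H)}$ and deduce ``the $W_i\oplus W_j$ are pairwise distinct, otherwise two of the $W_i$ would coincide.'' But a non-backtracking walk can traverse an edge more than once, and two distinct non-backtracking walks can have the same mod-$2$ edge indicator; already on a single triangle the two oriented closed walks of length $3$ are distinct walks yet equal in $\mathbb{F}_2^{E}$. So the inference from ``distinct as walks'' to ``distinct in $\mathbb{F}_2^{E}$'' fails, and with it the dimension-$2$ conclusion. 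The paper fills this by arguing the contrapositive structurally: in a connected graph with at most one cycle $C$, a non-backtracking $u$--$w$ walk must follow the unique tree path into $C$, traverse $C$ in one of two orientations, and exit along the unique tree path to $w$; for a prescribed length at most one winding number per orientation is compatible, yielding at most two walks. Replacing your symmetric-difference step with this unicyclic analysis (or an equivalent universal-cover count) completes your argument.
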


\begin{remark} \label{rem:generalized-moore-bound}
    For a graph $G$ with average degree $d$, $\rho(B_G)$ is at least $d-1$.
    Therefore, \Cref{thm:moore-bound-main} is stronger than the girth guarantee of $2\log_{d-1} n$ from the classical irregular Moore bound of \cite{AHL02} in some cases, in particular for some graphs arising in the proof of \Cref{thm:main-2}.
    A simple example where this yields tighter bounds is a $(d,2)$-biregular graph.
    When $d\gg 2$, the average degree is $\approx 4$ and the classical Moore bound yield a cycle of length $2\log_3 n$.
    Nevertheless, the generalized Moore bound tells us that there is a cycle of length $\approx 4\log_{d-1} n$.
\end{remark}

\subsection{Context and related work}
\parhead{Spectral expansion vs.\ unique-neighbor expansion.}
In contrast to unique-neighbor expanders, we have a rich set of spectral and edge expander constructions.
A key conceptual difficulty in unique-neighbor expansion is the lack of an ``analytic handle'' for it.
Several other graph properties required in applications of expander graphs, such as high conductance on cuts, low density of small subgraphs, and rapid mixing of random walks, have an excellent surrogate in the second eigenvalue of the normalized adjacency matrix, which is a highly tractable quantity.

It is natural to wonder if any form of unique-neighbor expansion can be deduced from the eigenvalues of a graph.
However, the connection between the unique-neighbor expansion in a graph and its spectral properties is tenuous at best.
Kahale \cite{Kah95} proved that in $d$-regular graphs with optimal spectral expansion, small sets have vertex expansion at least $d/2$, i.e., for a sufficiently small constant $\eps$, any set $S$ with at most $\eps n$ vertices has roughly at least $|S|\cdot d/2$ distinct neighbors.
Observe that once the vertex expansion of a set exceeds $d/2$, it begins to be forced to have unique-neighbors.
Strikingly, Kahale also showed the $d/2$ bound is tight for spectral expanders, which makes them fall short at the cusp of the unique-neighbor expansion threshold; indeed, it was proved in \cite{KK22} that certain algebraic bipartite Ramanujan graphs contain sublinear-sized sets with \emph{zero} unique-neighbors (see also \cite{Kah95,MM21} for examples of near-Ramanujan graphs exhibiting a similar property).

\parhead{Quantum codes.}
Resilience to errors is essential for constructing quantum computers \cite{Kit03}, which makes quantum error correction fundamental for quantum computing.
One approach to this problem is in the form of \emph{quantum LDPC codes} (qLDPC codes).
Recently, a flurry of work culminated in the construction of qLDPC codes with constant rate and distance \cite{PK22,LZ22}, which was also coupled with the construction of $c^3$-\emph{locally testable codes}~\cite{DEL+22,PK22}.
At a high level, these codes are constructed by composing a structured spectral expander, a \emph{square Cayley complex}, along with a structured inner code, a \emph{robustly testable tensor code}.
The analysis is complicated by the stringent requirements on the inner code, and poses a barrier for generalizing these constructions.
Indeed, it is unclear how to generalize the square Cayley complex and the inner code to construct \emph{quantum locally-testable codes} (qLTCs).

More recently, Lin and Hsieh~\cite{LH22b} constructed good qLDPC codes with \emph{linear time decoders} assuming the existence of two-sided algebraic lossless expander graphs.
Their construction does not require an inner code, and as a byproduct, yields a simpler analysis and is plausibly easier to generalize to other applications such as qLTCs.\footnote{More recently qLDPCs with linear time decoders have been constructed~\cite{DHLV22,GPT22,LZ23}, but they still make use of an inner code.}
However, two-sided algebraic lossless expander graphs are not known to exist, and obtaining them is one of the primary motivations for the goals of this paper.

The \emph{chain complexes} arising in the qLDPC constructions have also been fruitful for other problems in theoretical computer science --- constructing explicit integrality gaps for the Sum-of-Squares semidefinite programming hierarchy for the $k$-XOR problem \cite{HL22} \& the resolution of the quantum NLTS conjecture \cite{ABN23}.

The previously known integrality gaps for $k$-XOR came from \emph{random} instances \cite{Gri01,Sch08}.
Building on the work of Dinur, Filmus, Harsha \& Tulsiani \cite{DFHT21}, Hopkins \& Lin \cite{HL22} constructed \emph{explicit} families of 3-XOR instances that are hard for the Sum-of-Squares (SoS) hierarchy of semidefinite programming relaxations (previously known lower bounds are \emph{random} instances).
Specifically, they illustrated $k$-XOR instances which are highly unsatisfiable but even $\Omega(n)$ levels of SoS fail to refute them (i.e., perfect completeness).


\parhead{Previous constructions.}
The first constructions of unique-neighbor expanders appeared in the work of Alon and Capalbo \cite{AC02}.
One of their constructions, which we extend in this paper, takes the line product of a large Ramanujan graph with the $8$-vertex $3$-regular graph obtained by the union of the octagon and edges connecting diametrically opposite vertices.

Another construction in the same work gives one-sided unique-neighbor expanders of aspect ratio $22/21$, and was extended in a recent work of Asherov and Dinur \cite{AD23} to obtain one-sided unique-neighbor expanders of aspect ratio $\alpha$ for all $\alpha \ge 1$ where every small set on the left side has at least $1$ unique-neighbor.
The construction takes a graph product called the \emph{routed product} of a large biregular Ramanujan graph with a constant-sized random graph.
(See the recent work of Kopparty, Ron-Zewi \& Saraf \cite{kopparty2023simple} for a simplified analysis with weaker ingredients.)


The work of Capalbo, Reingold, Vadhan \& Wigderson \cite{CRVW02} constructs one-sided lossless expanders of arbitrarily large degree and arbitrary aspect ratio.
Their construction relies on a generalization of the \emph{zig-zag product} of \cite{RVW00} applied to various randomness conductors to construct lossless conductors, analyzed by tracking entropy, which then translates to lossless expanders.
More recently, a simpler construction and analysis was given by Golowich \cite{Gol23} based on the routed product (see also \cite{Coh} for a similar construction, and see \Cref{rem:nonexpanding} for a discussion on where the routed product constructions fall short of achieving two-sided expansion).

Finally, motivated by randomness extractors, the works \cite{TUZ07,GUV09} construct one-sided lossless expanders where the left side is polynomially larger than the right.

\newcommand{\specialcell}[2][c]{%
  \begin{tabular}[#1]{@{}c@{}}#2\end{tabular}}
\newcommand{\dingcheck}{{\color{Emerald}\small \ding{51}}}
\newcommand{\dingboldcheck}{{\small \ding{52}}}
\newcommand{\dingx}{{\color{Mahogany} \small \ding{55}}}
\newcommand{\dingboldx}{{\small \ding{56}}}

\begin{table}[ht]
\caption{Comparison of our \Cref{thm:main} with prior work.}
{\centering
\begin{tabular}{lccccc}
\hline
\label{tab:previous-constructions}
 {\textbf{Construction?}} &
 { \textbf{Which \emph{d}?}${}^{\dagger}$} &
 \specialcell{{\textbf{\# unique-neighbors}} \\ { \textbf{of $S$} }}  &
 { \textbf{2-sided?}} &
 \specialcell{{ \textbf{Explicit?}}} &
  \specialcell{{ \textbf{Aspect} } \\ {\textbf{ratio} ${}^{\ddagger}$} }
 \\ \hhline{======}
 { Random graphs}  & {any $d$}           & {$(d-\eps)\cdot |S|$}                       & \dingcheck            & \dingx     & {any}     \\
 { \cite{AC02}}    & {$\{3,4,6\}$}       & {$\Omega(|S|)$}                  & \dingcheck${}^{\ast}$                  & \dingcheck & 1${}^{\ast}$ \\
 { \cite{AC02}}    & {$\subseteq [25]$}          & {$\Omega(|S|)$}                  & \dingx                & \dingcheck & { $22/21$ } \\
 $\begin{array}{c}\text{\cite{CRVW02}} \\ \text{\cite{Gol23,Coh}}\end{array}$  & {large enough $d$}  & {$(d-o(d))\cdot|S|$}          & \dingx                & \dingcheck & {any} \\
 { \cite{Bec16}}   & {$6$}               & {$\Omega(|S|)$ }                 & \dingcheck${}^{\ast}$                  & \dingcheck & 1$^{\ast}$ \\
 { \cite{AD23,kopparty2023simple}} & {large enough $d$}  & {at least $1$}                      & \dingx                & \dingcheck & {any}   \\  \hline
 { \textbf{this paper}} & {large enough $d$}  & {$\Omega(d|S|)$}       & \dingcheck            & \dingcheck & {any} \\ \hline
\end{tabular}}
\vspace{8pt}

${}^{\ast}$Non-bipartite construction that can be made bipartite by passing to the double cover.

${}^{\dagger}$$d$ here refers to the degree of the left vertex set.

${}^{\ddagger}$``Aspect ratio'' refers to the ratio between the sizes of the left and right vertex sets.
\end{table}

\parhead{Applications of unique-neighbor expanders.}
Unique-neighbor expanders have several applications in theoretical computer science.
In coding theory, it was shown in \cite{DSW06,BV09} that unique-neighbor expander codes~\cite{Tan81} are ``weakly smooth'', hence when tensored with a code with constant relative distance, they give \emph{robustly testable codes}.
In high-dimensional geometry, unique-neighbor expanders were used in~\cite{GLR10,Kar11} to construct \emph{$\ell_p$-spread} subspaces as well as in~\cite{BGI+08,GMM22} to construct matrices with the \emph{$\ell_p$-restricted isometry property} (RIP).

Unique-neighbor expanders were also used in designing \emph{non-blocking networks}~\cite{ALM96}: given a set of input and output terminals, the network graph is connected such that no matter which input-output pairs are connected previously, there is a path between any unused input-output pair using unused vertices.


\subsection{Technical overview}
\label{sec:technical-overview}

\parhead{Line product.}
Our construction of two-sided algebraic unique-neighbor expanders, featured in \Cref{thm:main}, is based on the \emph{line product} between a large \emph{base graph} and a small \emph{gadget graph}.
Let $G$ be a $D$-regular graph on $n$ vertices and $H$ be a $d$-regular graph on $D$ vertices. The line product $G \Lining H$ is a graph on the \emph{edges} of $G$ where for each vertex $v\in G$ we place a copy of $H$ on the set of edges incident to $v$.
See \Cref{def:line-product} for a formal definition and \Cref{fig:line-example} for an example.
This graph product was also used in the works of \cite{AC02,Bec16}.

\begin{figure}[ht!]
    \centering
    \includegraphics[width=0.8\textwidth]{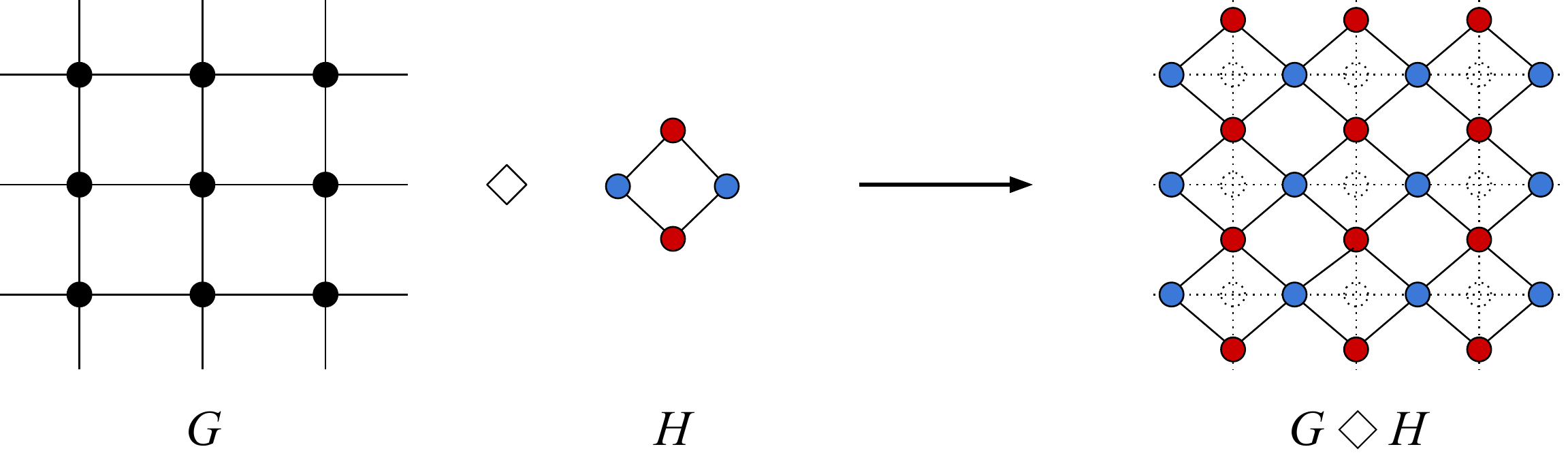}
    \caption{An example of the line product.}
    \label{fig:line-example}
\end{figure}

Observe that $G \Lining H$ has $nD/2$ vertices and is $2d$-regular.
Note also that the \emph{line graph} of $G$ (where two edges are connected if they share a vertex) is exactly the line product between $G$ and the $D$-clique, hence the name.

The key lemma (\Cref{lem:line-analysis}) is that if $G$ is a small-set (edge) expander and $H$ is a good unique-neighbor expander, then $G \Lining H$ is a unique-neighbor expander as well.
For the base graph $G$, we simply use the explicit Ramanujan graph construction~\cite{LPS88,Mor94}.
For the gadget $H$, we show that a \emph{random} biregular graph is a good unique-neighbor expander with high probability (\Cref{lem:gadget}).
Then, since $D$ is a constant, we can find such a graph by brute force.

\begin{remark}[On importance of Ramanujan base graphs]
    We require an $O(\sqrt{D})$ bound on the average degree of small subgraphs in a $D$-regular expander, which is proved using the fact that the second eigenvalue of a $D$-regular Ramanujan graph is $O(\sqrt{D})$.
    Typically, applications of expanders only need a second eigenvalue bound of $o(D)$, so we find it noteworthy that the analysis of our construction seems to require being within a constant factor of the Ramanujan bound.
\end{remark}

\parhead{Tripartite line product.}
Our constructions with stronger vertex expansion guarantees for small sets, featured in \Cref{thm:main-2}, are based on a suitably generalized version of the line product, which we call the \emph{tripartite line product}.
The first ingredient is a large tripartite \emph{base graph} $G$ on vertex set $L\cup M\cup R$, where $L, R, M$ denote the left, middle, right partitions respectively, and there are bipartite graphs between $L$, $M$ and between $M$, $R$.
The second ingredient is a small constant-sized bipartite \emph{gadget graph} $H$, which is chosen to be an excellent unique-neighbor expander --- as before, we can brute-force search to find $H$ that has expansion as good as a random graph.

The tripartite line product $G\Lining H$ is a bipartite graph on $L\cup R$ whose edges are obtained by placing a copy of $H$ between the left neighbors of $v$ and the right neighbors of $v$ for each $v\in M$.
See \Cref{def:tripartite-line} for a formal definition and \Cref{fig:tripartite-line-example} for an example.

\begin{figure}[ht!]
    \centering
    \includegraphics[width=0.8\textwidth]{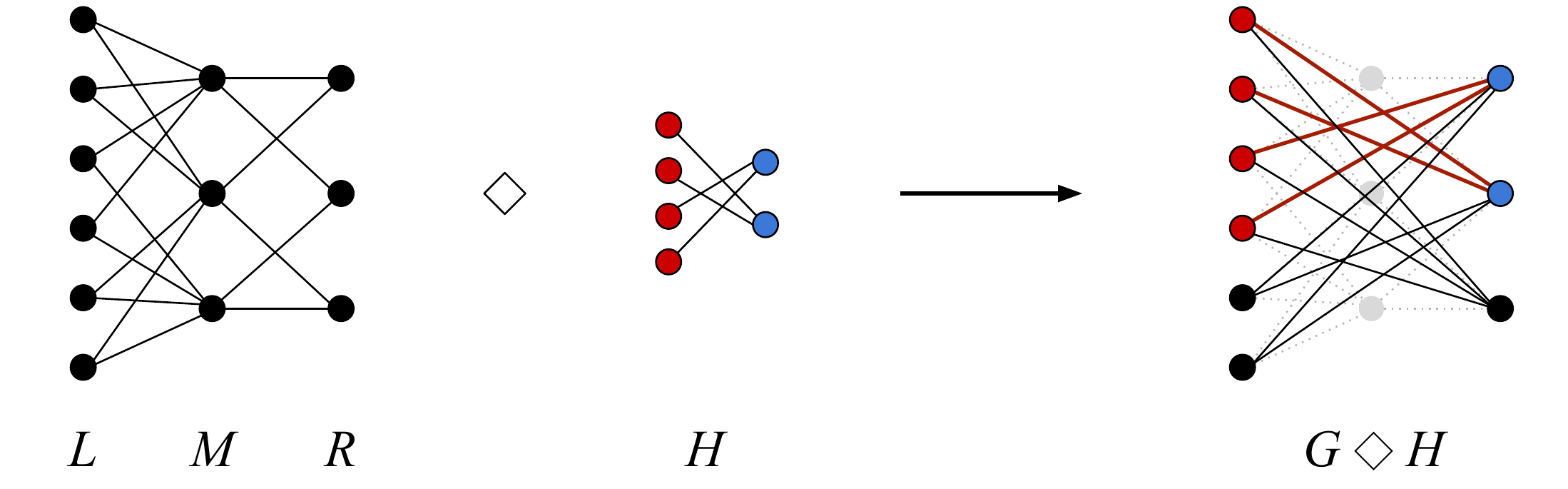}
    \caption{An example of the tripartite line product. The gadget placed on the first vertex in $M$ is highlighted in $G \Lining H$.}
    \label{fig:tripartite-line-example}
\end{figure}

To prove \Cref{thm:main-2}, we construct the base graph by choosing a $(K_1,D_1)$-biregular spectral expander between $L$ and $M$, and a $(D_2,K_2)$-biregular spectral expander between $M$ and $R$ for large enough and suitably chosen parameters $K_1,D_1,K_2,D_2$.
The gadget $H$ is chosen to be a $(\wt{d}_1,\wt{d}_2)$-biregular graph on vertex set $[D_1] \times [D_2]$, where $d_1 = K_1 \wt{d}_1$ and $d_2 = K_2 \wt{d}_2$.

\begin{remark}[Generalizing the line product and routed product]
    The \emph{line product} and \emph{routed product}, which feature in \cite{AC02,AD23,Gol23}, arise from instantiating the tripartite line product with appropriate base graphs.
    The line product can be obtained by choosing a $(2,D_1)$-biregular graph between $L$ and $M$, and a $(D_2,2)$-biregular graph between $M$ and $R$ in the base graph.
    The routed product arises by choosing a $(K,D_1)$-biregular graph between $L$ and $M$, and a $(D_2,1)$-biregular graph between $M$ and $R$ in the base graph.
\end{remark}

\begin{remark}[One-sided vs.\ two-sided expanders]  \label{rem:nonexpanding}
    A key difference between our work and previous constructions that only achieve one-sided expansion is in the choice of the graph between $M$ and $R$.
    \cite{AD23,Gol23} choose the graph between $M$ and $R$ to be a $(D_2,1)$-biregular graphs, equivalently a disjoint collection of stars centered at the vertices in $M$.
    This results in very small sets on the right with no unique-neighbors: for example, for any $v\in M$, consider the set of all its right neighbors.
\end{remark}

\parhead{Overview of the analysis of the tripartite line product.}
Let $Z = G \Lining H$, and let $G^{(1)}$ and $G^{(2)}$ be the bipartite graphs between $L, M$ and $M, R$ in $G$ respectively.
We choose the gadget $H$ to be a $(\wt{d}_1, \wt{d}_2)$-biregular graph on $D = D_1+D_2$ vertices such that $\wt{d}_1 \gg \frac{1}{\eps}\sqrt{K_2 D_2}$.
For a subset $S \subseteq L$, let $U = N_G(S) \subseteq M$ (neighbors of $S$ in the base graph).
Our analysis for the expansion of $S$ roughly follows two steps:
\begin{enumerate}[(1)]
    \item We partition $U$ according to the number of edges going to $S$ --- $U_{\ell}$ (``low $S$-degree'') and $U_h$ (``high $S$-degree'').
    We then show that if we partition $U$ according to a suitable threshold, then most edges leaving $S$ go to $U_\ell$.
    \label{step:left-expansion}

    \item Since each vertex in $U_{\ell}$ has small $S$-degree, in the local gadget graph it has ``large'' unique-neighbor expansion (here we rely on the expansion profile of the gadget; see \Cref{lem:gadget}).
    Then, we show that most of the unique-neighbors in the gadgets are also unique-neighbors of $S$ in $Z$.
    \label{step:right-expansion}
\end{enumerate}

Both steps rely on \Cref{thm:subgraph-density-main}.
For step \ref{step:left-expansion}, we apply \Cref{thm:subgraph-density-main} on the induced subgraph $G^{(1)}[S \cup U_h]$ (since $G^{(1)}$ is near-Ramanujan).
If we choose the threshold to be $\approx \sqrt{D}$, then since the right average degree of $G^{(1)}[S \cup U_h]$ is $\gtrsim \sqrt{D}$, the left average degree is $\lesssim 1+ \frac{\sqrt{K_1D_1}}{\sqrt{D}} \ll K_1$, thus most edges from $S$ go to $U_{\ell}$ instead of $U_h$.

For step \ref{step:right-expansion}, let $\wt{T} \subseteq R$ be the union of the unique-neighbors within each gadget.
By the expansion of the gadgets on vertices in $U_{\ell}$, we have a lower bound on $|\wt{T}|$.
However, a unique-neighbor in one gadget may also have edges from other gadgets, in which case it is \emph{not} a unique-neighbor of $S$ in $Z$.
To resolve this issue, we analyze the induced subgraph $G^{(2)}[U \cup \wt{T}]$ and show that a large fraction of $\wt{T}$ are unique-neighbors of $U$ in $G^{(2)}$, thus must also be unique-neighbors of $S$ in $Z$.
This is done by observing that the left average degree of $G^{(2)}[U \cup \wt{T}]$ must be $\gtrsim \wt{d}_1$.
Thus, the right average degree is $\lesssim 1 + \frac{\sqrt{K_2 D_2}}{\wt{d}_1} \leq 1 + \eps$ since we choose $\wt{d}_1 \gg \frac{1}{\eps}\sqrt{K_2 D_2}$.

One might attempt to tweak the parameters of the construction to obtain \emph{two-sided} lossless expansion.
However, this fails because in step \ref{step:left-expansion} we need the threshold to be large enough such that $S$ has lossless expansion in $G^{(1)}$, but then it is not possible to set the parameters of the gadget such that (i) each vertex in $U_{\ell}$ expands losslessly, and (ii) $\wt{d}_1 \gg \frac{1}{\eps} \sqrt{K_2 D_2}$ (and $\wt{d}_2 \gg \frac{1}{\eps} \sqrt{K_1 D_1}$) for the analysis in step \ref{step:right-expansion}.
See \Cref{sec:tripartite-line-product} (the proof of \Cref{thm:2-sided-un-sse}) for details.

\parhead{Lossless expansion of small sets.}
For small subsets $S \subseteq L$, we directly show that $S$ expands losslessly into $U$ under the assumption that $G^{(1)}$ has no short bicycles.
Specifically, in \Cref{lem:small-set-expansion} we prove that if a graph has no bicycle of length $g$, then all sufficiently small subsets (in particular, of size at most $\exp(O(g))$) expand losslessly.
To prove this, we first show that if a degree-$K$ set $S$ has expansion less than $(1-\eps)K$, then we can lower bound the spectral radius of the non-backtracking matrix of the graph induced on $S\cup N(S)$ by $C(\eps) \coloneqq \sqrt{\eps(K-1)}$.
Now, by the generalized Moore bound (\Cref{thm:moore-bound-main}), there is a bicycle of size $O(\log_{C(\eps)} |S|)$ in $G$.
Since $G$ has no bicycle of length-$g$, it lower bounds $|S|$ via the inequality $g\le O(\log_{C(\eps)} |S|)$.
This tells us that small sets in $G^{(1)}$ exhibit lossless expansion.
To establish that small sets in $Z$ are losslessly expanding, we follow the same strategy as before for step \ref{step:right-expansion}: since most vertices in $U$ has only 1 edge to $S$, they expand by a factor of $\wt{d}_1$ (from the gadget), and we use \Cref{thm:subgraph-density-main} to show that $T = N_Z(S)$ are mostly unique-neighbors, proving the small set expansion result in \Cref{thm:main-2}.

\parhead{One-sided lossless expanders.}
As mentioned in \Cref{rem:recover-lossless}, we are able to use the tripartite line product to construct \emph{one-sided} lossless expanders, recovering the result of Louis~\cite{Gol23}.
Our proof is almost the same as \Cref{thm:main-2}, but with $K_1 \gg K_2 = 1$ (hence $G^{(2)}$ is just a collection of stars).
In step \ref{step:left-expansion}, we choose a smaller $S$-degree threshold $\approx \eps \sqrt{D}$ and large enough $K_1$ to ensure that $1-\eps$ fraction of edges from $S$ go to $U_{\ell}$ (via \Cref{thm:subgraph-density-main}).
Then, with the smaller threshold, each vertex in $U_{\ell}$ expands losslessly, i.e., by a factor of $(1-\eps)\wt{d}_1$.
Since all gadgets have disjoint right vertices, there is no collision between gadgets, which finishes the proof.

\parhead{Subgraphs in near-Ramanujan bipartite graphs.}
We now give an overview of the proofs of \Cref{thm:subgraph-spectral-radius-main,thm:subgraph-density-main}.
The main ingredient is the \emph{Bethe Hessian} of a graph $G$ defined as $H_G(t) \coloneqq (D_G - \Id)t^2 - A_G t + \Id$.
Specifically, for an induced subgraph $G[S]$, we identify $\alpha > 0$ such that $H_{G[S]}(t) \succ 0$ for $t\in[0,\alpha]$ (\Cref{thm:biregular-subgraph}), which then implies \Cref{thm:subgraph-spectral-radius-main} via the well-known Ihara--Bass formula (\Cref{fact:ihara-bass}).

To prove $H_{G[S]}(t) \succ 0$ for $t\in[0,\alpha]$, we show that $f^\top H_{G[S]}(t) f > 0$ for any $f: G[S] \to \R$.
To establish $f^\top H_{G[S]}(t) f > 0$ for the claimed values of $t$, we need to relate $f^\top H_{G[S]}(t) f$ to the spectrum of the entire graph $G$.
To this end, we consider the \emph{regular tree extension} $T$ of $G[S]$ (\Cref{def:regular-tree-extension}), which is obtained by attaching trees (of depth $\ell$) to each vertex in $S$ such that the resulting graph is $(c,d)$-biregular except for the leaves.
Intuitively, the tree extension serves as a proxy of the $\ell$-step neighborhood of $S$ in $G$.
We then define the appropriate function extension $f_t$ of $f$ (\Cref{def:function-extension}) on the tree extension with ``sufficient decay'' down the tree (where the decay results from an upper bound on $t$).
For appropriately bounded $t$, we can show $f^{\top} H_{G[S]}(t) f$ is approximately equal to $f_t^{\top} H_{G}(t) f_t$, which can be controlled via the spectrum of $G$.

\parhead{\erdos--\renyi vs.\ random regular graphs.}
One technical subtlety is that the edges of a random regular graph are (slightly) \emph{correlated}, which makes it difficult to directly analyze its unique-neighbor expansion.
On the other hand, the analysis is straightforward for \erdos--\renyi graphs since the edges are drawn independently (\Cref{lem:erdos-renyi-gadget}).

Thus, we give a slight generalization of an embedding theorem given in \cite{FK16} between \erdos--\renyi graphs and random regular graphs (\Cref{lem:stoc-dom}) which allows us to extend the analysis to random regular graphs.

\subsection{Open questions}

\noindent {\bf Quantum codes from unique-neighbor expanders.} Lin \& Hsieh \cite{LH22b} construct quantum codes assuming the existence of two-sided algebraic lossless expanders, and their current proof requires the unique-neighbor expansion of sets of vertices with degree-$d$ to exceed $d/2$.

In contrast, in the setting of classical LDPC codes, if all subsets of vertices of size at most $\Delta$ have even a single unique-neighbor, the resulting code is guaranteed to have distance at least $\Delta$, albeit without a clear decoding algorithm.

\begin{question}
    Does the construction of \cite{LH22b} yield a good quantum code when instantiated with a $\gamma$-two-sided algebraic unique-neighbor expander for small $\gamma > 0$?
\end{question}

\parhead{Algebraic two-sided lossless expanders from random graphs.}
Two-sided lossless expanders with relevant algebraic properties are not known to exist, even using randomness.
Random bipartite graphs exhibit two-sided lossless expansion (see, for example, \cite[Theorem 4.16]{HLW18}), but do not admit any nontrivial group actions.

A natural approach is to use an algebraic random graph, such as a random Cayley graph, and study its vertex expansion properties.
It is also possible to achieve lossless expansion using the tripartite line product if the small-set edge expansion of the base graph is far better than that guaranteed by spectral expansion.
A concrete question in this direction that could result in a lot of progress is the following.
\begin{question}[Beyond spectral certificates]
    Let $\Gamma$ be a group and let $\bS$ be a set of $D$ generators chosen independently and uniformly from $\Gamma$.
    Let $\bG \coloneqq \mathrm{Cay}(\Gamma,\bS)$ be the Cayley graph given by the generator set $\bS$.
    Is it true that with high probability over the choice of $\bS$: for all subsets of vertices $T\subseteq\Gamma$ where $|T|\le|\Gamma|/\mathrm{poly}(D)$, the number of edges inside $T$ is at most $0.1\sqrt{D}|T|$?
\end{question}
\begin{remark}
    Spectral expansion can at best guarantee that the number of edges inside a set $T$ is at most $\sqrt{D}|T|$.
    A resolution to the above question would necessarily use other properties of the random graph and group, beyond merely the magnitude of its eigenvalues.
\end{remark}

\subsection{Organization}

In \Cref{sec:prelims}, we give some technical preliminaries.
In \Cref{sec:line-product}, we define the line product and prove its unique-neighbor expansion assuming good expansion properties of the base and gadget graphs.
In \Cref{sec:un-expanders}, we prove \Cref{thm:main} by instantiating the line product with a suitably chosen base graph and gadget graph.

In \Cref{sec:nb-specrad}, we prove \Cref{thm:subgraph-spectral-radius-main}.
In \Cref{sec:subgraph-density}, we use \Cref{thm:subgraph-spectral-radius-main} to prove \Cref{thm:subgraph-density-main}.
In \Cref{sec:ss-une}, we define the tripartite line product, and instantiate it with an appropriately chosen base graph and gadget graph to prove \Cref{thm:main-2} via \Cref{thm:subgraph-density-main} and \Cref{thm:moore-bound-main}.
In \Cref{sec:one-sided-lossless}, we show how we can recover Golowich's \cite{Gol23} construction and analysis of one-sided lossless expanders as an instantiation of the tripartite line product and an application of \Cref{thm:subgraph-density-main}.

In \Cref{app:exact-moore-bound}, we prove the sharpened Moore bound \Cref{thm:moore-bound-main}.
In \Cref{sec:gadget,sec:sandwich}, we analyze the expansion profile of the gadget graph.
\section{Preliminaries}
\label{sec:prelims}

\paragraph{Notation.}  Given a graph $G$, we use $V(G)$ to denote its set of vertices, $E(G)$ to denote its set of edges.
If $G$ is bipartite, we use $L(G)$ and $R(G)$ to denote its left and right vertex sets respectively.
We write $\deg_G(v)$ to denote the degree of vertex $v$ in $G$ (we will omit the subscript $G$ if clear from context).
We say that a bipartite graph $G$ is $(d_1,d_2)$-biregular if $\deg_G(v) = d_1$ for all $v\in L(G)$ and $\deg_G(v) = d_2$ for all $v\in R(G)$.

For any subset of edges $F$, we use $F(v) \subseteq F$ to denote the set of edges in $F$ incident to $v$, hence $|F(v)| = \deg_F(v)$.
For a set of vertices $S$, we use $e_F(S)$ to denote the number of edges in $F$ with both endpoints in $S$, and $e_F(S,T)$ to denote the number of $(u,v) \in S\times T$ such that $\{u,v\} \in F$ (for disjoint sets $S$ and $T$).
We will omit the subscript if $F = E(G)$.
We denote the eigenvalues of the normalized adjacency matrix of $G$ by $1 = \lambda_1(G)\ge\dots\ge\lambda_n(G) \geq -1$.
We say $G$ is a $\lambda$-spectral expander if $\lambda_2(G) \le \lambda$.\footnote{This is in contrast to most scenarios where one requires both $\lambda_2(G)$ as well as $-\lambda_n(G)$ to be bounded by $\lambda$.}

\paragraph{Random bipartite graph models.}
Throughout the paper, we will write random variables in \textbf{boldface}.
Fix $n_1, n_2, m, d_1, d_2 \in \N$ such that $n_1 d_1 = n_2 d_2$.
We denote $K_{n_1,n_2}$ as the complete bipartite graph with left/right vertex sets $L = [n_1]$ and $R = [n_2]$.
We use $\bG \sim \ER_{n_1, n_2, m}$ to denote a random graph sampled from the uniform distribution over (simple) bipartite graphs on $L = [n_1]$, $R = [n_2]$ with exactly $m$ edges.
With slight abuse of notation, we use $\bH \sim \ER_{n_1,n_2,p}$ for $p \in (0,1)$ to denote a random graph such that each potential edge is included with probability $p$.
Similarly, we use $\bR \sim \Reg_{n_1, n_2, d_1, d_2}$ to denote a graph from the uniform distribution over $(d_1,d_2)$-biregular bipartite graphs on $L = [n_1]$, $R = [n_2]$.

\subsection{Graph expansion}
It is a standard fact that small subgraphs in spectral expanders have bounded number of edges.
The following is a special case of the Expander Mixing Lemma~\cite{AC88}, and we include a short proof for completeness.
\begin{lemma}   \label{lem:EML-same-set}
    Let $G$ be a $D$-regular $\lambda$-spectral expander.
    Then for any set $S\subseteq V(G)$, where $|S| = \eps |V(G)|$:
    \[
        e(S) \le D |S|\cdot \frac{\lambda + \eps}{2} \mper
    \]
\end{lemma}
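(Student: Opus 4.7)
The plan is to prove this via a standard quadratic-form argument using the adjacency matrix $A_G$, exploiting the fact that on a $D$-regular graph the all-ones vector is an eigenvector of $A_G$ with eigenvalue $D$ and the spectral expansion controls the action of $A_G$ on the orthogonal complement.

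First I would write $2e(S) = \mathbf{1}_S^{\top} A_G \mathbf{1}_S$ where $\mathbf{1}_S \in \{0,1\}^{V(G)}$ is the indicator vector of $S$, and let $n = |V(G)|$. Then decompose $\mathbf{1}_S = \alpha \mathbf{1} + v$, where $\mathbf{1}$ is the all-ones vector, $\alpha = \langle \mathbf{1}_S, \mathbf{1}\rangle/n = |S|/n = \eps$, and $v \perp \mathbf{1}$. A direct computation gives $\|v\|^2 = \|\mathbf{1}_S\|^2 - \alpha^2 n = |S|(1-\eps) \le |S|$.

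Next, using that $A_G \mathbf{1} = D\mathbf{1}$ and that the orthogonal complement of $\mathbf{1}$ is spanned by the remaining eigenvectors, whose $A_G$-eigenvalues are at most $\lambda D$ by the $\lambda$-spectral expansion assumption (note that only the upper bound $\lambda_2(G) \le \lambda$ is needed since we are upper-bounding, not controlling absolute values), I would write
\begin{align*}
    2e(S) \;=\; \alpha^2 \mathbf{1}^{\top} A_G \mathbf{1} + v^{\top} A_G v
    \;\le\; \eps^2 \cdot D n + \lambda D \|v\|^2
    \;\le\; \eps D |S| + \lambda D |S|.
\end{align*}
Dividing by $2$ yields $e(S) \le D|S|\cdot \frac{\lambda + \eps}{2}$, as desired.

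There is essentially no obstacle here beyond bookkeeping: the only subtlety is recognizing that the proof only uses the upper spectral bound $\lambda_2(G) \le \lambda$ and not any bound on $|\lambda_n(G)|$, consistent with the paper's footnote that its notion of spectral expansion is one-sided. The tighter form $\|v\|^2 = |S|(1-\eps)$ could be used to slightly sharpen the bound to $\eps + \lambda(1-\eps)$, but this is absorbed into the stated inequality.
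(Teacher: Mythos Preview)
Your proof is correct and follows essentially the same approach as the paper's own proof: decompose $\mathbf{1}_S$ into its component along $\mathbf{1}$ and the orthogonal remainder, then bound the quadratic form using $A_G\mathbf{1}=D\mathbf{1}$ and the one-sided spectral bound $\lambda_2(G)\le\lambda$. The paper's proof is line-for-line the same argument, including the observation that $\|v\|^2 = |S|(1-\eps)\le |S|$.
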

\begin{proof}
    Let $n = |V(G)|$, $A$ be the (unnormalized) adjacency matrix of $G$, and $\one_S\in \{0,1\}^n$ be the indicator vector of $S$.
    We can decompose $\one_S$ as $\one_S = \frac{|S|}{n}\one + u$ where $u \perp \one$ and $\|u\|_2^2 = |S|(1-\frac{|S|}{n}) \leq |S|$.
    Then,
    $2e(S) = \one_S^{\top} A \one_S \le \frac{D|S|^2}{n} + \lambda_2(G) \cdot D \|u\|_2^2 \le D |S|(\eps + \lambda)$.
\end{proof}

Within graphs of low ``hereditary'' average degree, a significant fraction of edges are incident to low-degree vertices.
\begin{lemma}   \label{lem:most-low}
    For any $\gamma>0$, let $F$ be a graph such that for all $S\subseteq V(F)$, $2e(S) \le \gamma|S|$.  Write $V(F) = F_{\ell}\sqcup F_h$ where $F_{\ell}$ comprises all vertices $v$ such that $\deg(v) \le 2\gamma$ and $F_h$ to denote the remaining vertices.
    Then:
    \[
        2e(F) \le 3\sum_{v\in F_{\ell}} \deg(v) \mper
    \]
\end{lemma}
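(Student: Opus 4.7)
The plan is to split the edges of $F$ into those wholly contained in the high-degree part $F_h$ and those with at least one endpoint in $F_\ell$, then argue that the latter already account for at least two-thirds of the total.

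First I would apply the hypothesis $2e(S) \le \gamma|S|$ to the special set $S = F_h$, which yields
\[
    2e(F_h) \le \gamma|F_h|.
\]
On the other hand, every vertex in $F_h$ has degree strictly greater than $2\gamma$ by definition, so
\[
    \sum_{v\in F_h} \deg_F(v) > 2\gamma|F_h|.
\]
Combining these two inequalities, $2e(F_h) < \tfrac12 \sum_{v\in F_h}\deg_F(v)$; in words, the edges that live entirely inside $F_h$ contribute less than half of the total degree mass of $F_h$.

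Next I would use the identity $\sum_{v\in F_h}\deg_F(v) = 2e(F_h) + e(F_h,F_\ell)$, which together with the previous bound gives
\[
    e(F_h,F_\ell) > \tfrac12 \sum_{v\in F_h}\deg_F(v),\qquad\text{equivalently}\qquad \sum_{v\in F_h}\deg_F(v) < 2\, e(F_h,F_\ell).
\]
Since each edge in $e(F_h,F_\ell)$ contributes $1$ to the degree of exactly one vertex in $F_\ell$, we have $e(F_h,F_\ell) \le \sum_{v\in F_\ell}\deg_F(v)$. Hence $\sum_{v\in F_h}\deg_F(v) \le 2\sum_{v\in F_\ell}\deg_F(v)$.

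Finally, writing $2e(F) = \sum_{v\in F_\ell}\deg_F(v) + \sum_{v\in F_h}\deg_F(v)$ and substituting the above bound immediately yields $2e(F) \le 3\sum_{v\in F_\ell}\deg_F(v)$, as required. There is no real obstacle here: the only ingredient is the carefully chosen instantiation $S=F_h$ of the hereditary hypothesis, which makes the degree-sum accounting go through in a couple of lines; one just has to be a little careful with the (trivially handled) edge case $F_h=\emptyset$.
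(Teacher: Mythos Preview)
Your proof is correct and follows essentially the same approach as the paper: both instantiate the hereditary hypothesis at $S=F_h$ and then do a short degree-sum accounting. The paper phrases the bookkeeping slightly differently (deriving $e(F_h,F_\ell)\ge 2e(F_h)$ and then bounding $2e(F)=2e(F_h)+2e(F_h,F_\ell)+2e(F_\ell)\le 3e(F_h,F_\ell)+2e(F_\ell)$), but the content is identical.
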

\begin{proof}
    On one hand, by assumption we have:
    \[
        2e(F_h) \le \gamma |F_h| \mper
    \]
    On the other hand,
    \[
        2e(F_h) + e(F_h, F_{\ell}) = \sum_{v\in F_h} \deg(v) \ge 2\gamma|F_h| \ge 4e(F_h) \mper
    \]
    Consequently $e(F_h, F_{\ell}) \ge 2e(F_h)$.
    Since $F_h$ and $F_{\ell}$ are disjoint:
    \begin{align*}
        2e(F) = 2e(F_h) + e(F_h, F_\ell) + e(F_{\ell}, F_h) + 2 e(F_\ell) \mper
    \end{align*}
    This gives us:
    \(
        2e(F) \le 3e(F_h, F_{\ell}) + 2e(F_{\ell}) \le 3 \sum_{v\in F_{\ell}} \deg(v) \mper
    \)
\end{proof}


\subsection{Non-backtracking matrix}
\label{sec:nb-matrix}

\paragraph{Notation.}
Given an undirected graph $G = (V, E)$ with $|V| = n$ vertices and $|E|= m$ edges, we denote $A_G \in \{0,1\}^{n\times n}$ to be its adjacency matrix, $D_G\in \R^{n\times n}$ to be its diagonal degree matrix, and finally $B_G \in \{0,1\}^{2m\times 2m}$ to be its non-backtracking matrix (introduced by \cite{Hashimoto89}) defined as follows: for \emph{directed} edges $(u_1,v_1), (u_2,v_2)$ in the graph,
\begin{align*}
    B_G[(u_1,v_1), (u_2,v_2)] = \1(v_1 = u_2) \cdot \1(u_1 \neq v_2) \mper
\end{align*}
Note that the non-backtracking matrix is not symmetric.
Let $\lambda_1, \dots, \lambda_{2m} \in \C$ be the eigenvalues of $B_G$ ordered such that $|\lambda_1| \geq |\lambda_2| \geq \cdots \geq |\lambda_{2m}|$. The Perron-Frobenius theorem implies that $\lambda_1$ is real and non-negative.
We denote $\rho(B_G) = \lambda_1$ to be the spectral radius of $B_G$.

A crucial identity we will need is the Ihara-Bass formula~\cite{Ihara66,Bass92} which gives a translation between the eigenvalues of the adjacency matrix and the eigenvalues
of the non-backtracking matrix:

\begin{fact}[Ihara-Bass formula] \label{fact:ihara-bass}
    For any graph $G$ with $n$ vertices and $m$ edges,
    the following identity on univariate polynomials is true:
    \[
        \det\parens*{\Id - B_Gt} = \det\parens*{ H_G(t) } \cdot (1-t^2)^{m-n}
    \]
    where $H_G(t)\coloneqq(D_G-\Id)t^2 - A_G t + \Id$ is the \emph{Bethe Hessian} of $G$.
\end{fact}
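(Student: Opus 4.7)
The plan is to derive the Ihara--Bass identity through an explicit algebraic factorization in the $2m \times 2m$ oriented-edge space, following Bass's original approach. First, I would introduce the source and target incidence matrices $S, T \in \{0,1\}^{n \times 2m}$ with $S_{v,(u,w)} = \1(v=u)$ and $T_{v,(u,w)} = \1(v=w)$, along with the edge-reversal involution $J \in \{0,1\}^{2m \times 2m}$ that swaps each oriented edge with its reverse. A routine check gives the identities $ST^\top = TS^\top = A_G$, $SS^\top = TT^\top = D_G$, $SJ = T$, $TJ = S$, $J^2 = \Id$, and crucially $B_G = T^\top S - J$ (which simply encodes the definition: shared middle vertex, minus the backtracking step). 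Because $J$ is an involution with no fixed vectors (assuming no self-loops), it has eigenvalues $\pm 1$ each with multiplicity $m$, so $\det(\Id + Jt) = (1-t^2)^m$.

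Next, I would compute $(\Id - B_G t)(\Id + Jt)$ directly. Using $B_G J = T^\top(SJ) - J^2 = T^\top T - \Id$, the product collapses into $(\Id + Jt)^2 - t\,T^\top(S + Tt)$. Taking determinants and pushing the rank-$n$ perturbation through Sylvester's determinant identity (the rectangular version of the matrix determinant lemma) yields
$(1-t^2)^m \det(\Id - B_G t) = (1-t^2)^{2m} \det\bigl(\Id_n - t(S + Tt)(\Id + Jt)^{-2} T^\top\bigr)$.

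Finally, I would explicitly evaluate the $n \times n$ expression on the right. From $(\Id + Jt)^{-1} = (\Id - Jt)/(1-t^2)$ one gets $(\Id + Jt)^{-2} = \bigl((1+t^2)\Id - 2Jt\bigr)/(1-t^2)^2$. Expanding $(S + Tt)\bigl((1+t^2)\Id - 2Jt\bigr)T^\top$ and applying $ST^\top = A$, $TT^\top = D$, $SJT^\top = TT^\top = D$, and $TJT^\top = ST^\top = A$, the four terms telescope exactly to $(1-t^2)(A - Dt)$. Substituting back,
$\det(\Id - B_G t) = (1-t^2)^{m-n} \det\bigl(\Id - A_G t + (D_G - \Id) t^2\bigr) = (1-t^2)^{m-n} \det(H_G(t))$,
which is the claimed identity. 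Since both sides are polynomials in $t$, the argument (which a priori requires $\Id + Jt$ to be invertible) extends by polynomial identity to all $t$.

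The main obstacle is the telescoping calculation in the last paragraph: the identity turns entirely on the four relations $ST^\top = A$, $TT^\top = D$, $SJT^\top = D$, $TJT^\top = A$ conspiring so that the $(1+t^2)$ and $2t$ cross terms combine into the clean factor $(1-t^2)(A - Dt)$; the rest is bookkeeping. Minor care is needed to handle self-loops (which shift one eigenvalue of $J$), but this is a standard adjustment that does not affect the final formula.
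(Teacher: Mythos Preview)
The paper does not give a proof of this statement: it is recorded as a \emph{Fact} with citations to \cite{Ihara66,Bass92} and is used as a black box thereafter. So there is no ``paper's own proof'' to compare against.

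Your argument is the standard Bass-style derivation and is correct. The identities $B_G = T^\top S - J$ and $B_G J = T^\top T - \Id$ check out against the paper's definition of $B_G$, the factorization $(\Id - B_G t)(\Id + Jt) = (\Id + Jt)^2 - t\,T^\top(S + Tt)$ is right, and the telescoping $(S+Tt)\bigl((1+t^2)\Id - 2Jt\bigr)T^\top = (1-t^2)(A - Dt)$ goes through exactly as you describe once one uses $SJT^\top = TT^\top = D$ and $TJT^\top = ST^\top = A$. After that, pulling the $(1-t^2)^{-n}$ out of the $n\times n$ determinant yields the claimed $(1-t^2)^{m-n}\det(H_G(t))$, and polynomial continuation handles the points $t = \pm 1$ where $(\Id+Jt)$ is singular. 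Since the paper only invokes the formula for simple graphs (and really only via its consequence \Cref{lem:spectral-radius}), the self-loop caveat is immaterial here.
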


The Ihara-Bass formula gives a direct relationship between the spectral radius of $B_G$ and the positive definiteness of $H_G(t)$.
The following is classic (e.g. \cite[Proof of Theorem 5.1]{FM17}), though we include a proof for completeness.

\begin{lemma} \label{lem:spectral-radius}
    Let $G$ be a graph and $0 < \alpha < 1$.
    Then, the spectral radius $\rho(B_G) \leq \frac{1}{\alpha}$ if and only if $H_G(t) \succ 0$ for all $t \in [0, \alpha)$.
    As a result, if $H_G(\frac{1}{\rho})$ has a non-positive eigenvalue for some $\rho > 0$, then $\rho(B_G) \geq \rho$.
\end{lemma}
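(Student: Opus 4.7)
The plan is to derive both directions from the Ihara--Bass identity (\Cref{fact:ihara-bass}) combined with the basic continuity/positivity properties of the matrix family $H_G(t)$. The starting observation is that $H_G(0) = \Id \succ 0$ and that the eigenvalues of $H_G(t)$, viewed as functions of $t \in \R$, are continuous. Hence the set of $t \ge 0$ on which $H_G(t)$ remains positive definite is an interval $[0, t^\ast)$, where $t^\ast$ is either $+\infty$ or the smallest positive $t$ at which $H_G(t)$ becomes singular (equivalently, $\det H_G(t)=0$). So the task reduces to pinning down $t^\ast$ in terms of the non-backtracking spectrum, via \Cref{fact:ihara-bass}.

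For the forward direction, I would assume $\rho(B_G)\le 1/\alpha$ and show $H_G(t)\succ 0$ for all $t\in[0,\alpha)$. The eigenvalues of $\Id - tB_G$ are $1 - t\lambda_i(B_G)$, which are nonzero whenever $|t|\cdot|\lambda_i(B_G)|<1$; for $t\in[0,\alpha)$ this is guaranteed because $1/t>1/\alpha \ge \rho(B_G)$. Thus $\det(\Id-tB_G)\ne 0$ throughout $[0,\alpha)$. Since $\alpha<1$, the factor $(1-t^2)^{m-n}$ is also nonzero on this interval, so by \Cref{fact:ihara-bass} we conclude $\det H_G(t)\ne 0$ for all $t\in[0,\alpha)$. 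Combined with $H_G(0)\succ 0$ and the continuity observation, this forces $H_G(t)\succ 0$ throughout $[0,\alpha)$.

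For the converse, I would argue by contrapositive: if $\rho(B_G)>1/\alpha$, then by Perron--Frobenius the spectral radius is itself a real nonnegative eigenvalue of $B_G$, so setting $t^\ast = 1/\rho(B_G) \in (0,\alpha)$ gives $\det(\Id - t^\ast B_G)=0$. Since $t^\ast<\alpha<1$, the factor $(1-(t^\ast)^2)^{m-n}$ is nonzero, and \Cref{fact:ihara-bass} then yields $\det H_G(t^\ast)=0$. In particular $H_G(t^\ast)$ has a zero eigenvalue and fails to be positive definite, contradicting the assumption that $H_G(t)\succ 0$ on $[0,\alpha)$.

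The second statement is then an immediate corollary: given $\rho>0$ with $H_G(1/\rho)$ having a non-positive eigenvalue, suppose towards a contradiction that $\rho(B_G)<\rho$. Choose any $\alpha$ with $1/\rho<\alpha\le 1/\rho(B_G)$, which is possible precisely because $\rho(B_G)<\rho$ (and we can take $\alpha<1$ whenever $\rho>1$, which is the relevant regime; the degenerate case $\rho\le 1$ can be handled separately or absorbed into the statement's hypothesis). The forward direction then gives $H_G(t)\succ 0$ on $[0,\alpha)$, and $1/\rho$ lies in this interval, contradicting the hypothesis that $H_G(1/\rho)$ has a non-positive eigenvalue. I do not anticipate a genuine obstacle here: the main subtlety is simply bookkeeping to ensure that the factor $(1-t^2)^{m-n}$ in Ihara--Bass is nonvanishing on the interval of interest, which is why the hypothesis $\alpha<1$ appears.
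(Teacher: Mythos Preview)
Your proposal is correct and follows essentially the same approach as the paper: both arguments combine the Ihara--Bass identity with the continuity of the eigenvalues of $H_G(t)$ starting from $H_G(0)=\Id$, together with Perron--Frobenius to ensure $\rho(B_G)$ is a real eigenvalue. The only cosmetic differences are that you run the forward direction directly (showing $\det(\Id-tB_G)\neq 0$ hence $\det H_G(t)\neq 0$) while the paper argues by contradiction, and you phrase the converse as a contrapositive while the paper argues directly; your remark about the $\rho\le 1$ edge case is also present implicitly in the paper's limiting argument $\alpha=1/(\rho-\eps)$.
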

\begin{proof}
    First observe that $H_G(0) = \Id \succ 0$.
    Since $H_G(t)$ is symmetric, the eigenvalues of $H_G(t)$ are real and move continuously on the real line as $t$ increases from $0$.
    Note also that by the Perron-Frobenius theorem, $\rho(B_G) = \lambda_1(B_G) \geq 0$.

    Suppose for contradiction that $\rho(B_G) \leq \frac{1}{\alpha}$ but $H_G(t)$ has a non-positive eigenvalue for some $t \in [0, \alpha)$.
    Due to $H_G(0) = \Id$ and continuity of the eigenvalues, there must be a $t^* \in (0, t]$ such that $H_G(t^*)$ has a zero eigenvalue, meaning $\det(H_G(t^*)) = 0$.
    By \Cref{fact:ihara-bass}, this means that $\det(\Id - B_G t^*) = 0$, i.e., $B_G$ has an eigenvalue $\frac{1}{t^*} \geq \frac{1}{t} > \frac{1}{\alpha}$.
    This contradicts that $\rho(B_G) \leq \frac{1}{\alpha}$.

    On the other hand, if $H_G(t) \succ 0$ for all $t\in [0, \alpha)$, then by \Cref{fact:ihara-bass} $\det(\Id- B_G t) > 0$ for all $t\in[0,\alpha)$.
    Since $\det(\Id - B_G / \lambda_1) = 0$, it follows that $\frac{1}{\lambda_1} \ge \alpha$, i.e., $\lambda_1 \leq \frac{1}{\alpha}$.

    Finally, suppose $H_G(\frac{1}{\rho}) \not\succ 0$ for some $\rho > 0$, then setting $\alpha = \frac{1}{\rho-\eps} > \frac{1}{\rho}$ for any $\eps \to 0^+$, we have that $\rho(B_G) > \rho-\eps$.
    This implies that $\rho(B_G) \geq \rho$. 
\end{proof}
\section{The line product of graphs}
\label{sec:line-product}

Our construction is based on taking the \emph{line product} of a suitably chosen spectral expander and unique-neighbor expander.
See \Cref{fig:line-example} for an example.

\begin{definition}[Line product]  \label{def:line-product}
    Let $G$ be a $D$-regular graph on vertex set $[n]$, and let $H$ be a graph on vertex set $[D]$.
    For each $v\in V(G)$ and $i\in[D]$, let $e_v^i$ denote the $i$-th incident edge to $v$.
    The \emph{line product} $G\Lining H$ is the graph on vertex set $E(G)$ and edges obtained by placing a copy of $H$ on $E(v)$ for each $v\in V(G)$, such that $\{e_v^i, e_v^j\}$ is an edge in $H(v)$ if and only if $\{i,j\}$ is an edge in $H$.
    
    For convenience, we denote $H(v)$ to be the subgraph of $G \Lining H$ given by the copy of $H$ associated with $v$.
\end{definition}

\begin{definition}
    Given a graph $H$, we denote $\UN_H(S)$ to be the set of unique-neighbors of $S$.
    The \emph{unique-neighbor expansion profile} of a graph $H$, denoted $P_H$, is defined:
    \[
        P_H(t) \coloneqq \min_{S\subseteq V(H): |S| \le t} \frac{|\UN_H(S)|}{|S|}
        \quad \textnormal{for $t \geq 1$}.
    \]
    When $H$ is a bipartite graph, we use $P_H^{(L)}(t)$ and $P_H^{(R)}(t)$ to denote the analogous quantity where the minimum is taken only over subsets of the left and right respectively.
\end{definition}

\begin{lemma}[Expansion profile of the line product]   \label{lem:line-analysis}
    Let $\gamma > 0$ and $\eps \in (0,1)$.
    Suppose
    \begin{enumerate}
        \item $G$ is a $D$-regular graph such that
        $2 e(S) \leq \gamma |S|$ for all $S\subset V(G)$ with $|S| \leq \eps |V(G)|$, and
        \item $H$ is a graph on $[D]$ such that $P_H(t) \ge 12\gamma/t$ for $1 \le t \le 2\gamma$.
    \end{enumerate}
    Then for $Z\coloneqq G\Lining H$, $P_Z\left({\frac{\eps}{D}}\cdot |V(Z)| \right) \ge \frac{P_H(2\gamma)}{3}$.
\end{lemma}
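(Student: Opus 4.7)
The plan is to count the global unique-neighbors of a small set $S \subseteq V(Z)$ by combining local unique-neighbor expansion of each gadget copy $H(v)$ with the small-set edge expansion of $G$. Interpret $S$ as a subset of edges of $G$, and let $T \subseteq V(G)$ be the set of endpoints of edges in $S$; since $|T| \leq 2|S| \leq \eps |V(G)|$, the hypothesis on $G$ yields $2e_G(T) \leq \gamma|T|$. Write $S_v \coloneqq S \cap E(v)$ for $v \in T$, so $\sum_v |S_v| = 2|S|$.

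To isolate ``typical'' endpoints, I would partition $T = T_\ell \sqcup T_h$ by the threshold $|S_v| \leq 2\gamma$. Applying \Cref{lem:most-low} to the graph on $T$ with edge set $S$ (which inherits the hereditary edge density bound from $G$) gives $\sum_{v \in T_\ell}|S_v| \geq \tfrac{2}{3}|S|$, and a simple Markov bound gives $|T_h| \leq |S|/\gamma$. For each $v \in T_\ell$, let $U_v$ be the set of unique-neighbors of $S_v$ in $H(v)$; the hypothesis $P_H(t) \geq 12\gamma/t$ for $t \in [1, 2\gamma]$ yields two complementary lower bounds, $|U_v| \geq P_H(2\gamma)|S_v|$ (by monotonicity of $P_H$) and $|U_v| \geq 12\gamma$ (by taking $t = |S_v|$). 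Summing gives both $\sum_{T_\ell}|U_v| \geq \tfrac{2}{3}P_H(2\gamma)|S|$ and $\sum_{T_\ell}|U_v| \geq 12\gamma|T_\ell|$.

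The central step is to pass from local to global unique-neighbors. Call $e' \in V(Z)$ a \emph{unique-neighbor via $v$} if $e' \in U_v$ and, writing $e' = \{v, w\}$, we have $N_{H(w)}(e') \cap S_w = \emptyset$. Since $|N_Z(e') \cap S| = |N_{H(v)}(e') \cap S_v| + |N_{H(w)}(e') \cap S_w|$, any $e' \in \UN_Z(S)$ is a unique-neighbor via exactly one of its endpoints, so these sets are pairwise disjoint subsets of $\UN_Z(S)$. Hence $|\UN_Z(S)| \geq \sum_{v \in T_\ell}(|U_v| - |B_v|)$, where $B_v$ consists of those $e' \in U_v$ with other endpoint $w \in T$ having $N_{H(w)}(e') \cap S_w \neq \emptyset$. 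The crude bound $|B_v| \leq |N_G(v) \cap T|$ combined with $2e_G(T) \leq \gamma|T|$ yields $\sum_v |B_v| \leq \gamma|T| \leq \gamma|T_\ell| + |S|$.

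Finally, I combine these bounds via a $\tfrac{1}{12}$-weighted convex combination of the two lower bounds on $\sum|U_v|$, obtaining $\sum|U_v| \geq \gamma|T_\ell| + \tfrac{11}{18}P_H(2\gamma)|S|$. The $\gamma|T_\ell|$ contribution then exactly cancels the linear-in-$|T_\ell|$ part of the collision bound, leaving
\[
    |\UN_Z(S)| \geq \tfrac{11}{18}P_H(2\gamma)|S| - |S| \geq \tfrac{P_H(2\gamma)}{3}|S|,
\]
where the last inequality uses $P_H(2\gamma) \geq 6$. The main obstacle in this analysis is controlling the \emph{gadget collisions}: a unique-neighbor of $S_v$ within $H(v)$ may fail to be a global unique-neighbor of $S$ in $Z$ because of extra incidences contributed by $S_w$ via $H(w)$. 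The constant $12$ in the hypothesis $P_H(t) \geq 12\gamma/t$ is chosen precisely so that the pointwise bound $|U_v| \geq 12\gamma$ absorbs the $\gamma|T_\ell|$ portion of these collisions, while the proportional bound $|U_v| \geq P_H(2\gamma)|S_v|$ handles the residual $|S|$ portion coming from $T_h$.
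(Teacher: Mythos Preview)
Your proof is correct and follows essentially the same strategy as the paper: interpret the set as an edge set in $G$, split its endpoints by the $2\gamma$ degree threshold via \Cref{lem:most-low}, and charge gadget collisions against the small-set edge bound $2e_G(T)\le\gamma|T|$. The only difference is bookkeeping in how that collision term is absorbed---the paper applies \Cref{lem:most-low} a second time to $G[S]$ to get $2e(S)\le 6\gamma|S_1|$ and then subtracts $6\gamma/|T(v)|$ pointwise from $P_H(|T(v)|)$, whereas you split $\gamma|T|=\gamma|T_\ell|+\gamma|T_h|$, bound $|T_h|\le|S|/\gamma$ by Markov, and cancel the $\gamma|T_\ell|$ portion via a convex combination of your two aggregate lower bounds on $\sum_{v\in T_\ell}|U_v|$.
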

\begin{proof}
    Let $T\subseteq V(Z)$, viewed as a collection of edges in $G$, be such that $|T| \le {\frac{\eps}{D}}\cdot |V(Z)| = \frac{\eps}{2} \cdot |V(G)|$ (since $|V(Z)| = |E(G)| = \frac{D}{2}|V(G)|$).
    Let $S \subseteq V(G)$ be the set of vertices of $G$ touched by $T$. 
    Note that $|S| \le 2|T| \le \eps |V(G)|$.
    Recall that we denote $T(v) \subseteq T$ to be the set of edges in $T$ incident to $v$, and we have $\deg_T(v) = |T(v)| \leq \deg_{G[S]}(v)$.
    Then,
    \begin{align*}
        \abs*{\UN_Z(T)} = \sum_{v\in S} \abs*{\UN_{H(v)}(T(v))} - \sum_{v\in S} \sum_{\substack{v'\in S \\ v \neq v'}} \abs*{\UN_{H(v)}(T(v)) \cap \UN_{H(v')}(T(v'))} \mper
    \end{align*}
    Each summand in the first term of the right-hand side can be lower bounded using $P_H$ (see below).
    A vertex $\{v_1,v_2\} \in V(Z)$ (an edge in $G$) is contained in exactly two subgraphs $H(v_1)$ and $H(v_2)$, so it can only be counted twice in the second term, which means we can bound the whole sum by $2e(S)$.

    Since $|S| \leq \eps |V(G)|$, by the assumption on $G$, both the induced subgraph $G[S]$ and $T$ (viewed as a subgraph of $G[S]$) satisfy the assumption of \Cref{lem:most-low}, i.e., all $S' \subseteq S$ satisfies $2e_T(S') \leq 2e(S') \leq \gamma |S'|$.
    Thus, define $S_1 \coloneqq \{v\in S: \deg_T(v) \leq 2\gamma\}$ and $S_2 \coloneqq \{v\in S: \deg_{G[S]}(v) \leq 2\gamma\} \subseteq S_1$.
    Applying \Cref{lem:most-low} to both $T$ and $G[S]$, we get that $2|T| = 2e_T(S) \leq 3\sum_{v\in S_1} \deg_T(v)$ and
    $2e(S) \leq 3\sum_{v\in S_2} \deg_{G[S]}(v) \leq 6\gamma |S_2|\leq 6\gamma |S_1|$, hence
    \begin{align*}
        \abs*{\UN_Z(T)} &\ge \sum_{v\in S} |T(v)| \cdot P_H\parens*{|T(v)|} - 2e(S) \\
        &\ge \sum_{v\in S} |T(v)| \cdot P_H\parens*{|T(v)|} - 6\gamma |S_1| \\
        &= \sum_{v\in S_1} |T(v)| \cdot \parens*{P_H\parens*{|T(v)|} - \frac{6\gamma}{|T(v)|} } + \sum_{v\in S\setminus S_1} |T(v)| \cdot P_H\parens*{|T(v)|} \\
        &\ge \frac{1}{2} \sum_{v\in S_1} |T(v)| \cdot P_H\parens*{|T(v)|} \mcom
    \end{align*}
    where the last inequality is by $|T(v)| = \deg_T(v) \leq 2\gamma$ for all $v\in S_1$ and the assumption that $P_H(t) \geq 12\gamma/t$ for all $t\leq 2\gamma$.
    Then since $P_H(t)$ is monotonically decreasing with $t$ and $\sum_{v\in S_1} |T(v)| \geq \frac{2}{3}|T|$,
    \[
        \frac{\abs*{\UN_Z(T)}}{|T|} \ge \frac{1}{2|T|}\sum_{v\in S_1} |T(v)| \cdot P_H(2\gamma) \ge \frac{P_H(2\gamma)}{3} \mper \qedhere
    \]
\end{proof}
\section{Algebraic unique-neighbor expanders}
\label{sec:un-expanders}

We use a Ramanujan graph equipped with symmetries bestowed by its Cayley graph structure as our base spectral expander.
\begin{fact}[{Ramanujan graph construction \cite{LPS88,Mor94}}]    \label{fact:ramanujan-cayley}
    For every $D = p^r + 1$ where $p$ is prime and $r\in \N$, there is an infinite family of groups $(\Gamma_n)_{n\in\N}$ and a collection of generators $A\subseteq\Gamma_n$ closed under inversion where $|A| = D$ such that the Cayley graph $G \coloneqq \mathrm{Cay}(\Gamma_n, A)$ is a $D$-regular Ramanujan graph, i.e., it is a $\frac{2\sqrt{D-1}}{D}$-spectral expander.
\end{fact}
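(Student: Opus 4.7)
The plan is to use the Lubotzky--Phillips--Sarnak (LPS) construction for $D = p+1$ with $p$ an odd prime, together with Morgenstern's extension to general prime powers $D = p^r + 1$. The approach realizes each desired Cayley graph as a finite quotient of the infinite $D$-regular Bruhat--Tits tree by an arithmetic lattice, and transfers spectral bounds from the theory of automorphic forms.

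First I would construct the generators. In the LPS case, take the Hamilton quaternion algebra over $\mathbb{Q}$; by Jacobi's four-square theorem there are exactly $p+1$ integer quaternions of norm $p$ up to units, which descend to $p+1$ elements $A$ in $PGL_2(\mathbb{Q}_p)$ closed under inversion. These generate a free group acting simply transitively on the vertices of the $(p+1)$-regular Bruhat--Tits tree. To obtain finite graphs, for each auxiliary prime $q$ coprime to $p$, use strong approximation to reduce mod $q$, yielding Cayley graphs $\mathrm{Cay}(\Gamma_q, A)$ where $\Gamma_q$ is either $PSL_2(\mathbb{F}_q)$ or $PGL_2(\mathbb{F}_q)$ depending on whether $p$ is a quadratic residue mod $q$. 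Letting $q \to \infty$ gives the infinite family $(\Gamma_n)_{n \ge 1}$. Morgenstern's construction is analogous but over the function field $\mathbb{F}_{p^r}(t)$, using a suitable quaternion algebra ramified at two places; this removes the constraint $r = 1$.

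For the spectral bound, I would identify the nontrivial eigenvalues of these Cayley graphs with Hecke eigenvalues of automorphic forms on the quaternion algebra via the Ihara/Selberg correspondence between finite quotients of the Bruhat--Tits tree and spaces of automorphic functions. The Jacquet--Langlands correspondence transfers these to Hecke eigenvalues of holomorphic cusp forms of weight $2$ on classical modular curves, where Deligne's proof of the Weil conjectures provides the Ramanujan--Petersson bound $|a_p| \le 2\sqrt{p}$; in the function-field case, Drinfeld's theorem plays the same role. Normalizing the unnormalized eigenvalue bound $2\sqrt{D-1}$ by $D$ yields the claimed spectral radius $\frac{2\sqrt{D-1}}{D}$. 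The principal obstacle is not combinatorial but number-theoretic: the sharp spectral bound is essentially equivalent to Ramanujan--Petersson for automorphic forms on $GL_2$ (either over $\mathbb{Q}$ or over $\mathbb{F}_q(t)$), so its proof must invoke Deligne's resolution of the Weil conjectures or Drinfeld's function-field analogue. By contrast, the construction of generators, verification of regularity, and extraction of the Cayley structure are elementary once the quaternion framework is in place.
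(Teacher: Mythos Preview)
The paper does not prove this statement at all: it is recorded as a \emph{Fact} with a citation to \cite{LPS88,Mor94} and used as a black box. There is therefore no ``paper's own proof'' to compare against.

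Your sketch is a faithful high-level outline of what those cited references actually do: the LPS construction via Hamilton quaternions and Jacobi's four-square count for $D=p+1$, Morgenstern's function-field analogue for general prime powers, the realization of the graphs as finite quotients of the Bruhat--Tits tree, and the identification of nontrivial eigenvalues with Hecke eigenvalues bounded via Deligne (or Drinfeld in positive characteristic). This is the correct story and the correct division of labor between the elementary construction and the deep input. For the purposes of this paper, however, none of this is needed or expected; the authors simply invoke the existence of such families, and your proposal goes well beyond what the context calls for.
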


For arbitrary $D$, by deleting a few edges, we can get $D$-regular Cayley graphs that satisfy the expanding condition in \Cref{lem:line-analysis} with similar parameters as Ramanujan graphs.

\begin{lemma}[Expanding Cayley graphs of every degree] \label{lem:ramanujan-deleted}
    For every $D\in \N$, $D \geq 3$, there is an infinite family of groups $(\Gamma_n)_{n\in \N}$ and a collection of generators $A\subseteq \Gamma_n$ closed under inversion where $|A| = D$ such that the Cayley graph $G \coloneqq \mathrm{Cay}(\Gamma_n,A)$ is a $D$-regular graph such that
    \begin{equation*}
        2 e(S) \leq 3D |S| \Paren{\frac{2\sqrt{D-1}}{D} + \eps},
        \quad \forall S\subseteq V(G),\ |S| = \eps |V(G)|.
    \end{equation*}
\end{lemma}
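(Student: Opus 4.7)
The plan is to obtain a $D$-regular Cayley graph by starting from a Ramanujan Cayley graph of a slightly larger degree $D'$ (with $D' - 1$ a prime power, as required by \Cref{fact:ramanujan-cayley}) and removing a carefully chosen handful of generators. Since the statement allows a factor-of-$3$ slack, we have plenty of room for a modest gap between $D'$ and $D$.

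First, I would select $D' \in [D, 3D)$ such that $D' - 1$ is a prime power and $D' - D$ is even. Splitting into cases on the parity of $D$:
\begin{itemize}
\item If $D$ is even, Bertrand's postulate provides an odd prime $p$ with $D - 1 \le p < 2(D-1)$, and I set $D' = p + 1$. Then $D'$ is even, so $D' - D$ is even.
\item If $D$ is odd, I use the $p = 2$ branch of \Cref{fact:ramanujan-cayley} (Morgenstern's construction), taking $D' = 2^{\lceil \log_2(D-1) \rceil} + 1 \in [D, 2D-1]$. Then $D'$ is odd, so again $D' - D$ is even.
\end{itemize}

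Next, I would apply \Cref{fact:ramanujan-cayley} at degree $D'$ to obtain an infinite family of groups $(\Gamma_n)_{n \in \N}$ together with a generator set $A' \subseteq \Gamma_n$ with $|A'| = D'$, such that $G' \coloneqq \mathrm{Cay}(\Gamma_n, A')$ is $D'$-regular Ramanujan. I would then remove $(D' - D)/2$ non-self-inverse pairs $\{a, a^{-1}\}$ from $A'$ to obtain an inversion-closed set $A \subseteq \Gamma_n$ with $|A| = D$. The resulting $G \coloneqq \mathrm{Cay}(\Gamma_n, A)$ is $D$-regular and, because $A \subseteq A'$, is a spanning subgraph of $G'$.

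The final step is a direct application of \Cref{lem:EML-same-set} to the Ramanujan graph $G'$: for $|S| = \eps |V(G)|$,
\begin{equation*}
2 e_G(S) \le 2 e_{G'}(S) \le D' |S| \Paren{\frac{2\sqrt{D'-1}}{D'} + \eps} = 2\sqrt{D'-1}\, |S| + D' \eps |S|.
\end{equation*}
Using $D \le D' \le 3D$, one checks $2\sqrt{D' - 1} \le 2\sqrt{3D - 1} \le 6\sqrt{D-1}$ (valid for $D \ge 2$) and $D' \le 3D$, so the right-hand side is at most $3D|S| \bigl( \frac{2\sqrt{D-1}}{D} + \eps \bigr)$, matching the statement.

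The main technical obstacle is ensuring that the generator removal is compatible with inversion-closedness, which is why the parity of $D' - D$ has to be handled carefully. The parity split above guarantees $D' - D$ is always even, so we can always remove $(D' - D)/2$ non-involution pairs from $A'$; the LPS and Morgenstern constructions both provide generator sets with enough such pairs in the small range $D' - D = O(D)$ that this is not an issue.
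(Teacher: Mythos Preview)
Your approach is essentially the same as the paper's: choose $D'\in[D,3D)$ with $D'-1$ a prime power and $D'\equiv D\pmod 2$, take the degree-$D'$ Ramanujan Cayley graph from \Cref{fact:ramanujan-cayley}, delete generators to reach degree $D$, and bound $e(S)$ via \Cref{lem:EML-same-set} applied to the larger graph. The paper uses a power of $3$ (rather than a Bertrand prime) for even $D$ and bounds slightly differently via monotonicity of $x\mapsto 2\sqrt{x-1}/x$, but these are cosmetic.

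The one place where the paper is more careful than your proposal is the generator deletion. You assert that the LPS/Morgenstern generating sets contain at least $(D'-D)/2$ non-involution pairs, but this is not something \Cref{fact:ramanujan-cayley} guarantees, and verifying it would require digging into the specific constructions. The paper sidesteps this entirely: it removes non-involution pairs while any remain, and if only involutions are left it removes those one at a time. Since $D'-D$ is even and each deletion step removes either two elements (a pair) or one (an involution), with the parity preserved throughout, one always lands exactly at $D$. You should adopt this fallback rather than leave the availability of non-involution pairs as an unproven claim.
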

\begin{proof}
    For odd $D$, there exists an $r\in \N$ such that $D-1 \leq 2^r \leq 2(D-1)$, thus we set $D' = 2^r+1 \leq 2D$.
    For even $D$, there exists an $r\in \N$ such that $D-1 \leq 3^r \leq 3(D-1)$, thus we set $D' = 3^r+1 \leq 3D$.
    We construct the $D'$-regular graph $G' = \mathrm{Cay}(\Gamma_n,A')$ via \Cref{fact:ramanujan-cayley} such that $\lambda_2(G') \leq \frac{2\sqrt{D'-1}}{D'} \leq \frac{2\sqrt{D-1}}{D}$ (since $\frac{2\sqrt{x-1}}{x}$ is decreasing with $x$ for $x \geq 2$).

    Since $D$ and $D'$ have the same parity, we can remove pairs of generators $g \neq g^{-1}$ from $A'$ until there are $D$ elements left (if at some point only self-inverse elements remain, then we start removing them one at a time).
    Let $A$ be the remaining generators with $|A| = D$.
    By construction, $G \coloneqq \mathrm{Cay}(\Gamma_n,A)$ is $D$-regular.

    Now, we upper bound $e(S)$.
    Deleting edges can only decrease $e(S)$, so by \Cref{lem:EML-same-set},
    \begin{equation*}
        2 e(S) \leq D' |S| (\lambda_2(G') + \eps)
        \leq 3D |S| \Paren{\frac{2\sqrt{D-1}}{D} + \eps}.
    \end{equation*}
    This completes the proof.
\end{proof}


For the gadget, we will need a unique-neighbor expander with strong quantitative guarantees.
\begin{lemma}   \torestate{\label{lem:gadget}
    Let $\beta\in(0,1/2]$, $\theta > 0$, and $\tau > 0$ be constants.
    For integers $d_1, d_2, D_1, D_2$ and $D\coloneqq D_1 + D_2$ satisfying
    \begin{enumerate}
        \item $\frac{d_1}{d_2} = \frac{D_2}{D_1}$,
        \item $1 \ge \frac{d_1}{d_2} \ge \frac{\beta}{1-\beta}$,
        \item $\theta \sqrt{D}/2 \le d_1 + d_2 \le \theta \sqrt{D}$,
    \end{enumerate}
    there is a $(d_1,d_2)$-biregular graph $H$ with $D_1$ vertices on the left and $D_2$ vertices on the right such that:
    \begin{align*}
        P_H(t) \ge (1-o_D(1))\cdot d_1 \cdot \exp(-\theta t/\sqrt{D})
    \end{align*}
    for $1 \le t \le \tau\sqrt{D}$ where $o_D(1)$ hides constant factors depending only on $\beta, \theta$ and $\tau$.}
\end{lemma}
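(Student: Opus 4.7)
The plan is to prove the existence of the gadget probabilistically: a random $(d_1,d_2)$-biregular bipartite graph will have the desired unique-neighbor expansion profile with positive probability, and since $D$ is a constant in the outer application, such a gadget can then be found by brute-force search. The analysis proceeds in two stages: first handle the Erdős--Rényi analogue where edges are independent, then transfer the result to the biregular random model via stochastic domination.

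First I would analyze $\bH \sim \ER_{D_1, D_2, p}$ with edge probability $p = (d_1+d_2)/D$. The hypotheses $\frac{d_1}{d_2} = \frac{D_2}{D_1}$ and $d_1 D_1 = d_2 D_2$ imply $p = d_1/D_2 = d_2/D_1$, so the expected left/right degrees are exactly $d_1$ and $d_2$. For a fixed subset $S$ on either side of size $s$, each vertex on the opposite side is independently a unique neighbor with probability $s\,p\,(1-p)^{s-1}$, giving an expected count of at least $s\,d_1\,(1-p)^{s-1}$. Since the hypotheses give $p \le \theta/\sqrt{D}$ and we restrict to $s \le \tau\sqrt{D}$, we have $(1-p)^{s-1} \ge (1-o_D(1))\exp(-\theta s/\sqrt{D})$.

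Next, because the per-vertex indicators are independent (one per vertex on the opposite side), a Chernoff-type tail bound controls the deviation of the unique-neighbor count for each fixed $S$. A union bound over the at most $\binom{D_1}{s}+\binom{D_2}{s} \le 2 D^{s}$ subsets of size $s$, summed over $1 \le s \le \tau\sqrt{D}$, then shows that with probability $1-o_D(1)$ every small subset of $\bH$ simultaneously enjoys the desired lower bound $(1-o_D(1))\,d_1\,\exp(-\theta s/\sqrt{D})$ on its unique-neighbor count. This establishes the claimed profile for the Erdős--Rényi model (this is the content of the earlier \textsc{Lemma} \ref{lem:erdos-renyi-gadget} alluded to in the overview).

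Finally I would transfer this bound to $\bR \sim \Reg_{D_1, D_2, d_1, d_2}$ using the stochastic domination statement \Cref{lem:stoc-dom}, a mild generalization of the embedding theorem from \cite{FK16}, which couples biregular random graphs with Erdős--Rényi graphs of matching edge density. The main subtlety, and the step I expect to be the principal obstacle, is that unique-neighbor expansion is \emph{not} monotone under edge addition: adding an edge can destroy unique neighbors of a set. Hence one cannot simply embed one model into the other and invoke monotonicity; instead one uses that the embedding differs from the biregular sample on only a negligible fraction of edges and argues that this perturbation changes the unique-neighbor count of every small set by only a $o_D(1)$ factor relative to $d_1$. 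Combining this transfer with the Erdős--Rényi bound shows that $\bR$ satisfies the stated profile with positive probability, yielding the desired gadget $H$ after derandomization by exhaustive search over constant-sized graphs.
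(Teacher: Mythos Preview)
Your proposal is correct and follows essentially the same route as the paper: establish the profile for $\ER_{D_1,D_2,p}$ via Chernoff plus a union bound over size-$s$ subsets (this is precisely \Cref{lem:erdos-renyi-gadget}), then transfer to $\Reg_{D_1,D_2,d_1,d_2}$ via the coupling of \Cref{lem:stoc-dom}, handling the non-monotonicity of unique-neighbor expansion by bounding the perturbation. The paper makes the last step concrete by showing every vertex has degree $o(\sqrt{D})=o(d_1)$ in $\bR\setminus\bH$, whence $P_{\bR}(t)\ge P_{\bH}(t)-o(\sqrt{D})$; your phrasing in terms of a ``negligible fraction of edges'' should be sharpened to this per-vertex degree bound, but the idea is the same.
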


We defer the proof of \Cref{lem:gadget} to \Cref{sec:gadget}, and prove our main theorem below.

\begin{theorem}[Formal version of \Cref{thm:main}] \label{thm:main-thm}
    For every $\beta\in(0,1/2]$, there are $d_0 = d_0(\beta) > 0$ and $\delta = \delta(\beta) > 0$ such that for all even $d_1,d_2\ge d_0$ with $1\ge\frac{d_1}{d_2} \ge \frac{\beta}{1-\beta}$, there is an infinite family of $(2d_1, 2d_2)$-biregular graphs $(Z_n)_{n\ge 1}$ with
    \[
        \abs*{\UN_{Z_n}(S)} \ge \delta \cdot d_1\cdot |S|,
        \quad
        \forall S \subseteq V(Z_n)\text{ with }|S| \le \frac{1}{d_1^3} \cdot \abs*{V\parens*{Z_n}} \mper
        \numberthis \label{eq:UNE-Z}
    \]
\end{theorem}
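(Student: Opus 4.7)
The plan is to instantiate the line product of \Cref{sec:line-product} with a Cayley base graph and a biregular gadget, then invoke \Cref{lem:line-analysis}. For $\beta\in(0,1/2]$ and even $d_1,d_2 \ge d_0$, I fix a constant $\theta = \Theta(1/\beta)$ and choose $D$ so that $\theta\sqrt{D}/2 \le d_1+d_2 \le \theta\sqrt{D}$ and $(d_1+d_2) \mid D$. Set $D_1 = D d_2/(d_1+d_2)$ and $D_2 = D d_1/(d_1+d_2)$, both even positive integers with $D_2/D_1 = d_1/d_2$ and $D_1, D_2 = \Omega(D)$, placing the parameters in the feasible regime of \Cref{lem:gadget}. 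The base graph is $G_n = \mathrm{Cay}(\Gamma_n, A)$ from \Cref{lem:ramanujan-deleted}: a $D$-regular Cayley graph whose inverse-closed generating set $A$ of size $D$ satisfies $2e(S) \le \gamma |S|$ for $|S| \le \eps |V(G_n)|$ with $\gamma = O(\sqrt{D})$ upon setting $\eps = 1/\sqrt{D}$. The gadget $H$ is a $(d_1,d_2)$-biregular graph on $[D_1]\sqcup[D_2]$ produced by \Cref{lem:gadget}, satisfying $P_H(t) \ge (1-o_D(1))\, d_1 \exp(-\theta t/\sqrt{D})$ for $t \le \tau \sqrt{D}$.

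Because $D_1, D_2$ are even, I split $A = A_1 \sqcup A_2$ into inverse-closed subsets of sizes $D_1, D_2$ by pairing each non-self-inverse generator with its inverse and distributing. Define $Z_n = G_n \Lining H$ via the Cayley edge labeling: each edge $\{g, ga\}$ carries label $a$ at $g$ and $a^{-1}$ at $ga$, both lying in the same $A_i$ by inverse-closure, yielding a partition $E(G_n) = E_1 \sqcup E_2$. Under the identification $A_i \leftrightarrow [D_i]$, the bipartite structure of $H$ ensures that within each local copy $H(v)$ only $E_1$-edges (labeled in $[D_1]$) connect to $E_2$-edges (labeled in $[D_2]$), making $Z_n$ bipartite with parts $E_1, E_2$. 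Each $e \in E_1$ has two endpoints, each contributing $d_1$ neighbors through its gadget copy, giving degree $2d_1$; analogously $E_2$-edges have degree $2d_2$, producing the $(2d_1, 2d_2)$-biregular graph required. Left multiplication by $\Gamma_n$ preserves edge labels and hence the whole construction, furnishing the algebraic action of \Cref{thm:main} as a bonus.

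To apply \Cref{lem:line-analysis}, I verify its two hypotheses. The first follows from \Cref{lem:ramanujan-deleted} with the chosen $\eps$ and $\gamma$. For the second, $P_H(t) \ge 12\gamma/t$ on $1 \le t \le 2\gamma$: choosing $\tau$ large guarantees $2\gamma \le \tau \sqrt{D}$, and the inequality then reduces to $t \cdot d_1 \cdot \exp(-\theta t/\sqrt{D}) \ge \Omega(\gamma)$ on $[1, 2\gamma]$, whose binding case is $t=1$ and requires $d_1 \ge \Omega(\gamma) = \Omega(\sqrt{D})$, which holds because $d_1 \ge \beta\theta\sqrt{D}/2$ for $\theta = \Theta(1/\beta)$ chosen large enough. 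The conclusion yields $|\UN_{Z_n}(S)|/|S| \ge P_H(2\gamma)/3 \ge \delta d_1$ for some $\delta = \delta(\beta) > 0$ and every $|S| \le (\eps/D)|V(Z_n)| = |V(Z_n)|/D^{3/2}$. Since $d_1 + d_2 \le d_1/\beta$ gives $D = O(d_1^2/(\beta\theta)^2) \le d_1^2$ for $\theta \ge 2/\beta$, this threshold covers $|S| \le |V(Z_n)|/d_1^3$, matching \Cref{eq:UNE-Z}.

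The main obstacle is the competing constraints on $\theta$: it must be large enough that $d_1 \ge \Omega(\gamma)$ (so that the gadget condition at $t=1$ is nontrivial), yet bounded in terms of $\beta$ alone so that the exponential decay $\exp(-\theta t/\sqrt{D})$ at $t = \Theta(\sqrt{D})$ does not erode the gadget's expansion to a sub-constant factor of $d_1$. Taking $\theta = \Theta(1/\beta)$ resolves this tension, and the $o_D(1)$ slack in \Cref{lem:gadget} is absorbed by making $d_0$ (hence $D$) large enough.
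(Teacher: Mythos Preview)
Your proposal is correct and follows essentially the same route as the paper: instantiate the line product with a near-Ramanujan Cayley base graph (\Cref{lem:ramanujan-deleted}) and a biregular gadget (\Cref{lem:gadget}), verify the two hypotheses of \Cref{lem:line-analysis} with $\gamma = \Theta(\sqrt{D})$ and $\eps = 1/\sqrt{D}$, and derive $(2d_1,2d_2)$-biregularity from an inverse-closed partition of the Cayley generating set. The one point you gloss over is that the hypothesis $P_H(t) \ge 12\gamma/t$ is \emph{not} automatically tightest at $t=1$: the function $t\mapsto t\exp(-\theta t/\sqrt{D})$ is unimodal, and the other endpoint $t = 2\gamma = \Theta(\sqrt{D})$ imposes the separate constraint $d_1 \gtrsim e^{\Theta(\theta)}$, which the paper checks explicitly (via the $\min\{ae^{-a},be^{-b}\}$ bound) and which is what ultimately forces $d_0$ to be exponential in $1/\beta$.
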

\begin{proof}
    The construction of $Z_n$ is based on taking the line product of $G_n$ from \Cref{fact:ramanujan-cayley} and a bipartite gadget graph $H$ from \Cref{lem:gadget} for suitably chosen parameters.

    Fix parameters $\tau = 18$, $\theta = \frac{40\tau}{\beta}$, and choose $d_0$ to be large enough such that $d_0 \geq \frac{8}{\beta} e^{\tau\theta}$ and such that for $D \geq 4d_0^2/\theta^2$, the $o_D(1)$ terms in \Cref{lem:gadget} and all subsequent occurrences in this proof are smaller than $0.1$.
    For $d_1,d_2 \ge d_0$, let $D_1 \coloneqq \ceil*{\frac{d_1+d_2}{\theta^2}}\cdot d_2$ and $D_2 \coloneqq \ceil*{\frac{d_1+d_2}{\theta^2}}\cdot d_1$, and define $D\coloneqq D_1 + D_2$.
    This choice of parameters satisfies the requirements of \Cref{lem:gadget}, and hence there is a $(d_1,d_2)$-biregular graph $H$ with $D_1$ left vertices, $D_2$ right vertices, and
    \[
        P_H(t) \ge 0.9 \cdot d_1 \cdot \exp(-\theta t/\sqrt{D})
        \numberthis \label{eq:PH-lower-bound}
    \]
    for $t \le \tau\sqrt{D}$.

    We choose $G_n$ as an $n$-vertex $D$-regular expander graph from \Cref{lem:ramanujan-deleted}.
    It remains to prove that $Z_n = G_n \Lining H$ has the claimed unique-neighbor expansion guarantee, and is indeed a $(2d_1,2d_2)$-biregular graph (which requires an appropriate ordering of $H$).

    Note that by \Cref{lem:ramanujan-deleted}, $G_n$ satisfies
    $2e(S) \leq 9\sqrt{D} \cdot |S|$ for all $S\subseteq V(G_n)$ with $|S| \leq \frac{1}{\sqrt{D}} n$, which satisfies the first condition in \Cref{lem:line-analysis} with $\eps = \frac{1}{\sqrt{D}}$ and $\gamma = 9\sqrt{D} = \frac{\tau}{2}\sqrt{D}$.
    Thus, it suffices to show that $P_H(t)$ satisfies the second condition in \Cref{lem:line-analysis} (a weaker lower bound):
    \begin{equation*}
        (\ref{eq:PH-lower-bound})
        \ge \frac{12\gamma}{t} = \frac{6\tau\sqrt{D}}{t},
        \quad \textnormal{ for } 1 \leq t \leq 2\gamma = \tau\sqrt{D} \mper
        \numberthis \label{eq:PH-condition}
    \end{equation*}
    This allows us to apply \Cref{lem:line-analysis} and the (stronger) lower  bound of $P_H(t)$ in \Cref{eq:PH-lower-bound} to get
    \begin{equation*}
        P_{Z_n}\Paren{\frac{1}{D^{3/2}}\cdot |V(Z_n)|} \geq \frac{1}{3} P_H(\tau\sqrt{D})
        \geq 0.3 e^{-\tau\theta} \cdot d_1
        \coloneqq \delta \cdot d_1
    \end{equation*}
    for constant $\delta = 0.3e^{-\tau\theta}$.
    Since $D \leq \frac{4}{\theta^2}(d_1+d_2)^2$, $\beta(d_1+d_2) \leq d_1$,  and $\beta\theta = 40\tau$, we have $\frac{1}{D^{3/2}} \geq \frac{1}{d_1^3}$.
    As $P_{Z_n}(t)$ is a decreasing function with $t$, this establishes the desired unique-neighbor expansion as articulated in \Cref{eq:UNE-Z}, finishing the proof of the theorem.

    Now, to establish \Cref{eq:PH-condition}, observe that the function $x e^{-x}$ is monotone increasing for $x \leq 1$ and monotone decreasing for $x \geq 1$, hence for $x\in[a,b]$, $x e^{-x} \geq \min\{a e^{-a}, b e^{-b}\}$.
    Thus, for $1 \leq t \leq \tau\sqrt{D}$,
    \begin{equation*}
        \frac{\theta t}{\sqrt{D}} \cdot e^{-\theta t/\sqrt{D}} \geq \min \Set{ \frac{\theta}{\sqrt{D}} \cdot e^{-\theta/\sqrt{D}},\ \tau\theta \cdot e^{-\tau\theta} }.
    \end{equation*}
    By using $d_1 \geq \beta(d_1+d_2)$, $d_1 + d_2 \geq \theta\sqrt{D} /2$ and the above, from \Cref{eq:PH-lower-bound} we get $P_H(t) \geq \frac{\sqrt{D}}{t} \cdot 0.45 \beta \theta \cdot \min\set{ e^{-\theta/\sqrt{D}}, \tau\sqrt{D} \cdot e^{-\tau\theta} }$.

    With our choice of parameters, $\beta \theta \geq 40\tau$ and $\sqrt{D} \geq 2d_0/\theta \geq \theta$ imply that $0.45\beta \theta \cdot e^{-\theta/\sqrt{D}} \geq 6\tau$.
    Furthermore, $\theta\sqrt{D} \geq d_1 + d_2 \geq 2d_0 \geq \frac{16}{\beta}e^{\tau\theta}$ implies that $0.45\beta \theta \cdot \tau\sqrt{D} e^{-\tau\theta} \geq 6\tau$.
    Therefore, we have established \Cref{eq:PH-condition}.

    Finally, we show that $Z_n$ is a $(2d_1, 2d_2)$-biregular graph.
    Since $G_n = \mathrm{Cay}(\Gamma,A)$ is a Cayley graph with generators $A$, each edge $\{u,v\}$ is labeled by group elements $a$ and $a^{-1}$ in $A$, i.e., $\{u,v\} = e_u^a = e_v^{a^{-1}}$.
    To construct the line product $G_n \Lining H$, we need a bijective map $\varphi$ between $A$ and $V(H) = L(H) \cup R(H)$ such that
    each pair $a, a^{-1} \in A$ gets assigned to the same side of $H$.
    This can be done as long as $d_1$ and $d_2$ are even.

    Let $L(Z_n) = \{e_v^a: v\in V(G_n),\ \varphi(a) \in L(H)\}$ and $R(Z_n) = \{e_v^a: v\in V(G_n),\ \varphi(a) \in R(H)\}$.
    First, $L(Z_n)$ and $R(Z_n)$ is a disjoint partition of $E(G_n) = V(Z_n)$ since $a, a^{-1}$ are assigned to the same side of $H$.
    Moreover, all edges of $Z_n$ are between $L(Z_n)$ and $R(Z_n)$, establishing bipartiteness of $Z_n$.
    Finally, observe that the degree of $e \in V(Z_n)$, an edge between $u$ and $v$, is $d_1$ in both $H(u)$ and $H(v)$ if $e\in L(Z_n)$, and $d_2$ in both if $e\in R(Z_n)$, which implies $(2d_1,2d_2)$-biregularity.
\end{proof}
\section{Non-backtracking matrix of subgraphs}  \label{sec:nb-specrad}

In this section, we bound the spectral radius of the nonbacktracking matrix of subgraphs of bipartite spectral expanders.
This gives us tight control over the degree profile of subgraphs, improving on bounds provided by classic tools like the expander mixing lemma~\cite{AC88}.


The following is a generalization of an analogous result of Kahale for regular graphs \cite[Theorem 1]{Kah95}.

\begin{theorem}[Formal version of \Cref{thm:subgraph-spectral-radius-main}] \label{thm:biregular-subgraph}
    Let $\eps \in (0,0.1)$, and let $3 \leq c \leq d$ be integers.
    Let $G = (L \cup R, E)$ be a $(c,d)$-biregular graph and $S \subseteq L \cup R$ such that $|S| \leq d^{-1/\eps} |L\cup R|$.
    Then, for any $t \geq 0$ such that
    \begin{equation*}
        \frac{1}{t} \geq \frac{1}{2} \Paren{\sqrt{\wt{\lambda}^2 - (\sqrt{c-1}+\sqrt{d-1})^2} + \sqrt{\wt{\lambda}^2 - (\sqrt{c-1}-\sqrt{d-1})^2} } \mcom
        \numberthis \label{eq:t-condition}
    \end{equation*}
    where $\wt{\lambda} = \max(\lambda_2(A_G), \sqrt{c-1}+\sqrt{d-1}) \cdot (1+O(\eps))$, we have
    \begin{equation*}
        H_{G[S]}(t) \succ 0 \mper
    \end{equation*}
\end{theorem}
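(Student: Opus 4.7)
The plan is to invoke \Cref{lem:spectral-radius} and reduce to showing $f^\top H_{G[S]}(t) f > 0$ for every nonzero $f \in \R^S$ and every $t$ meeting \eqref{eq:t-condition}. Following Kahale's strategy for regular graphs, I would extend $f$ from $S$ to a carefully chosen function on a graph that locally models $G$ and then bound the resulting quadratic form via the spectrum of $A_G$.

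First I would construct a \emph{regular tree extension} $T$ of $G[S]$ by attaching, to each vertex $v \in S$, a tree of depth $\ell \asymp 1/\eps$ so that every non-leaf left vertex of $T$ has degree $c$ and every non-leaf right vertex has degree $d$. The hypothesis $|S| \leq d^{-1/\eps}|L \cup R|$ lets me embed $T$ vertex-disjointly into $G$ as a forest rooted at $S$ by a greedy argument: the embedded tree covers at most $|S| d^{\ell+1} \ll |V(G)|$ vertices, so fresh extensions are always available.

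Next I would extend $f$ to $\widetilde f_t : V(T) \to \R$ using two decay parameters $\alpha_c(t), \alpha_d(t)$: a tree child of a degree-$c$ parent is assigned $\alpha_c$ times the parent's value, and a child of a degree-$d$ parent is assigned $\alpha_d$ times. The coefficients are fixed by requiring the Bethe Hessian recurrence at each non-leaf tree vertex to vanish, producing a coupled pair of quadratics in $\alpha_c, \alpha_d$ whose discriminants are exactly $\wt\lambda^2 - (\sqrt{c-1} \pm \sqrt{d-1})^2$. The condition \eqref{eq:t-condition} is precisely what guarantees real solutions with $|\alpha_c \alpha_d| < 1$, making the extension decay geometrically, so that $f^\top H_{G[S]}(t) f$ equals $\widetilde f_t^\top H_T(t) \widetilde f_t$ up to a leaf boundary term of order $(|\alpha_c \alpha_d|)^\ell$, negligible for $\ell$ large. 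Since the embedding preserves local degrees away from the leaves, $\widetilde f_t^\top H_T(t) \widetilde f_t$ agrees with $\widetilde f_t^\top H_G(t) \widetilde f_t$ up to another leaf correction of the same order.

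The global quadratic form $\widetilde f_t^\top H_G(t) \widetilde f_t$ I would control by decomposing $\widetilde f_t$ along the top eigenvector of $A_G$ and its orthogonal complement: the orthogonal part contributes non-negatively because \eqref{eq:t-condition} with $\wt\lambda \geq \lambda_2(A_G)$ ensures $H_G(t) \succeq 0$ on that subspace, while the top-eigenvector component has squared norm $O(|S| d^\ell / |V(G)|) = d^{-\Omega(1/\eps)}$, which is absorbed into the $(1+O(\eps))$ slack in $\wt\lambda$. The main obstacle is the algebra in the harmonic extension step: setting up the coupled decay equations, solving them, and verifying that the critical threshold for real decaying solutions is exactly \eqref{eq:t-condition}. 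This threshold reflects the spectral edge $\sqrt{c-1}+\sqrt{d-1}$ of the infinite $(c,d)$-biregular tree and delivers the sharp bound $((c-1)(d-1))^{1/4}$ on the non-backtracking radius of $G[S]$ in the Ramanujan regime, the biregular counterpart of Kahale's $\sqrt{d-1}$ bound.
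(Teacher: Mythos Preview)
Your high-level strategy matches the paper's---regular tree extension of $G[S]$, a function extension that annihilates the Bethe Hessian on the attached trees, then a spectral bound via $A_G$---but two steps do not work as written. First, you cannot embed $T$ vertex-disjointly into $G$: the theorem carries no girth hypothesis, so the $\ell$-ball around a vertex of $S$ need not be a tree, and balls around distinct vertices of $S$ may overlap; your greedy argument fails precisely when a neighbor needed as a fresh child already lies in the image. The paper instead uses a non-injective folding homomorphism $\sigma:T\to G$ and sets $\wt{f}_t(v)=\bigl(\sum_{\sigma(x)=v}f_t(x)^2\bigr)^{1/2}$. One then only needs the one-sided inequality $\langle f_t, A_T f_t\rangle \le \langle \wt{f}_t, A_G \wt{f}_t\rangle$, obtained by Cauchy--Schwarz using that distinct $T$-edges mapping to the same $G$-edge are vertex-disjoint.

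Second, your account of where \eqref{eq:t-condition} enters is misplaced. The Bethe Hessian recurrence $((\deg(x)-1)t^2+1)f_t(x)=t\sum_{y\sim x}f_t(y)$ on the tree is solved by the \emph{single} decay rate $\alpha_c=\alpha_d=t$ at every degree---this is exactly the identity $(H_T(t)f_t)(x)=0$ for $x\notin S$ in \Cref{lem:H-TG}---and $\wt{\lambda}$ does not appear in that recurrence at all. The discriminants you wrote are those of the \emph{adjacency} eigenfunction equation on the infinite biregular tree, a different object. In the paper, \eqref{eq:t-condition} arises only at the final step: after folding and bounding $\langle g,A_G g\rangle\le\lambda(1+\eps)\|g\|^2$ with $g=\Gamma_G^{1/2}\wt{f}_t$ and $\Gamma_G=t^2(D_G-\Id)+\Id$, positivity of $\langle f, H_{G[S]}(t)f\rangle$ reduces to $t^{-4}-t^{-2}\bigl(\lambda^2(1+O(\eps))-(c-1)-(d-1)\bigr)+(c-1)(d-1)>0$, and \eqref{eq:t-condition} is precisely the root condition for this quadratic in $t^{-2}$.
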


To understand \Cref{eq:t-condition}, consider $\wt{\lambda} \approx \sqrt{c-1} + \sqrt{d-1}$.
Then, \Cref{eq:t-condition} simplifies to $\frac{1}{t} \gtrsim ((c-1)(d-1))^{1/4}$.
More specifically, suppose $\wt{\lambda} = (\sqrt{c-1} + \sqrt{d-1}) (1+\gamma/d)$, then denoting $\eta \coloneqq (\sqrt{c-1}+\sqrt{d-1})^2 (\frac{2\gamma}{d} + \frac{\gamma^2}{d^2})$, \Cref{eq:t-condition} becomes $\frac{1}{t} \geq \sqrt{\sqrt{(c-1)(d-1)} + \eta/4} + \frac{1}{2}\sqrt{\eta}$.
In particular, we have the following corollary.
    
\begin{corollary} \label{cor:t-condition-simplified}
    Let $3 \leq c \leq d$, and suppose $\wt{\lambda} = (\sqrt{c-1} + \sqrt{d-1}) (1+\gamma/d)$ for $\gamma \in [0, 2]$, then
    \begin{equation*}
        \frac{1}{t^2} \geq \sqrt{(c-1)(d-1)} \cdot (1 + 3\sqrt{\gamma})
    \end{equation*}
    implies \Cref{eq:t-condition}.
\end{corollary}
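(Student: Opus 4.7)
The proof is a purely computational simplification of \Cref{eq:t-condition} after substituting the given form of $\wt{\lambda}$. I introduce shorthand $a = \sqrt{c-1}$ and $b = \sqrt{d-1}$, so that $ab = \sqrt{(c-1)(d-1)}$ and $\wt{\lambda} = (a+b)(1+\gamma/d)$. Direct expansion yields
\[
\eta \;\coloneqq\; \wt{\lambda}^2 - (a+b)^2 \;=\; (a+b)^2\bigl(2\gamma/d + \gamma^2/d^2\bigr)
\]
and, using $(a+b)^2 - (a-b)^2 = 4ab$, also $\wt{\lambda}^2 - (a-b)^2 = \eta + 4ab$. Substituting these into \Cref{eq:t-condition} and squaring, the condition becomes
\[
\tfrac{1}{t^2} \;\geq\; ab + \tfrac{\eta}{2} + \tfrac{1}{2}\sqrt{\eta^2 + 4ab\eta}.
\]

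Next, I would apply the elementary subadditivity $\sqrt{u+v} \leq \sqrt{u}+\sqrt{v}$ to bound $\sqrt{\eta^2 + 4ab\eta} \leq \eta + 2\sqrt{ab\eta}$. With this, it suffices to show that the hypothesis $\tfrac{1}{t^2} \geq ab(1 + 3\sqrt{\gamma})$ implies the scalar inequality
\[
3ab\sqrt{\gamma} \;\geq\; \eta + \sqrt{ab\eta}.
\]
Viewed as a quadratic in $y = \sqrt{\eta/ab}$, this rearranges to $y \leq \tfrac{1}{2}\bigl(\sqrt{1+12\sqrt{\gamma}}-1\bigr)$, so it suffices to establish the bound
\[
\eta/(ab) \;\leq\; \tfrac{1}{4}\bigl(\sqrt{1+12\sqrt{\gamma}}-1\bigr)^2.
\]

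The final step is to verify this bound on $\eta/(ab)$ using the structural hypotheses. Expanding $(a+b)^2 = (c+d-2) + 2ab$ and using $c \leq d$ gives $(a+b)^2 \leq 4(d-1)$, so
\(
\eta/(ab) \leq \bigl((c+d-2)/ab + 2\bigr) \cdot \bigl(2\gamma/d + \gamma^2/d^2\bigr).
\)
Combined with $ab \geq 2$ (from $c,d \geq 3$, i.e., $a,b \geq \sqrt{2}$) and $\gamma \leq 2$, $d \geq 3$, this expression matches the target form with constants that can be tracked explicitly.

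The main obstacle is pure bookkeeping: carefully tracking constants so that the final multiplicative factor lands at exactly $1 + 3\sqrt{\gamma}$ rather than some larger $1 + O(\sqrt{\gamma})$. The bound is most transparent in the near-Ramanujan regime $\gamma/d \ll 1$, where $\eta \approx 8ab\gamma/d$ and the required inequality reduces to $\sqrt{8\gamma/d} \leq 3\sqrt{\gamma}$, which holds trivially for $d \geq 1$. No deeper insight is required beyond these elementary manipulations.
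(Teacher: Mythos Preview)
Your computational approach---substitute $\wt{\lambda}=(a+b)(1+\gamma/d)$, square \eqref{eq:t-condition}, and reduce to an inequality in $\eta=(a+b)^2(2\gamma/d+\gamma^2/d^2)$---is exactly the argument the paper sketches in the paragraph preceding the corollary (which is all the proof the paper provides). Your derivation of the squared form $\tfrac{1}{t^2}\ge ab+\tfrac{\eta}{2}+\tfrac{1}{2}\sqrt{\eta^2+4ab\eta}$ and the subsequent reduction via subadditivity to $3ab\sqrt{\gamma}\ge\eta+\sqrt{ab\eta}$ are correct.

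The gap is your assertion that what remains is ``pure bookkeeping'' that will land at the constant $3$. It will not: the stated constant is insufficient over the full range $\gamma\in[0,2]$. Take $c=d=3$ and $\gamma=2$. Then $ab=2$, $\wt{\lambda}=\tfrac{10\sqrt{2}}{3}$, and \eqref{eq:t-condition} becomes $\tfrac{1}{t}\ge 3\sqrt{2}$, i.e.\ $\tfrac{1}{t^2}\ge 18$; yet the corollary's hypothesis only asserts $\tfrac{1}{t^2}\ge 2(1+3\sqrt{2})\approx 10.49$. So the implication fails outright---not because of slack introduced by your subadditivity step, but already for the exact squared condition. (One can check the statement does hold for $\gamma\lesssim 0.4$; your heuristic $\eta/(ab)\approx 8\gamma/d$ is accurate precisely in that small-$\gamma$ regime.) For the paper's downstream applications only a bound of the form $1+O(\sqrt{\gamma})$ is ever used, so this is a constant-tracking slip in the corollary statement rather than a flaw in your method; but the bookkeeping you defer cannot be completed to yield the constant $3$ as written.
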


\parhead{Overview of the proof.}
We prove $H_{G[S]}(t) \succ 0$ by showing that $\iprod{f, H_{G[S]}(t) f} > 0$ for all $f: S\to \R$.
The way we prove $\iprod{f, H_{G[S]}(t) f} > 0$ is by relating it to a quadratic form against the matrix $H_G(t)$, which we can control via the spectrum of $G$.
In particular, we consider the depth-$\ell$ regular tree extension $T$ of $G[S]$, and for $f$ we define an appropriate function extension $f_t$ on the tree depending on $t$ (\Cref{def:function-extension}) such that $\iprod{f, H_{G[S]}(t) f} = \iprod{f_t, H_{T}(t) f_t}$.
The function $f_t$ additionally has the property that its $\ell_2$ mass on vertices $r$-far from $G[S]$ decays exponentially in $r$. 
At a high level, we use the tree extension as a proxy for the $\ell$-step neighborhood of $S$ in $G$, and this is made precise in \Cref{sec:folding-tree-extension} as we define a natural folded function $\wt{f}_t$ of $f_t$ into $G$ (\Cref{def:folded-function}).

This allows us to lower bound $\iprod{f, H_{G[S]}(t) f}$ by $\iprod{\wt{f}_t, H_{G}(t) \wt{f}_t}$ with some errors.
The errors can be bounded using the decay of $f_t$ from the definition, though this requires $t < ((c-1)(d-1))^{-1/4}$ (see \Cref{lem:function-extension-decay}).
Ignoring errors, it comes down to showing that
\begin{equation*}
    \frac{1}{t^4} - \frac{1}{t^2} \Paren{\lambda^2 - (c-1) - (d-1)} + (c-1)(d-1) > 0 \mcom
\end{equation*}
and we solve the quadratic formula in \Cref{lem:lambda-bound} and show that the above gives rise to \Cref{eq:t-condition}.
The full proof is presented in \Cref{sec:proof-of-spectral-radius}.

\subsection{Tree extensions}

We start with defining tree extensions of a graph.

\begin{definition}[Tree extension] \label{def:tree-extension}
    For a graph $G = (V, E)$, we say that $T = (V(T), E(T))$ is a tree extension of $G$ if $T$ is obtained by attaching a tree $T_r$ to each vertex $r \in V$, with $r$ being the root.
    Each vertex $x\in T$ belongs to a unique tree $T_r$ rooted at $r$.
    For any $x\in T$, we write $\depth(x)$ to be the distance between $x$ and the root of the tree containing $x$.

    Fix a tree extension $T$ of $G$, for functions $f, g: V(T) \to \R$, define $\iprod{f,g} = \sum_{x\in T} f(x) g(x)$ and $\normt{f}^2 = \sum_{x\in T} f(x)^2$.
\end{definition}

\begin{definition}[Function extension] \label{def:function-extension}
    Given a function $f : V(G) \to \R$, a tree extension $T$ of $G$, and parameter $t \in \R$, we define $f_{t}: V(T) \to \R$ to be the \emph{extension} of $f$ to $T$ such that for $x\in T$,
    \begin{equation*}
        f_{t}(x) = f(r) \cdot t^{\depth(x)},\quad \text{if } x\in T_r \mper
        \numberthis \label{eq:f-extension}
    \end{equation*}
\end{definition}

The following simple but crucial lemma establishes a relationship between $H_G$ and $H_{T}$, which also motivates the definition of $f_t$.

\begin{lemma} \label{lem:H-TG}
    Let $G$ be a graph and $T$ be any tree extension of $G$.
    Then, for any $t\in \R$ and $f: V(G) \to \R$,
    the extension $f_t: V(T) \to \R$ defined in \Cref{eq:f-extension} satisfies
    \begin{equation*}
        \Paren{H_{T}(t) \cdot f_t}(x) = 
        \begin{cases}
            (H_G(t) \cdot f)(x) & x\in V(G) \mcom \\
            0 & x \notin V(G) \mper
        \end{cases}
    \end{equation*}
\end{lemma}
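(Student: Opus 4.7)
The plan is to verify the identity by direct computation at each vertex $x \in V(T)$, splitting into three cases based on where $x$ sits in the tree extension: a root (equivalently, $x \in V(G)$), an internal tree vertex of positive depth, or a leaf of one of the attached trees. In each case I will expand
\[
(H_T(t) f_t)(x) = (\deg_T(x) - 1)\, t^2 f_t(x) - t\!\!\sum_{y \sim_T x}\! f_t(y) + f_t(x)
\]
using the explicit formula $f_t(y) = f(r)\, t^{\depth(y)}$ for $y \in T_r$.

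For a root $r \in V(G)$, the neighbors of $r$ in $T$ split into (i) the $G$-neighbors of $r$, each of which is itself a root, contributing $f_t(y) = f(y)$, and (ii) the $\deg_T(r) - \deg_G(r)$ tree-children of $r$ at depth one, each contributing $f_t = f(r)\, t$. Substituting into the display and collecting the $t^2 f(r)$ terms, the $\deg_T(r)$ pieces from the degree factor and from the tree-children sum cancel exactly, leaving $(\deg_G(r)-1) t^2 f(r) - t \sum_{y \sim_G r} f(y) + f(r) = (H_G(t) f)(r)$. For an internal tree vertex $x$ at depth $d \geq 1$ in some $T_r$, its unique parent contributes $f(r)\, t^{d-1}$ and its $\deg_T(x) - 1$ children each contribute $f(r)\, t^{d+1}$; the parent contribution cancels the $+f_t(x) = f(r)\, t^d$ term, and the child contributions cancel the $(\deg_T(x)-1) t^2 f_t(x)$ term, yielding zero. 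For a leaf, $\deg_T(x) = 1$ so the degree term vanishes, and the single parent contribution cancels the $f_t(x)$ term, again giving zero.

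There is no real obstacle here; the lemma is essentially a definitional check and the whole point of the geometric extension $f_t(x) = f(r)\, t^{\depth(x)}$ is precisely to make $f_t$ lie in the kernel of $H_T(t)$ on every non-root vertex. The only thing to be slightly careful about is the bookkeeping at the roots, where the identification of the root of $T_r$ with $r \in V(G)$ means $\deg_T(r)$ is $\deg_G(r)$ plus the number of depth-one tree-children, and one must verify that the ``extra'' degree contribution cancels against the ``extra'' neighbor sum from those children. Once this is observed, the computation is a one-line cancellation in each case.
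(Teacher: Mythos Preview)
Your proposal is correct and essentially identical to the paper's own proof: both are direct computations of $(H_T(t) f_t)(x)$ at each vertex, splitting into the cases $x\in V(G)$ and $x\notin V(G)$, with the same cancellations you describe. The only cosmetic difference is that you further split the non-root case into internal vertices and leaves, whereas the paper handles them uniformly (a leaf has $\deg_T(x)=1$, so the ``$d(x)-1$ children'' count is zero and the general computation still applies).
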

\begin{proof}
    Recall that $H_G(t) = (D_G-\Id) t^2 - A_G t + \Id$.
    For $x \notin V(G)$, let $d(x)$ be its degree and let $r\in V(G)$ be the root of the tree containing $x$.
    Observe that $x$ has 1 parent (with value $f(r) t^{\depth(x)-1}$) and $d(x)-1$ children (with value $f(r) t^{\depth(x)+1}$) in the tree $T_r$.
    Thus,
    \begin{align*}
        \Paren{H_{T}(t) \cdot f_t}(x) &= ((d(x)-1)t^2 + 1) \cdot f(r) t^{\depth(x)} - t\cdot f(r) \Paren{t^{\depth(x)-1} + (d(x)-1) t^{\depth(x) + 1}} \\
        &= 0 \mper
    \end{align*}
    For $x\in V(G)$, let $d_G(x)$ be its degree in $G$ and $d_T(x)$ be its degree in $T$.
    Then, $x$ has $d_T(x) - d_G(x)$ children (with value $t \cdot f(x)$) in the tree $T_x$.
    \begin{align*}
        \Paren{H_{T}(t) \cdot f_t}(x)
        &= ((d_T(x)-1)t^2 + 1) \cdot f(x) - t \Bigparen{ (A_G f)(x) + (d_T(x)-d_G(x)) \cdot t f(x)} \\
        &= ((d_G(x)-1)t^2 + 1) \cdot f(x) - t (A_G f)(x) \\
        &= (H_G(t) \cdot f)(x) \mper
    \end{align*}
    This completes the proof.
\end{proof}


\subsection{Regular tree extensions of subgraphs}

For a subgraph $G[S]$ in a regular (or biregular) graph, we consider its regular tree extension.

\begin{definition}[Regular tree extension] \label{def:regular-tree-extension}
    Let $G = (V, E)$ be a $d$-regular graph, $S\subseteq V$, $\ell \in \N$, and consider the induced subgraph $G[S]$.
    We define the \emph{depth-$\ell$ regular tree extension} of $G[S]$ to be the tree extension $T$ of $G[S]$ where depth-$\ell$ trees are attached to vertices in $S$ such that the resulting graph is $d$-regular except for the leaves.
    Let $\Leaves(T)$ denote the set of leaves.

    Similarly, let $G = (L \cup R, E)$ be a $(c,d)$-biregular graph, $S \subseteq L \cup R$, and $\ell \in \N$.
    The depth-$\ell$ regular tree extension of $G[S]$ is the tree extension such that the resulting graph is $(c,d)$-biregular except for the leaves.
\end{definition}

We show that given a graph $G = (V,E)$ and $S \subseteq V$, for any function $f: S \to \R$ and its extension $f_t$ to the depth-$\ell$ regular tree extension of $G[S]$, the contribution from the leaves decays exponentially with $\ell$ when $t < ((c-1)(d-1))^{-1/4}$.

\begin{lemma}[Decay of $f_t$] \label{lem:function-extension-decay}
    Let $G = (L \cup R, E)$ be a $(c,d)$-biregular graph with $c\leq d$,
    let $S \subseteq L \cup R$, let $\ell \in \N$ be even, and let $T$ be the depth-$\ell$ regular tree extension of $G[S]$.
    Moreover, let $t\in \R$  such that $t^2 \sqrt{(c-1)(d-1)} = 1-\delta$ for some $\delta \in (0,1)$.
    Given any function $f: S \to \R$, let $f_t: V(T) \to \R$ be the function extension (as defined in \Cref{eq:f-extension}), and let $f_t^{=\ell}$ be $f_t$ restricted to the leaves of $T$. Then,
    \begin{equation*}
        \Normt{f_t^{=\ell}}^2 \leq \frac{2\delta}{e^{\delta \ell} - 1}\cdot \Normt{f_t}^2 \mper
    \end{equation*}
    Since the function $\frac{2x}{e^{x\ell} - 1}$ is monotone decreasing, we have for any $\delta' > \delta$,
    \begin{equation*}
        \Normt{f_t^{=\ell}}^2 \leq \frac{2\delta'}{e^{\delta' \ell} - 1}\cdot \Normt{f_t}^2 \mper
    \end{equation*}
\end{lemma}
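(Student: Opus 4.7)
The plan is to reduce the lemma to a per-root estimate on each tree $T_r$ and then evaluate that estimate explicitly via the biregular branching structure of $T$. Since $f_t(x) = f(r)\, t^{\depth(x)}$ on $T_r$ by \Cref{def:function-extension}, and the trees $\{T_r\}_{r \in S}$ are vertex-disjoint, the norms decompose as
\[
  \|f_t\|^2 = \sum_{r \in S} f(r)^2 \sum_{k=0}^{\ell} t^{2k}\, n_r^{(k)},
  \qquad
  \|f_t^{=\ell}\|^2 = \sum_{r \in S} f(r)^2\, t^{2\ell}\, n_r^{(\ell)},
\]
where $n_r^{(k)}$ denotes the number of vertices at depth $k$ in $T_r$. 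So it suffices to prove the pointwise ratio bound $t^{2\ell} n_r^{(\ell)} / \sum_{k=0}^\ell t^{2k} n_r^{(k)} \le 2\delta/(e^{\delta\ell}-1)$ for each $r \in S$; summing over $r$ weighted by $f(r)^2$ then yields the stated inequality.

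The next step is to compute $n_r^{(k)}$ exactly. Writing $b_r$ for the number of tree-children of $r$ in $T_r$, the fact that $T$ is $(c,d)$-biregular except at the leaves forces $b_r \le c$ when $r \in L$ and $b_r \le d$ when $r \in R$; below level $1$, the branching factor alternates between $d-1$ (at R-type vertices) and $c-1$ (at L-type vertices) along every root-to-leaf path. Combining this with the identity $t^4(c-1)(d-1) = (1-\delta)^2$, the sequence $t^{2k} n_r^{(k)}$ collapses into a geometric sequence in $(1-\delta)^2$ up to a parity-dependent prefactor. Concretely, for $r \in L$ and $\ell = 2m$ one obtains the closed form
\[
  t^{2\ell} n_r^{(\ell)} = b_r\, t^4(d-1)\, (1-\delta)^{2(m-1)},
  \quad
  \sum_{k=0}^{\ell} t^{2k} n_r^{(k)} = 1 + b_r\, t^2 \bigl(1+t^2(d-1)\bigr) \cdot \frac{1-(1-\delta)^{2m}}{1-(1-\delta)^{2}},
\]
and the case $r \in R$ is symmetric under the exchange $c \leftrightarrow d$.

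The resulting ratio is a monotonically increasing function of $b_r$, so it is maximized at $b_r = c$ (respectively $b_r = d$), which is precisely where the uniform degree bound is brought to bear. After substituting $b_r = c$, the estimate reduces to a single-variable calculation in $\delta$ and $m$: using $1-\delta \le e^{-\delta}$ to bound $(1-\delta)^{2(m-1)} \le e^{-2(m-1)\delta}$, the elementary bound $1-(1-\delta)^2 \le 2\delta$, and $1-(1-\delta)^{2m} \ge 1-e^{-\delta\ell}$ in the denominator, one checks directly that the pointwise ratio is at most $2\delta/(e^{2m\delta}-1) = 2\delta/(e^{\delta\ell}-1)$.

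The main obstacle I anticipate lies in avoiding a naive estimate that would miss by just a constant. Dropping the $+1$ in the denominator and taking $b_r \to \infty$ produces a leading coefficient $\frac{t^2(d-1)}{1+t^2(d-1)}$ that tends to $1$ whenever $d \gg c$, in which case the resulting estimate barely exceeds the target $\frac{2\delta}{e^{\delta\ell}-1}$. Retaining the $+1$ and invoking $b_r \le c$ introduces a $\frac{1}{c}$ slack in the denominator that precisely absorbs the multiplicative $e^{2\delta}$-type surplus arising from converting $(1-\delta)^{2(m-1)}$ to $e^{-2(m-1)\delta}$; carefully tracking this cancellation is where the delicate bookkeeping sits.
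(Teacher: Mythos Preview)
Your per-root decomposition and closed forms are correct, and the monotonicity-in-$b_r$ reduction is clean. The gap is in the last step: the three elementary bounds you list are not tight enough to land on $\tfrac{2\delta}{e^{\delta\ell}-1}$. Take $r\in L$ with $b_r=c$, send $d\to\infty$ (so $ct^2\to 0$ while $ct^4(d-1)\to K\coloneqq\tfrac{c(1-\delta)^2}{c-1}$), and then $m\to\infty$. After applying your bounds the ratio is at most $K e^{-2(m-1)\delta}\big/\bigl(1+K\,\tfrac{1-e^{-2m\delta}}{2\delta}\bigr)$, and dividing this by the target $\tfrac{2\delta}{e^{2m\delta}-1}$ yields $\tfrac{K e^{2\delta}}{2\delta+K}$, which exceeds~$1$ for every $\delta\in(0,1)$ (e.g.\ $\approx 1.05$ at $c=3,\ \delta=0.1$). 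The $+1$ you retain contributes only an $O(\delta)$ relative correction in this regime and does not ``precisely absorb'' the surplus; the real loss is that you apply $(1-\delta)^{2m}\le e^{-2m\delta}$ \emph{separately} in numerator and denominator, discarding the factor $\bigl((1-\delta)e^{\delta}\bigr)^{2m}<1$ that is actually what drives the inequality.

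The paper avoids this by a shorter route: lower-bound $\|f_t\|^2$ using only the \emph{even} levels. For each even $k\ge 2$ one has $n_r^{(k)}t^{2k}=\tfrac{\deg_{T_r}(r)}{\deg_G(r)-1}\,(1-\delta)^{k}$, so the same prefactor multiplies both the leaf term (at $k=\ell$) and the even-level sum, and cancels in the ratio. What remains is the parameter-free quantity
\[
\frac{(1-\delta)^{\ell}}{\sum_{i=0}^{\ell/2}(1-\delta)^{2i}}\ \le\ \frac{2\delta\,(1-\delta)^{\ell}}{1-(1-\delta)^{\ell}}\ \le\ \frac{2\delta}{e^{\delta\ell}-1}\,,
\]
where the last step is a single use of $(1-\delta)^{\ell}\le e^{-\delta\ell}$ together with the monotonicity of $x\mapsto\tfrac{x}{1-x}$. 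The point your plan misses is that restricting to even levels makes the dependence on $b_r,\,c,\,d$ drop out \emph{before} any estimation is done; in your all-levels formula the odd-level block carries the factor $ct^2$ rather than $\tfrac{c}{c-1}(1-\delta)^2$, and that mismatch is exactly the $\tfrac{c}{c-1}$-type excess your bounds cannot remove.
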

\begin{proof}
    We will lower bound $\Normt{f_t}^2$ and upper bound the contribution from the leaves at depth $\ell$.
    Fix a vertex $r\in R$ (with $\deg_G(r) = d$) and consider the tree $T_r$ rooted at $r$.
    Let $\deg_{T_r}(r)$ be the degree of $r$ in $T_r$.
    The number of children of vertices in the tree alternates between $c-1$ and $d-1$ as we go down the tree.
    Thus, for an even integer $k \leq \ell$, the number of vertices in the $k$-th level is
    \begin{equation*}
        \deg_{T_r}(r) (c-1) \Paren{(c-1)(d-1)}^{\frac{k}{2}-1}
        = \frac{\deg_{T_r}(r)}{\deg_G(r)-1} \Paren{(c-1)(d-1)}^{\frac{k}{2}} \mper
        \numberthis \label{eq:even-layers}
    \end{equation*}
    The same argument shows that the above also holds for $r\in L$ (with $\deg_G(r) = c$).
    Thus, the contribution of the tree $T_r$ to $\Normt{f_t}^2$ can be lower bounded by the product of the following two terms:
    \begin{align*}
        &f(r)^2 \frac{\deg_{T_r}(r)}{\deg_G(r)-1} \\
         \sum_{\substack{0 \leq k \leq \ell \\ \text{$k$ even}}} t^{2k} ((c-1)(d-1))^{\frac{k}{2}}
        =& \sum_{i=0}^{\ell/2} (1-\delta)^{2i}
        =\frac{1 - (1-\delta)^{\ell+2}}{1 - (1-\delta)^2}
        \geq \frac{1 - (1-\delta)^\ell}{2\delta} \mper
    \end{align*}
    Next, the contribution from the leaves of $T_r$ to $\Normt{f_t^{=\ell}}^2$ is also given by \Cref{eq:even-layers}.
    Thus, we have
    \begin{equation*}
        \frac{\Normt{f_t^{=\ell}}^2}{\Normt{f_t}^2} \leq (1-\delta)^{\ell} \frac{2\delta}{1 - (1-\delta)^{\ell}} 
        \leq \frac{2\delta}{e^{\delta \ell} - 1} \mcom
    \end{equation*}
    using $(1-\delta)^\ell \leq e^{-\delta \ell}$, finishing the proof.
\end{proof}

\subsection{Folding regular tree extensions}
\label{sec:folding-tree-extension}

Given a regular tree extension $T$ of an induced subgraph $G[S]$, there is a natural folding into $G$ via breadth-first search from $S$.

\begin{definition}[Folding into $G$] \label{def:folding}
    Let $G = (V, E)$ be a $d$-regular or $(c,d)$-biregular graph, let $S \subseteq V$, and let $T$ be the depth-$\ell$ regular tree extension of $G[S]$.
    There is a natural homomorphism $\sigma: T \to G$ such that
    \begin{itemize}
        \item $\sigma(x) = x$ for all $x \in S$;
        \item $\deg_T(x) = \deg_G(\sigma(x))$ for all $x\in V(T) \setminus \Leaves(T)$;
        \item Two edges $\{x,y\}$ and $\{y,z\}$ in $T$ sharing a vertex are not mapped to the same edge in $E$, i.e., all edges in $T$ that map to the same edge in $E$ are vertex-disjoint.
    \end{itemize}
\end{definition}

\begin{definition}[Folded function] \label{def:folded-function}
    Fix a map $\sigma : T \to G$.
    Given any $f : V(T) \to \R$, we associate each vertex $v\in G$ with a function $f^v : V(T) \to \R$ such that for $x\in T$,
    \begin{align*}
        f^v(x) = \begin{cases}
            f(x) & \text{if $\sigma(x) = v$} \mcom \\
            0 & \text{otherwise} \mper
        \end{cases}
    \end{align*}
    We define the \emph{folded} function $\wt{f}: V(G) \to \R$ to be
    \begin{equation*}
        \wt{f}(v) = \Normt{f^v} \mper
    \end{equation*}
\end{definition}

\begin{observation} \label{obs:folded-function-norm}
    The $f^v$'s have disjoint support, thus $\normt{\wt{f}}^2 = \sum_{v\in G} \normt{f^v}^2 = \normt{f}^2$.
    More generally, let $\Gamma, \wt{\Gamma}$ be diagonal operators such that $(\Gamma f)(x) = \gamma(\deg_G(\sigma(x))) f(x)$ for $x\in T$ and $(\wt{\Gamma}g)(v) = \gamma(\deg_G(v)) g(v)$ for $v\in G$.
    Then, $\iprod{\wt{f}, \wt{\Gamma} \wt{f}} = \iprod{f, \Gamma f}$.
\end{observation}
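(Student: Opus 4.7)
The plan is to verify both claims as essentially direct computations from the definitions, using the single structural fact that the homomorphism $\sigma: T \to G$ partitions $V(T)$ into the fibers $\sigma^{-1}(v)$ for $v \in V(G)$, and that by construction $f^v$ is precisely the restriction of $f$ to $\sigma^{-1}(v)$ (extended by zero).

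For the first equality, I would simply observe that since the fibers $\{\sigma^{-1}(v)\}_{v \in V(G)}$ partition $V(T)$, the functions $\{f^v\}_{v \in V(G)}$ have pairwise disjoint supports whose union is $V(T)$. Consequently $\sum_{v\in V(G)} \normt{f^v}^2 = \sum_{x \in V(T)} f(x)^2 = \normt{f}^2$. By the definition $\wt{f}(v) = \normt{f^v}$, summing $\wt{f}(v)^2$ over $v \in V(G)$ yields the same quantity, establishing $\normt{\wt{f}}^2 = \normt{f}^2$.

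For the second equality, the key point is that both $\Gamma$ and $\wt{\Gamma}$ scale by a weight that depends only on $\deg_G(\sigma(x))$, i.e., only on the fiber to which $x$ belongs. Starting from the right-hand side, $\iprod{f,\Gamma f} = \sum_{x \in V(T)} \gamma(\deg_G(\sigma(x))) f(x)^2$, and grouping the sum over $x$ according to $v = \sigma(x)$ gives $\sum_{v \in V(G)} \gamma(\deg_G(v)) \sum_{x \in \sigma^{-1}(v)} f(x)^2 = \sum_{v \in V(G)} \gamma(\deg_G(v)) \normt{f^v}^2$. Substituting $\normt{f^v}^2 = \wt{f}(v)^2$ recognizes this as $\iprod{\wt{f}, \wt{\Gamma} \wt{f}}$.

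Honestly, there is no real obstacle: the observation is a routine bookkeeping statement, and the entire content is the fact that $\sigma$ is a well-defined map that organizes the vertices of $T$ into fibers indexed by $V(G)$. The only thing to be careful about is notational consistency — making sure that $f^v$ really is supported on $\sigma^{-1}(v)$ (which is immediate from \Cref{def:folded-function}) and that the diagonal weights $\gamma(\deg_G(\sigma(x)))$ are genuinely constant on each fiber (which is immediate from the definition of $\Gamma$).
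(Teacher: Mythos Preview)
Your proposal is correct and is precisely the natural verification; the paper itself treats this as a self-evident observation and gives no proof, so your fiberwise bookkeeping is exactly the intended argument.
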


We next prove the following useful lemma that relates the quadratic forms of $f$ and $\wt{f}$ with $A_G$.

\begin{lemma} \label{lem:folded-quadratic-form}
    Let $G = (V,E)$ be a $d$-regular or $(c,d)$-biregular graph, let $S\subseteq V$, and let $T$ be a regular tree extension of $G[S]$.
    For any $f : V(T) \to \R$ and its folded function $\wt{f} : V(G) \to \R$, we have
    \begin{equation*}
        \Iprod{f, A_{T} f} \leq \Iprod{\wt{f}, A_G \wt{f}} \mper
    \end{equation*}
\end{lemma}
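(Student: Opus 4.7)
\medskip

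\noindent\textbf{Proof proposal.} The plan is to expand both quadratic forms as sums over edges and apply Cauchy--Schwarz edge-by-edge in $G$. Write
\[
    \Iprod{f, A_T f} = 2\sum_{\{x,y\}\in E(T)} f(x) f(y),
    \qquad
    \Iprod{\wt{f}, A_G \wt{f}} = 2\sum_{\{u,v\}\in E(G)} \Normt{f^u}\,\Normt{f^v},
\]
using $\wt{f}(v) = \normt{f^v}$. The key observation is that the folding map $\sigma:T\to G$ partitions $E(T)$ by which edge of $G$ each tree-edge is mapped to: for any $\{u,v\}\in E(G)$, let
\[
    M_{u,v} \coloneqq \bigl\{\{x,y\}\in E(T) : \{\sigma(x),\sigma(y)\} = \{u,v\}\bigr\}.
\]
By the third bullet of \Cref{def:folding}, edges of $M_{u,v}$ are pairwise vertex-disjoint, i.e.\ $M_{u,v}$ is a \emph{matching} in $T$ whose left endpoints lie in $\sigma^{-1}(u)$ and right endpoints lie in $\sigma^{-1}(v)$.

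With this structure in hand, it suffices to prove the per-edge inequality
\[
    \sum_{\{x,y\}\in M_{u,v}} f(x) f(y) \;\le\; \Normt{f^u}\,\Normt{f^v}
\]
for every $\{u,v\}\in E(G)$. For this I would bound $f(x)f(y) \le |f(x)|\,|f(y)|$ and apply Cauchy--Schwarz to the matching $M_{u,v}$, so that
\[
    \sum_{\{x,y\}\in M_{u,v}} |f(x)|\,|f(y)|
    \;\le\;
    \biggl(\sum_{\{x,y\}\in M_{u,v}} f(x)^2 \biggr)^{1/2}
    \biggl(\sum_{\{x,y\}\in M_{u,v}} f(y)^2 \biggr)^{1/2}.
\]
Because $M_{u,v}$ is a matching, each vertex $x\in \sigma^{-1}(u)$ appears in at most one edge of $M_{u,v}$, so $\sum_{\{x,y\}\in M_{u,v}} f(x)^2 \le \sum_{x\in \sigma^{-1}(u)} f(x)^2 = \Normt{f^u}^2$; the analogous bound holds on the $v$-side. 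Combining these gives the per-edge inequality.

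Summing the per-edge inequality over $\{u,v\}\in E(G)$ and using that $\{M_{u,v}\}_{\{u,v\}\in E(G)}$ partitions $E(T)$ (since every tree edge has a well-defined image in $E(G)$) yields
\[
    \sum_{\{x,y\}\in E(T)} f(x)f(y) \;\le\; \sum_{\{u,v\}\in E(G)} \Normt{f^u}\,\Normt{f^v},
\]
which after multiplying by $2$ is exactly the claim. There is no real obstacle here: the only subtlety is correctly invoking the disjointness condition from \Cref{def:folding} to conclude that the fibers $M_{u,v}$ are matchings, which is precisely what makes Cauchy--Schwarz applicable without overcounting the $\ell_2$ mass of $f$.
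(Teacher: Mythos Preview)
Your proposal is correct and is essentially the same argument as the paper's: both expand the quadratic forms over edges, group tree-edges by their image under $\sigma$, invoke the vertex-disjointness condition from \Cref{def:folding} to see each fiber $M_{u,v}$ is a matching, and apply Cauchy--Schwarz per edge of $G$. The only difference is cosmetic---the paper phrases the per-edge bound as ``an inner product between permutations of $f^u$ and $f^v$'' while you spell out the matching and the $\ell_2$ bookkeeping explicitly.
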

\begin{proof}
    Recall from \Cref{def:folding} that the map $\sigma: T \to G$ satisfies that if $\{x,y\}$ is an edge in $T$, then $\{\sigma(x), \sigma(y)\} \in E$.
    Then,
    \begin{equation*}
        \Iprod{f, A_{T} f} = 2\sum_{\{x, y\} \in E(T)} f(x) f(y)
        = 2\sum_{\{u, v\} \in E(G)} \sum_{\{x, y\} \in E(T)}  \bone(\sigma(\{x,y\}) = \{u,v\}) \cdot f^{\sigma(x)}(x) f^{\sigma(y)}(y) \mper
    \end{equation*}
    Moreover, all edges in $T$ that map to the same edge are vertex-disjoint.
    Thus, for any $\{u, v\} \in E$, $\sum_{\{x, y\} \in E(T)}  \bone(\sigma(\{x,y\}) = \{u,v\}) \cdot f^{\sigma(x)}(x) f^{\sigma(y)}(y)$ can be expressed as an inner product between some permutations of $f^u$ and $f^v$, which is upper bounded by $\normt{f^u} \cdot \normt{f^v}= \wt{f}(u) \wt{f}(v)$ by Cauchy-Schwarz.
    Thus, we have
    \begin{equation*}
        \Iprod{f, A_{T} f}
        \leq 2\sum_{\{u, v\} \in E(G)} \wt{f}(u) \wt{f}(v)
        = \Iprod{\wt{f}, A_G \wt{f}} \mper
        \qedhere
    \end{equation*}
\end{proof}

\subsection{Proof of \texorpdfstring{\Cref{thm:biregular-subgraph}}{Theorem~\ref{thm:biregular-subgraph}}}
\label{sec:proof-of-spectral-radius}

Before we prove \Cref{thm:biregular-subgraph}, we first prove the following lemma for convenience.

\begin{lemma} \label{lem:lambda-bound}
    Let $3 \leq c \leq d \in \N$ and $\eps \in (0,1)$.
    Let $\lambda \geq \sqrt{c-1} + \sqrt{d-1}$ and $\wt{\lambda} = \lambda(1+\eps)$.
    Then, for all $x$ such that
    \begin{equation*}
        x \geq \frac{1}{2} \Paren{\sqrt{\wt{\lambda}^2 - (\sqrt{c-1}+\sqrt{d-1})^2} + \sqrt{\wt{\lambda}^2 - (\sqrt{c-1}-\sqrt{d-1})^2} } \mcom
    \end{equation*}
    we have
    \begin{equation*}
        x^4 - x^2 (\lambda^2(1+\eps) - (c+d-2)) + (c-1)(d-1) > 0 \mper
    \end{equation*}
\end{lemma}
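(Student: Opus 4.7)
The plan is to recast the desired strict polynomial inequality as a statement about the location of the larger root of a quadratic in $y = x^2$. Setting $a = \sqrt{c-1}$, $b = \sqrt{d-1}$ and $B = \lambda^2(1+\eps) - (a^2+b^2)$, the conclusion becomes $Q(x^2) > 0$, where $Q(y) = y^2 - B\,y + a^2 b^2$. The standard identity $B^2 - 4a^2b^2 = (\lambda^2(1+\eps) - (a+b)^2)(\lambda^2(1+\eps) - (a-b)^2)$ will let me write the roots of $Q$ in a form directly comparable to the hypothesis on $x$.

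First, I would check that $Q$ has two distinct positive real roots, so that $Q(y) > 0$ is equivalent to $y$ lying outside the interval $[y_-, y_+]$. From $\lambda \ge a+b$ and $\eps > 0$ I get $B \ge (a+b)^2(1+\eps) - (a^2+b^2) = 2ab + \eps(a+b)^2 > 2ab$, making the discriminant strictly positive and putting both roots in $(0, B)$. Using the identity above, the larger root is
\[
y_+ \;=\; \tfrac{1}{2}\Bigl(B + \sqrt{(\lambda^2(1+\eps) - (a+b)^2)(\lambda^2(1+\eps) - (a-b)^2)}\,\Bigr).
\]

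Next I would square the hypothesis. Let $z = \tfrac{1}{2}\bigl(\sqrt{\wt{\lambda}^2 - (a+b)^2} + \sqrt{\wt{\lambda}^2 - (a-b)^2}\bigr)$. Expanding and collapsing the cross term gives
\[
z^2 \;=\; \tfrac{1}{2}\Bigl(\wt{\lambda}^2 - (a^2+b^2) + \sqrt{(\wt{\lambda}^2 - (a+b)^2)(\wt{\lambda}^2 - (a-b)^2)}\,\Bigr),
\]
which is exactly the formula for $y_+$ but with $\lambda^2(1+\eps)$ replaced everywhere by $\wt{\lambda}^2$. Since $\wt{\lambda} = \lambda(1+\eps)$, we have $\wt{\lambda}^2 = \lambda^2(1+\eps)^2 > \lambda^2(1+\eps)$ strictly for every $\eps > 0$.

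To finish, I would invoke monotonicity: the map $t \mapsto \tfrac{1}{2}\bigl(t - (a^2+b^2) + \sqrt{(t-(a+b)^2)(t-(a-b)^2)}\bigr)$ is strictly increasing on $t \ge (a+b)^2$, since both summands are, so replacing $\lambda^2(1+\eps)$ by the strictly larger $\wt{\lambda}^2$ yields $z^2 > y_+$. The hypothesis therefore gives $x^2 \ge z^2 > y_+$, from which $Q(x^2) > 0$ is immediate. This is the entire argument; no step is really delicate, and the only substantive point is that the strict inequality is forced by $\eps > 0$, coming from the gap between $(1+\eps)^2$ and $1+\eps$.
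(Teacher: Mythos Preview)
Your proof is correct and follows essentially the same approach as the paper: both reduce to showing $x^2$ exceeds the larger root of the quadratic $y^2 - (\lambda^2(1+\eps)-(c+d-2))y + (c-1)(d-1)$, square the hypothesis, use the identity $(\mu-(a+b)^2)(\mu-(a-b)^2)=(\mu-a^2-b^2)^2-4a^2b^2$ to match the root formula with $\wt{\lambda}^2$ in place of $\lambda^2(1+\eps)$, and conclude via $\wt{\lambda}^2=\lambda^2(1+\eps)^2>\lambda^2(1+\eps)$. Your write-up is in fact slightly more careful than the paper's, making explicit the positivity of the discriminant and the monotonicity step that the paper leaves implicit in its final line.
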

\begin{proof}
    Denote $a \coloneqq c-1$ and $b \coloneqq d-1$ for convenience.
    Then, to show that $x^4 - x^2(\lambda^2(1+\eps) - a-b) + ab \geq 0$, it suffices to verify that
    \begin{equation*}
        x^2 > \frac{1}{2} \Paren{\lambda^2(1+\eps) - a-b} + \frac{1}{2} \sqrt{\Paren{\lambda^2(1+\eps) - a-b}^2 - 4ab} \mper
    \end{equation*}
    Squaring both sides of $x \geq \frac{1}{2} \Bigparen{\sqrt{\wt{\lambda}^2 - (\sqrt{a}+\sqrt{b})^2} + \sqrt{\wt{\lambda}^2 - (\sqrt{a}-\sqrt{b})^2} }$, we get
    \begin{align*}
        x^2 &\geq \frac{1}{2} \Paren{\wt{\lambda}^2 - a - b} + \frac{1}{2} \sqrt{(\wt{\lambda}^2 - (\sqrt{a}+\sqrt{b})^2) (\wt{\lambda}^2 - (\sqrt{a}-\sqrt{b})^2)} \\
        &= \frac{1}{2} \Paren{\wt{\lambda}^2 - a - b} + \frac{1}{2} \sqrt{\wt{\lambda}^4 - 2(a+b) \wt{\lambda}^2 + (a-b)^2} \\
        &= \frac{1}{2} \Paren{\wt{\lambda}^2 - a - b} + \frac{1}{2} \sqrt{(\wt{\lambda}^2 - a - b)^2 - 4ab} \mcom
    \end{align*}
    which completes the proof with $\wt{\lambda} = \lambda(1+\eps)$.
\end{proof}

\begin{proof}[Proof of \Cref{thm:biregular-subgraph}]
    We first verify that the assumption on $t$ (\Cref{eq:t-condition}) implies that
    \begin{equation*}
        t^2 \leq \frac{1-\eps}{\sqrt{(c-1)(d-1)}} \mper
        \numberthis \label{eq:t-ub}
    \end{equation*}
    Indeed, as $\wt{\lambda} = \lambda(1+O(\eps)) \geq (\sqrt{c-1} + \sqrt{d-1})(1+\eps)$, \Cref{eq:t-condition} implies that
    \begin{align*}
        \frac{1}{t^2} &\geq \frac{1}{4} \Paren{(\sqrt{c-1}+\sqrt{d-1})^2(1+\eps)^2 - (\sqrt{c-1}-\sqrt{d-1})^2} \\
        &\geq \sqrt{(c-1)(d-1)} + \frac{1}{2}  (\sqrt{c-1}+\sqrt{d-1})^2 \eps
    \end{align*}
    which implies \Cref{eq:t-ub}.

    We would like to show that $\iprod{f, H_{G[S]}(t) f} > 0$ for any function $f: S \to \R$.
    Let $\ell = \ceil{\frac{1}{2\eps}}$ be an even integer and let $T$ be the depth-$\ell$ regular tree extension of $G[S]$ (\Cref{def:regular-tree-extension}).
    Let $f_t: V(T) \to \R$ be the function extension of $f$ to $T$ with parameter $t$.
    By \Cref{lem:H-TG}, we have
    \begin{align*}
        \Iprod{f, H_{G[S]}(t) f} = \Iprod{f_t, H_T(t) f_t}
        = \Iprod{f_t, ((D_T-\Id) t^2 - tA_T + \Id) f_t} \mper
    \end{align*}
    Note that all internal vertices $x \in T \setminus \Leaves(T)$ have degree $c$ or $d$ while the leaves have degree $1$.
    Let $D_{T}'$ be the diagonal matrix such that the leaves have the ``correct'' degree, i.e., for $x \in \Leaves(T)$ in the tree $T_r$ rooted at $r \in S$, $D_{T}'[x,x] = \deg_G(r)$ (since $\ell$ is even).
    Then, by \Cref{eq:t-ub}, \Cref{lem:function-extension-decay} states that $f_t^{=\ell}$ decays with a factor $\frac{2\eps}{e^{\eps \ell}-1} \leq 4\eps$, thus
    \begin{equation*}
        \Iprod{f_t, (D_T-\Id) f_t} 
        = \Iprod{f_t, (D_{T}'- \Id) f_t} - \Iprod{f_t^{=\ell}, (D_{T}'-\Id) f_t^{=\ell}}
        \geq \Iprod{f_t, (D_{T}'- \Id) f_t} \Paren{1 - 4\eps} \mper
    \end{equation*}

    Consider the folded function $\wt{f}_t: V(G) \to \R$ as defined in \Cref{def:folded-function}.
    By \Cref{obs:folded-function-norm}, we have $\Iprod{f_t, D_{T}' f_t} = \iprod{\wt{f}_t, D_{G} \wt{f}_t}$ and $\normt{f_t}^2 = \normt{\wt{f}_t}^2$.
    Moreover, by \Cref{lem:folded-quadratic-form}, $\iprod{f_t, A_{T} f_t} \leq \Iprod{\wt{f}_t, A_G \wt{f}_t}$.
    Thus,
    \begin{align*}
        \Iprod{f, H_{G[S]}(t) f}
        &\geq t^2 \Iprod{\wt{f}_t, (D_G-\Id) \wt{f}_t} (1-4\eps) - t \Iprod{\wt{f}_t, A_G \wt{f}_t} + \Normt{\wt{f}_t}^2 \\
        &\geq (1-4\eps)\Iprod{\wt{f}_t, \Paren{t^2(D_G - \Id) + \Id} \wt{f}_t} - t\Iprod{\wt{f}_t, A_G \wt{f}_t} \mper
        \numberthis \label{eq:f-Hf-lower-bound}
    \end{align*}
    We would like to show that the above is non-negative.
    Denote $\Gamma_G \coloneqq t^2(D_G - \Id) + \Id$,
    and $\gamma_1 \coloneqq t^2(c-1) + 1$ and $\gamma_2 \coloneqq t^2(d-1) + 1$.
    Note that $\gamma_2 \geq \gamma_1 > 0$ as we assume that $c \leq d$.
    Since $G$ is a $(c,d)$-biregular graph, $\Gamma_G$ and $A_G$ have the following block structure,
    \begin{equation*}
        \Gamma_G = \begin{pmatrix}
            \gamma_1 \Id & 0 \\ 0 & \gamma_2 \Id
        \end{pmatrix} \mcom
        \quad
        A_G = \begin{pmatrix}
            0 & A_{L,R} \\ A_{L,R}^\top & 0
        \end{pmatrix} \mper
    \end{equation*}
    In particular,
    \begin{equation*}
        (1-4\eps)\Gamma_G - tA_G = \Gamma_G^{1/2} \Paren{(1-4\eps)\Id - t \cdot \Gamma_G^{-1/2}A_G \Gamma_G^{-1/2}} \Gamma_G^{1/2}
        = \Gamma_G^{1/2} \Paren{(1-4\eps)\Id - \frac{t}{\sqrt{\gamma_1\gamma_2}} A_G} \Gamma_G^{1/2} \mper
    \end{equation*}
    Then, denoting $g \coloneqq \Gamma_G^{1/2} \wt{f}_t$, we can write \Cref{eq:f-Hf-lower-bound} as
    \begin{equation*}
        \Iprod{f, H_{G[S]}(t)f} \geq \Iprod{\wt{f}_t, \Paren{(1-4\eps)\Gamma_G - tA_G} \wt{f}_t}
        = (1-4\eps)\Normt{g}^2 - \frac{t}{\sqrt{\gamma_1\gamma_2}}\Iprod{g, A_G g} \mper
        \numberthis \label{eq:f-Hf-lb2}
    \end{equation*}

    Next, we upper bound $\iprod{g, A_G g}$.
    For any $(c,d)$-biregular graph, the (normalized) top eigenvector of $A_G$ is $\frac{1}{\sqrt{2|E|}} D_G^{1/2} \vec{1}$ with eigenvalue $\sqrt{cd}$.
    Thus,
    \begin{equation*}
        \Iprod{g, A_G g} \leq \frac{\sqrt{cd}}{2|E|} \Iprod{g, D_G^{1/2}\vec{1}}^2 + \lambda \Normt{g}^2 \mcom
    \end{equation*}
    where $\lambda = \max(\lambda_2(A_G), \sqrt{c-1}+\sqrt{d-1})$ is the second eigenvalue.

    Since $T$ has depth $\ell$, the support of $\wt{f}_t$ (and $g$) must be contained in $B \coloneqq \{v\in V(G): \dist(v, S) \leq \ell \}$.
    We have $|B| \leq |S| \sum_{i=0}^\ell d^i \leq |S| d^{\ell+1}$.
    Thus, by Cauchy-Schwarz,
    \begin{equation*}
        \frac{\sqrt{cd}}{2|E|} \Iprod{g, D_G^{1/2}\vec{1}}^2
        \leq \frac{\sqrt{cd}}{2|E|} \cdot d|B| \cdot \Normt{g}^2
        \leq d^{-1/4\eps} \Normt{g}^2
        \le \eps \Normt{g}^2 \mcom
    \end{equation*}
    since $|S| \leq d^{-1/\eps}|L \cup R|$, $\ell = \ceil{\frac{1}{2\eps}}$, $|E| = c|L| = d|R|$, and $\eps \leq 0.1$ (note that $d^{-1/4\eps} \leq \eps$ for all $d \geq 3$ and $\eps \leq 0.1$).

    Thus, $\iprod{g, A_G g} \leq (\lambda + \eps) \Normt{g}^2 \leq \lambda (1+\eps) \Normt{g}^2$, and from \Cref{eq:f-Hf-lb2},
    \begin{equation*}
        \Iprod{f, H_{G[S]}(t)f} \geq \frac{1}{\sqrt{\gamma_1\gamma_2}}\Paren{(1-4\eps) \sqrt{\gamma_1\gamma_2} - t\lambda(1 + \eps)} \mper
    \end{equation*}
    As $\frac{1+\eps}{1-4\eps} \leq 1 +5\eps$, to prove that the above is positive, it suffices to prove that $t^2 \lambda^2 (1+5\eps) < \gamma_1\gamma_2 = (t^2(c-1)+1)(t^2(d-1)+1)$, or equivalently,
    \begin{equation*}
        \frac{1}{t^4} - \frac{1}{t^2} \Paren{\lambda^2(1+5\eps) - (c-1) - (d-1)} + (c-1)(d-1) > 0 \mper
    \end{equation*}
    With $\wt{\lambda} = \lambda(1+5\eps)$ and the assumption on $t$ (\Cref{eq:t-condition}), the above holds via \Cref{lem:lambda-bound}.
\end{proof}
\section{Expansion and density of subgraphs}    \label{sec:subgraph-density}

The spectral radius of the nonbacktracking matrix of a subgraph of a bipartite graph imposes constraints on the left and right degrees, articulated by the following.



\begin{theorem}[Subgraph density in (near-)Ramanujan graphs; restatement of \Cref{thm:subgraph-density-main}] \label{thm:expansion}
    Let $3 \leq c \leq d$ be integers, $\gamma \in [0,1]$, and $\eps \in (0,0.1)$.
    Let $G = (L \cup R, E)$ be a $(c,d)$-biregular graph such that $\lambda_2(G) \leq (\sqrt{c-1} + \sqrt{d-1})(1 + \gamma/d)$.
    Then, there exists $\delta = \delta(\eps,c,d) > 0$ such that for every $S_1 \subseteq L$ and $S_2 \subseteq R$ with $|S_1| + |S_2| \leq \delta |L \cup R|$,
    the left and right average degrees $d_1 = \frac{|E(S_1,S_2)|}{|S_1|}$ and $d_2 = \frac{|E(S_1,S_2)|}{|S_2|}$ in the induced subgraph $G[S_1 \cup S_2]$ must satisfy
    \begin{equation*}
        (d_1-1) (d_2-1) \leq \sqrt{(c-1)(d-1)} \cdot (1+O(\eps+\sqrt{\gamma})) \mper
    \end{equation*}
\end{theorem}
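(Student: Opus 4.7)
My plan is to sandwich the non-backtracking spectral radius $\rho(B_{G[S_1 \cup S_2]})$ of the induced subgraph between an upper bound derived from spectral expansion via \Cref{thm:biregular-subgraph} and a lower bound in terms of the average degrees $d_1, d_2$ obtained from the Ihara--Bass framework in \Cref{lem:spectral-radius}.

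For the upper bound, I would set $\delta \coloneqq d^{-1/\eps}$ so that the subgraph hypothesis $|S_1\cup S_2| \le d^{-1/\eps}|L\cup R|$ of \Cref{thm:biregular-subgraph} is satisfied. The assumption $\lambda_2(G) \le (\sqrt{c-1}+\sqrt{d-1})(1+\gamma/d)$ together with the $(1+O(\eps))$ factor appearing in $\wt{\lambda}$ effectively replaces $\gamma$ by $\gamma' = \gamma + O(\eps d)$, where the hidden constants may depend on $c$ and $d$. Substituting into \Cref{cor:t-condition-simplified} and then invoking \Cref{lem:spectral-radius} gives
\begin{equation*}
    \rho(B_{G[S_1\cup S_2]})^2 \;\le\; \sqrt{(c-1)(d-1)} \cdot \bigl(1 + O(\eps + \sqrt{\gamma})\bigr).
\end{equation*}

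For the lower bound, I would test the Bethe Hessian $H_{G[S_1\cup S_2]}(t)$ against the two-parameter constant function $v = (\alpha\,\vec{1}_{S_1},\, \beta\,\vec{1}_{S_2})$. Using $|E(S_1,S_2)| = d_1|S_1| = d_2|S_2|$, a direct calculation yields
\begin{equation*}
    v^{\top} H_{G[S_1\cup S_2]}(t)\, v \;=\; \alpha^2 |S_1|\bigl((d_1-1)t^2 + 1\bigr) + \beta^2 |S_2|\bigl((d_2-1)t^2 + 1\bigr) - 2\alpha\beta\, d_1|S_1|\, t.
\end{equation*}
This $2\times 2$ form fails to be positive definite iff its determinant is non-positive, which (after canceling the common factor $|S_1||S_2|$) is equivalent to
\begin{equation*}
    (d_1-1)(d_2-1)\, t^4 \;-\; \bigl((d_1-1)(d_2-1) + 1\bigr)\, t^2 \;+\; 1 \;\le\; 0.
\end{equation*}
When $(d_1-1)(d_2-1) \ge 1$ the quadratic in $t^2$ factors with roots $1$ and $1/((d_1-1)(d_2-1))$; choosing $t^\star \coloneqq 1/\sqrt{(d_1-1)(d_2-1)}$ therefore produces $H_{G[S_1\cup S_2]}(t^\star) \not\succ 0$, and \Cref{lem:spectral-radius} concludes $\rho(B_{G[S_1\cup S_2]}) \ge \sqrt{(d_1-1)(d_2-1)}$.

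Combining both bounds yields the theorem when $(d_1-1)(d_2-1) \ge 1$; in the remaining regime the conclusion is immediate since $\sqrt{(c-1)(d-1)} \ge 2$ for $c \ge 3$. I expect the main technical obstacle to be carefully tracking the $O(\eps)$ corrections coming from the $(1+O(\eps))$ factor in $\wt{\lambda}$ through the quadratic formula underlying \Cref{cor:t-condition-simplified}, so that they absorb into the claimed $O(\eps+\sqrt{\gamma})$ error (with constants allowed to depend on $c$ and $d$) rather than inflating into a larger $O(\sqrt{\eps d})$-type term.
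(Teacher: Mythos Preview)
Your approach is essentially the paper's: the upper bound via \Cref{thm:biregular-subgraph} and \Cref{cor:t-condition-simplified} is exactly what the paper does, and your Bethe--Hessian test-vector argument is precisely the content of the paper's \Cref{lem:bipartite-avg-degree} (the paper optimizes over $\alpha$ with $\beta=1$, you compute the $2\times 2$ determinant, arriving at the same factorization $(t^2-1)((d_1-1)(d_2-1)t^2-1)$).

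The obstacle you flag is real and your choice $\delta=d^{-1/\eps}$ does not resolve it: applying \Cref{thm:biregular-subgraph} with parameter $\eps$ gives $\wt{\lambda}=(\sqrt{c-1}+\sqrt{d-1})(1+\gamma/d)(1+O(\eps))$, so the effective $\gamma'$ in \Cref{cor:t-condition-simplified} is $\gamma+O(\eps d)$ and the error becomes $O(\sqrt{\gamma}+\sqrt{\eps d})$, which cannot be absorbed into $O(\eps+\sqrt{\gamma})$ even with $d$-dependent constants. The fix (and what the paper does) is to invoke \Cref{thm:biregular-subgraph} with the rescaled parameter $\eps_0\asymp \eps^2/d$, at the cost of taking $\delta=d^{-1/\eps_0}=d^{-\Theta(d/\eps^2)}$; then $\gamma'=\gamma+O(\eps^2)$ and $\sqrt{\gamma'}=O(\sqrt{\gamma}+\eps)$ as desired.
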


\Cref{thm:expansion} is a direct consequence of \Cref{thm:biregular-subgraph} and the following lemma:
\begin{lemma} \label{lem:bipartite-avg-degree}
    Let $G = (L \cup R, E)$ be a bipartite graph, and let the left and right average degrees be $d_1 = \frac{|E|}{|L|}$ and $d_2 = \frac{|E|}{|R|}$, respectively.
    Then, for any $t\in (-1,1) \setminus \{0\}$ such that $H_G(t) \succeq 0$, we have
    \begin{equation*}
        (d_1-1) (d_2-1) \leq \frac{1}{t^2} \mper
    \end{equation*}
\end{lemma}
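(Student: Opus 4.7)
The plan is to test positive semidefiniteness of $H_G(t)$ against a two-parameter family of vectors supported uniformly on $L$ and $R$. Specifically, I would choose $f = \alpha \mathbf{1}_L + \beta \mathbf{1}_R$ for $\alpha,\beta \in \mathbb{R}$, and compute $\iprod{f, H_G(t) f}$ explicitly. Because $\sum_{v \in L} \deg(v) = \sum_{v \in R} \deg(v) = |E|$, the three pieces are easy: $\iprod{f,f} = \alpha^2|L| + \beta^2|R|$, $\iprod{f,(D_G - \Id) f} = \alpha^2|L|(d_1-1) + \beta^2|R|(d_2-1)$, and $\iprod{f, A_G f} = 2|E|\alpha\beta$.

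Putting these together, after the rescaling $x = \alpha\sqrt{|L|}$, $y = \beta\sqrt{|R|}$ and using the identity $|E| = \sqrt{d_1 d_2 |L||R|}$, the quadratic form becomes
\begin{equation*}
    \iprod{f, H_G(t) f} = \bigl[t^2(d_1-1) + 1\bigr] x^2 + \bigl[t^2(d_2-1) + 1\bigr] y^2 - 2t\sqrt{d_1 d_2}\, xy.
\end{equation*}
Since $H_G(t) \succeq 0$, this form is nonnegative in $(x,y)$, so its discriminant must be nonpositive:
\begin{equation*}
    \bigl[t^2(d_1-1)+1\bigr]\bigl[t^2(d_2-1)+1\bigr] \geq t^2 d_1 d_2.
\end{equation*}

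The final step is a short algebraic manipulation. Expanding and collecting terms gives
\begin{equation*}
    (d_1-1)(d_2-1)\, t^2 (t^2 - 1) \geq t^2 - 1,
\end{equation*}
and since $t \in (-1,1) \setminus \{0\}$ implies $t^2 - 1 < 0$, dividing flips the inequality to yield $(d_1-1)(d_2-1) \leq 1/t^2$, as required. The proof is essentially a direct calculation; the only ``insight'' is the choice of test vector, which is forced by wanting a quadratic form that couples $d_1$ and $d_2$ through the edge count. There is no real obstacle here beyond bookkeeping, and the argument works uniformly for all bipartite $G$ without needing $G$ to be regular or biregular.
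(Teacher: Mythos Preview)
Your proof is correct and is essentially the same approach as the paper's: both test $H_G(t)\succeq 0$ against vectors constant on $L$ and on $R$, then extract the constraint on $d_1,d_2$. The only cosmetic difference is that the paper fixes the $L$-value to $1$ and explicitly optimizes over the $R$-value $\alpha$, whereas you keep both parameters and invoke the discriminant condition for the resulting $2\times 2$ quadratic form; these are equivalent, and your final factorization $(t^2-1)\bigl[(d_1-1)(d_2-1)t^2 - 1\bigr]\ge 0$ is arguably cleaner than the paper's optimization.
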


\begin{proof}
    We can assume that $d_1, d_2 > 1$, otherwise the statement holds trivially with $|t| < 1$.
    Let $x$ be the vector such that for $u\in L \cup R$,
    \begin{equation*}
        x_u = \begin{cases}
            1 & u\in L, \\
            \alpha & u\in R,
        \end{cases}
    \end{equation*}
    where $\alpha \in \R$ will be determined later.
    Recall that $H_{G}(t) = (D_{G} - \Id)t^2 - t A_{G} + \Id$.
    Since $|E| = d_1|L| = d_2|R|$, we have $x^\top D_{G} x = d_1|L| + \alpha^2 d_2|R| = (1+\alpha^2) d_1 |L|$ and $x^\top A_{G} x = 2 \alpha |E| = 2\alpha d_1 |L|$, and substituting $|R| = \frac{d_1}{d_2}|L|$ we get
    \begin{align*}
        x^\top H_{G}(t) x &= x^\top \Paren{(D_{G} - \Id)t^2 - t A_{G} + \Id} x \\
        &= t^2 \Paren{(d_1-1)|L| + \alpha^2 (d_2-1) |R|} - t \cdot 2 \alpha  d_1|L| + \Paren{|L| + \alpha^2 |R|} \\
        &= |L| \Paren{(d_1-1)t^2 - 2t\alpha d_1 + 1} + |R| \Paren{\alpha^2 (d_2-1)t^2 + \alpha^2} \\
        &= |L| \Paren{ (d_1-1)t^2 + \alpha^2 d_1 t^2 - 2t \alpha d_1 + 1 + \frac{d_1}{d_2} \cdot \alpha^2(1-t^2) } \mper
    \end{align*}
    Then, $H_{G}(t) \succeq 0$ and $t \in (-1,1)$ imply that
    \begin{equation*}
        \frac{1}{d_2} \geq \frac{1}{1-t^2} \Paren{- \frac{(d_1-1)t^2 + 1}{d_1 \alpha^2} + \frac{2t}{\alpha} - t^2} \mper
    \end{equation*}
    To maximize the right-hand side, we choose $\frac{1}{\alpha} = \frac{d_1 t}{(d_1-1)t^2+1}$, which gives
    \begin{gather*}
        \frac{1}{d_2} \geq \frac{1}{1-t^2} \Paren{\frac{d_1 t^2}{(d_1-1)t^2+1} - t^2}
        = \frac{1}{1-t^2} \cdot \frac{t^2(d_1-1)(1-t^2)}{(d_1-1)t^2+1}
        = \frac{1}{1 + \frac{1}{(d_1-1)t^2}} \\
        \implies d_2 \leq 1 + \frac{1}{(d_1-1)t^2} \mper
    \end{gather*}
    Rearranging the above gives $(d_1-1)(d_2-1) \leq 1/t^2$.
\end{proof}

We can now prove \Cref{thm:expansion}.

\begin{proof}[Proof of \Cref{thm:expansion}]
    We consider the induced subgraph $G[S_1 \cup S_2]$.
    By \Cref{thm:biregular-subgraph}, we can choose $t$ such that $\frac{1}{t^2} \approx \sqrt{(c-1)(d-1)}$ and $H_{G[S_1\cup S_2]}(t) \succeq 0$ ---
    specifically, set $\delta = d^{-d/\eps^2}$ (depending only on $\eps,d$) and $\wt{\lambda} = (\sqrt{c-1}+\sqrt{d-1})(1 + \gamma/d) \cdot (1+O(\eps^2/d))$,
    then by \Cref{cor:t-condition-simplified}, $H_{G[S_1\cup S_2]} \succeq 0$ as long as
    \begin{equation*}
        \frac{1}{t^2} \geq \sqrt{(c-1)(d-1)} \cdot \Paren{1 + O\Paren{\sqrt{\gamma+\eps^2}}}
        = \sqrt{(c-1)(d-1)} \Paren{1+ O(\eps + \sqrt{\gamma})} \mper
    \end{equation*}
    Plugging the above into \Cref{lem:bipartite-avg-degree} completes the proof.
\end{proof}

As a corollary of \Cref{thm:expansion}, we recover the following result of Asherov and Dinur~\cite{AD23} proved for Ramanujan graphs, which further extends to near-Ramanujan graphs.

\begin{corollary} \label{cor:ve-near-ramanujan}
    Let $3 \leq c \leq d$ be integers, $\gamma \in [0,1]$, and $\eps \in (0,0.1)$.
    Let $G = (L \cup R, E)$ be a $(c,d)$-biregular graph such that $\lambda_2(G) \leq (\sqrt{c-1} + \sqrt{d-1})(1 + \gamma/d)$.
    Then, there exists $\delta = \delta(\eps,c,d) > 0$ such that for every $S \subseteq L$ of size $|S| \leq \delta |L|$,
    \begin{equation*}
        \frac{c|S|}{|N(S)|} \leq 1 + (1+O(\eps + \sqrt{\gamma})) \sqrt{\frac{d-1}{c-1}} \mper
    \end{equation*}
\end{corollary}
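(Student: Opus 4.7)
The plan is to apply \Cref{thm:expansion} in the most direct way possible, taking $S_1 = S \subseteq L$ and $S_2 = N(S) \subseteq R$, and reading off the conclusion.

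First I would verify the hypotheses of \Cref{thm:expansion}. The left vertex set is $L$, and each vertex in $S$ has exactly $c$ neighbors, all of which lie in $N(S) \subseteq R$ by definition. Therefore the number of edges in the induced bipartite subgraph $G[S \cup N(S)]$ is exactly $|E(S_1, S_2)| = c|S|$, which gives left average degree $d_1 = \frac{c|S|}{|S|} = c$ and right average degree $d_2 = \frac{c|S|}{|N(S)|}$ — precisely the quantity we want to bound. The size condition in \Cref{thm:expansion} requires $|S_1| + |S_2| \leq \delta' |L \cup R|$ for some appropriate constant $\delta'$; since $|N(S)| \leq c|S|$, we have $|S|+|N(S)| \leq (c+1)|S| \leq (c+1)\delta|L|$, so it suffices to choose the $\delta = \delta(\eps,c,d)$ in the statement of the corollary to be a factor $c+1$ smaller than the one supplied by \Cref{thm:expansion}.

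Next I would substitute $d_1 = c$ into the conclusion of \Cref{thm:expansion}:
\begin{equation*}
    (c-1)(d_2 - 1) \leq \sqrt{(c-1)(d-1)} \cdot \bigl(1 + O(\eps + \sqrt{\gamma})\bigr).
\end{equation*}
Dividing by $c-1$ (which is positive since $c \geq 3$) and using $d_2 = \frac{c|S|}{|N(S)|}$ yields
\begin{equation*}
    \frac{c|S|}{|N(S)|} \leq 1 + \sqrt{\frac{d-1}{c-1}} \cdot \bigl(1 + O(\eps + \sqrt{\gamma})\bigr),
\end{equation*}
which is exactly the claimed inequality.

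There is no real obstacle here: the heavy lifting is done by \Cref{thm:expansion}, which itself combines \Cref{thm:biregular-subgraph} (via the Bethe Hessian and Ihara--Bass machinery) with the biregular average-degree bound of \Cref{lem:bipartite-avg-degree}. The only bookkeeping is to track how the small-set size threshold $\delta$ changes when we pass from constraints on $|S_1|+|S_2|$ to a constraint on $|S|$ alone, which costs an innocuous factor of $c+1$ absorbed into the redefinition of $\delta$.
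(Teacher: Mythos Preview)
Your proposal is correct and is exactly the approach the paper intends: the corollary is stated immediately after \Cref{thm:expansion} with no separate proof, since it follows by taking $S_1 = S$, $S_2 = N(S)$ so that $d_1 = c$ and $d_2 = c|S|/|N(S)|$, and then dividing the conclusion of \Cref{thm:expansion} by $c-1$. Your bookkeeping on the size threshold (absorbing a factor of $c+1$ into $\delta$) is the only thing left implicit in the paper.
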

\section{Unique-neighbor expanders with lossless small-set expansion}   \label{sec:ss-une}

In this section, we prove \Cref{thm:main-2}.
\restatetheorem{thm:main-2}

The graphs from \Cref{thm:main-2} are constructed by taking the tripartite line product of a tripartite graph on vertex set $L\cup M\cup R$ where $L\cup M$ and $M\cup R$ are (different) near-Ramanujan biregular graphs with no short bicycles, with suitably chosen degrees.
In particular, from the works of \cite{MOP20,OW20}, we have the following.
\begin{theorem}[Special case of main theorem in \cite{OW20}]
    For every $c,d\ge 3$ and $\gamma > 0$, there is an explicit construction of an infinite family of $(c,d)$-biregular graphs $(G_n)_{n\ge 1}$ where $\lambda_2(G_n) \le (\sqrt{c-1} + \sqrt{d-1})\cdot(1 + \gamma)$, and $G_n$ has no bicycle on $o(\sqrt{\log|V(G_n)|})$ vertices.
\end{theorem}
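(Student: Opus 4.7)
The plan is to construct $(G_n)_{n\ge 1}$ via iterated random $2$-lifts of a suitably chosen constant-sized $(c,d)$-biregular base graph $G_0$, followed by an explicit derandomization of the signings, following the strategy of \cite{MOP20,OW20}. Recall that a $2$-lift is parameterized by a signing $s: E(G) \to \{\pm 1\}$, and by Bilu--Linial the spectrum of the lifted adjacency matrix is the disjoint union of the eigenvalues of $A_G$ and those of the signed adjacency matrix $A_G^s$. Hence, to maintain $\lambda_2 \le (\sqrt{c-1}+\sqrt{d-1})(1+\gamma)$ across successive lifts, it suffices to exhibit at each step a signing $s_k$ with $\rho(A_{G_k}^{s_k}) \le (\sqrt{c-1}+\sqrt{d-1})(1+\gamma)$ that additionally preserves bicycle-freeness at the target scale.

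For the spectral bound under a uniformly random signing, I would apply the trace/moment method to the signed nonbacktracking matrix $B_{G_k}^{s_k}$; by the Ihara--Bass formula (\Cref{fact:ihara-bass}), a bound $\rho(B^s) \le \sqrt{(c-1)(d-1)}\cdot(1+\gamma)$ implies the corresponding bound on $\rho(A^s)$. The key observation is that in the expected trace of a high power of $B^s$, walks that traverse some signed edge an odd number of times cancel in expectation, leaving contributions only from walks supported on bicycle-free neighborhoods. These are in turn dominated by nonbacktracking walks on the $(c,d)$-biregular tree, which gives the claimed near-Ramanujan bound for walk length $\ell \approx \sqrt{\log n}$; this is the Bordenave/Friedman machinery ported to the biregular setting.

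For bicycle-freeness at scale $g = o(\sqrt{\log n})$, I would proceed inductively: assuming $G_k$ is bicycle-free up to depth $g$, show that a random $2$-lift preserves the property with high probability. Any bicycle in $G_{k+1}$ either projects to a bicycle in $G_k$ (excluded by the inductive hypothesis) or arises from a specific signing pattern on some subgraph of $G_k$ of total edge length at most $O(g)$. The probability that a random signing produces the bad pattern on a fixed subgraph is exponentially small in its length, and a union bound over all candidate subgraphs (of which there are at most $|V(G_k)|\cdot (\max(c,d))^{O(g)}$) shows that bicycle-freeness at scale $g$ is preserved whenever $g \ll \log |V(G_k)|$.

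The main obstacle, and the reason for the $\sqrt{\log n}$ rather than the $\log n$ scale plausibly achievable by the algebraic construction of \cite{BFGRKMW15}, lies in the derandomization step. To produce an explicit family, the random signing must be replaced by the output of a pseudorandom generator that simultaneously fools both the spectral moment statistic and the bicycle-counting statistic. The PRGs used in \cite{MOP20,OW20} only fool tests depending on $r$-local neighborhoods for $r = O(\sqrt{\log n})$ with seeds of polylogarithmic length; pushing the scale to $\log n$ would require qualitatively stronger pseudorandomness and is the principal technical barrier that fixes the bicycle-free depth at $o(\sqrt{\log n})$ in the resulting construction.
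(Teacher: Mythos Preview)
The paper does not prove this statement; it is imported verbatim as a special case of the main theorem of \cite{OW20} (building on \cite{MOP20}), and the present paper offers no argument for it. There is therefore no ``paper's own proof'' to compare your proposal against.

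That said, your outline is a fair high-level summary of the \cite{MOP20,OW20} strategy: iterated $2$-lifts of a fixed $(c,d)$-biregular base graph, Bilu--Linial spectrum splitting, a trace/moment bound on the signed nonbacktracking operator to control the new eigenvalues, inductive preservation of bicycle-freeness under random signings, and derandomization via a pseudorandom generator that fools local statistics. Two imprecisions are worth flagging. First, the passage from a bound on $\rho(B^s)$ to a bound on $\rho(A^s)$ in those works is not via the Ihara--Bass identity of \Cref{fact:ihara-bass}, which as stated concerns the unsigned Bethe Hessian and does not directly apply to signed adjacency matrices; the transfer there is done by a more direct eigenvector/quadratic-form argument. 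Second, attributing the $\sqrt{\log n}$ barrier purely to PRG seed length is an oversimplification: the scale emerges from a tradeoff internal to the iterated-lift-plus-trace-method analysis (the moment power, the bicycle-free radius required for the closed-walk counting to go through, and the number of lifting steps all interact), and the derandomization is calibrated to that scale rather than being its root cause. Neither point invalidates the sketch at the level of a plan, but both would need to be handled correctly in an actual proof.
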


\Cref{thm:main-2} is then a consequence of choosing the graphs between $L$ \& $M$, and $M$ \& $R$ using the above theorem in conjunction with the forthcoming \Cref{thm:2-sided-un-sse}.

\begin{remark}
    Replacing the above choice of base graph with one equipped with a group action and improved bicycle-free radius would result in a group action for the construction in \Cref{thm:main-2} as well as improved parameters.
    It is plausible that the construction of \cite{BFGRKMW15} satisfies these properties, but we do not prove it in this work.
\end{remark}

\subsection{Lossless expansion in high-girth graphs}

We first define the notion of \emph{bicycles} from \cite{MOP20}.

\begin{definition}[Excess] \label{def:excess}
    Given a graph $G = (V, E)$, its \emph{excess} is $\exc(G) = |E| - |V|$.
    In particular, a graph with excess $0$ and $1$ is called cyclic and bicyclic respectively.
\end{definition}

We will also need the following refinement of the well-known irregular Moore bound for graphs.
\begin{theorem}[Generalized Moore bound; formal version of \Cref{thm:moore-bound-main}] \torestate{\label{thm:moore-bound}
    Suppose $G$ is a graph on $n$ vertices, and let $\rho = \lambda_1(B_G)$ be the spectral radius of its non-backtracking matrix $B_G$.
    Suppose $\rho > 1$, then $G$ contains a cycle of size at most $2(\floor{\log_{\rho} n} + 1)$ and a bicycle of size at most $3(\floor{\log_{\rho} 2n} + 1)$.}
\end{theorem}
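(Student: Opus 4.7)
The approach is to count non-backtracking walks from a fixed directed edge using the spectral radius of $B_G$, and then apply pigeonhole on their endpoints. The only linear-algebraic input is the elementary fact that for any entrywise non-negative matrix $M$ one has $\rho(M) \le \|M\|_\infty$; applied to $B_G^{m-1}$, this produces a directed edge $e_0 = (v_0, v_1)$ for which the number $N_m(e_0) := \sum_f (B_G^{m-1})_{e_0, f}$ of non-backtracking walks of $m$ directed edges beginning with $e_0$ satisfies $N_m(e_0) \ge \rho^{m-1}$.

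\paragraph{Cycle bound.} Taking $m = \lfloor \log_\rho n \rfloor + 2$, we have $\rho^{m-1} > n$, so by pigeonhole on the $n$ possible endpoints some vertex $x$ is reached by two distinct walks $w_1, w_2$ starting with $e_0$. Let $j_1 \ge 1$ be the last index at which the edge sequences of $w_1, w_2$ still agree, and $j_2 \le m$ the smallest subsequent index at which their vertex traces meet again (which must happen by step $m$ since both walks end at $x$). Concatenating the two divergent segments produces a non-backtracking closed walk of length $2(j_2 - j_1) \le 2(m-1)$, which contains a simple cycle of length at most $2(m-1) = 2(\lfloor \log_\rho n \rfloor + 1)$.

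\paragraph{Bicycle bound.} Now take $m = \lfloor \log_\rho 2n \rfloor + 2$, so that $N_m(e_0) > 2n$ and pigeonhole yields a vertex $x$ reached by at least \emph{three} distinct walks $w_1, w_2, w_3$ starting with $e_0$. Strip the common first edge to obtain three distinct non-backtracking walks $w_i'$ of $m-1$ edges from $v_1$ to $x$, and let $U'$ be their union as a subgraph of $G$, so $|E(U')| \le 3(m-1)$. The crucial claim is that $\exc(U') \ge 1$; once this is established, pruning leaves from $U'$ (which preserves excess) gives a $2$-core that is a connected subgraph with excess $\ge 1$ and at most $3(m-1)$ edges, and therefore contains a bicycle of size at most $3(m-1) = 3(\lfloor \log_\rho 2n \rfloor + 1)$.

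\paragraph{Excess lower bound and main obstacle.} To prove $\exc(U') \ge 1$, I would establish two elementary lemmas: in any tree there is at most one non-backtracking walk of any fixed length between any two vertices (such a walk is the unique simple path), and in any connected unicyclic graph there are at most two such walks (one for each direction in which the unique cycle can be traversed, with the number of complete laps then forced by the length). Either conclusion would contradict the existence of three distinct $w_i'$, so $U'$ cannot be a tree or unicyclic and hence $\exc(U') \ge 1$. The genuinely subtle step is the unicyclic walk-counting lemma: one must handle several subcases depending on whether the endpoints lie on the cycle or in pendant trees attached to it, and verify that in each subcase a non-backtracking walk is pinned down by the direction of cycle traversal together with its length and endpoints. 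The rest of the argument is routine index tracking and pigeonholing.
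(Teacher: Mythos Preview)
Your proposal is correct, and it takes a genuinely different route from the paper's proof. The paper works on the vertex side: it uses the generating function $J(t)=\sum_s A^{(s)}t^s=(1-t^2)H_G(t)^{-1}$, the Ihara--Bass connection between $H_G$ and $B_G$, and a trace inequality from \cite{HKM23} of the form $\tr(A^{(s)})\le \sqrt{n}\,\|A^{(k)}\|_2^q\,\|A^{(r)}\|_F$. Assuming no short cycle (resp.\ bicycle), every entry of $A^{(k)}$ is at most $1$ (resp.\ $2$), so $\|A^{(k)}\|_2\le n$ (resp.\ $2n$); plugging in $t=1/\rho$ then forces $\tr(J(1/\rho))$ to converge, contradicting singularity of $H_G(1/\rho)$. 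Your argument instead stays on the edge side and is more elementary: the single inequality $\rho(B_G^{m-1})=\rho^{m-1}\le\|B_G^{m-1}\|_\infty$ directly produces a starting directed edge with at least $\rho^{m-1}$ non-backtracking walks of $m$ edges, after which everything is pigeonhole and combinatorics. This avoids the generating function, the Bethe Hessian, and the imported lemma entirely; the paper's route, in exchange, sits inside the analytic framework it uses elsewhere.

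Both arguments bottom out in the same combinatorial fact---three equal-length non-backtracking walks between two fixed vertices cannot fit inside a tree or a unicyclic graph---and both treat the unicyclic case somewhat informally. Your plan for it (fix entry and exit points on the unique cycle, observe that the direction of traversal together with the prescribed total length pins down the number of laps) is exactly what the paper sketches; your acknowledgment that the endpoint-location subcases need care is fair, but the argument goes through. One small point worth making explicit in your write-up: after showing $\exc(U')\ge 1$ and passing to the $2$-core (which is connected because the $2$-core of a connected graph is connected), if the excess exceeds $1$ you should extract an actual bicyclic subgraph---e.g.\ take a spanning tree plus two non-tree edges and prune leaves---to land on a subgraph with excess exactly $1$ and at most $3(m-1)$ edges.
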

We defer the proof of \Cref{thm:moore-bound} to \Cref{app:exact-moore-bound}.
As mentioned in \Cref{rem:generalized-moore-bound},
for a graph $G$ with average degree $d$, $\rho(B_G)$ is at least $d-1$.
This follows from the fact that $\vec{1}^\top H_G(\frac{1}{d-1}) \vec{1} = 0$ and \Cref{lem:spectral-radius}.
Therefore, \Cref{thm:moore-bound} (the cycle case) is potentially stronger than the girth guarantee of $2\log_{d-1} n$ from the classical Moore bound.

Finally, we will need the following statement about the expansion of small sets in graphs with no short cycles or bicycles, which generalizes \cite[Theorem 10]{Kah95}.
\begin{lemma}[Expansion of small sets] \label{lem:small-set-expansion}
    Let $G = (L \cup R, E)$ be a $d$-left-regular bipartite graph, and let $\eps \in (0,1)$ such that $\eps(d-1) > 1$.
    Suppose $G$ has no cycle of length at most $g$, then for all $S \subseteq L$ with $|S| \leq \frac{1}{d+1} (\eps(d-1))^{\frac{1}{4}g-\frac{1}{2}}$ we have $|N_G(S)| \geq (1-\eps) d|S|$.

    Similarly, suppose $G$ has no bicycle of length at most $g$, then for all $S \subseteq L$ with $|S| \leq \frac{1}{2(d+1)} (\eps(d-1))^{\frac{1}{6}g-\frac{1}{2}}$ we have $|N_G(S)| \geq (1-\eps) d|S|$.
\end{lemma}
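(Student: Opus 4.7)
The plan is to argue by contradiction. Suppose some $S \subseteq L$ satisfies the stated size bound but $|N_G(S)| < (1-\eps)d|S|$; I will exhibit a short cycle (respectively short bicycle) in the induced bipartite graph $H \coloneqq G[S \cup N_G(S)]$, contradicting the hypothesis on $G$. Since an induced subgraph of $G$ inherits the absence of short cycles/bicycles, this yields the desired contradiction.

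The main step is to lower bound $\rho(B_H)$, and I would do this by invoking \Cref{lem:bipartite-avg-degree} in its contrapositive form. Every $u \in S$ has all $d$ of its $G$-neighbors in $N_G(S)$, so $|E(H)| = d|S|$, giving the left/right average degrees of $H$ as $d_1 = d$ and $d_2 = d|S|/|N_G(S)| > 1/(1-\eps)$. Hence
\[
    (d_1-1)(d_2-1) \;>\; (d-1)\cdot \frac{\eps}{1-\eps} \;>\; \eps(d-1).
\]
Set $t_0 \coloneqq 1/\sqrt{\eps(d-1)}$. By the assumption $\eps(d-1) > 1$ we have $t_0 \in (0,1)$, and $(d_1-1)(d_2-1) > 1/t_0^2$. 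The contrapositive of \Cref{lem:bipartite-avg-degree} then gives $H_H(t_0) \not\succeq 0$, which via \Cref{lem:spectral-radius} yields $\rho(B_H) \ge 1/t_0 = \sqrt{\eps(d-1)}$.

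With this spectral-radius lower bound in hand, I would finish by applying the generalized Moore bound (\Cref{thm:moore-bound}) to $H$. Writing $\rho = \sqrt{\eps(d-1)} > 1$ and $n = |V(H)| \le (d+1)|S|$, \Cref{thm:moore-bound} produces a cycle in $H$ of length at most $2(\lfloor \log_{\rho} n \rfloor + 1)$ and a bicycle in $H$ of length at most $3(\lfloor \log_{\rho} 2n \rfloor + 1)$. A direct unwinding of these inequalities shows that the size bound $|S| \le \frac{1}{d+1}(\eps(d-1))^{g/4 - 1/2}$ would force the guaranteed cycle to have length at most $g$, and the size bound $|S| \le \frac{1}{2(d+1)}(\eps(d-1))^{g/6 - 1/2}$ would force the guaranteed bicycle to have length at most $g$; both contradict the hypothesis on $G$.

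The only non-routine step is the spectral-radius bound in the second paragraph, so this is where I expect the main care to be required: verifying strict versus non-strict inequalities at the boundary $|N_G(S)| = (1-\eps)d|S|$, and checking the hypothesis $t_0 \in (-1,1)\setminus\{0\}$ needed to invoke \Cref{lem:bipartite-avg-degree}. The remainder is arithmetic: translating the cycle/bicycle length bounds $2\log_\rho n + O(1)$ and $3\log_\rho(2n) + O(1)$ into the claimed thresholds on $|S|$.
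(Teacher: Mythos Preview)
Your proposal is correct and follows essentially the same approach as the paper: contradict the expansion bound, use \Cref{lem:bipartite-avg-degree} together with \Cref{lem:spectral-radius} to deduce $\rho(B_{G[S\cup N_G(S)]}) \ge \sqrt{\eps(d-1)}$, then invoke \Cref{thm:moore-bound} and unwind the arithmetic. The only cosmetic difference is that the paper applies \Cref{lem:bipartite-avg-degree} at $t = 1/\rho$ directly rather than via the contrapositive at your chosen $t_0$, but this is the same argument.
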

\begin{proof}
    Let $T \coloneqq N_G(S) \subseteq R$.
    Suppose $S$ does not expand losslessly, i.e., $|T| < (1-\eps) d |S|$.
    Then, the subgraph $G[S \cup T]$ must have right average degree at least $\frac{d|S|}{(1-\eps)d|S|} \geq \frac{1}{1-\eps} \geq 1+\eps$.
    Let $\rho > 0$ be the spectral radius of the non-backtracking matrix $B_{G[S \cup T]}$ so that $H_{G[S \cup T]}(1/\rho) \succeq 0$.
    Then, applying \Cref{lem:bipartite-avg-degree}, we have
    \begin{equation*}
        1+\eps \leq 1 + \frac{\rho^2}{(d-1)}
        \implies \rho \geq \sqrt{\eps(d-1)} \mper
    \end{equation*}
    Next, by \Cref{thm:moore-bound}, $G[S\cup T]$ must contain a cycle of size at most
    \begin{equation*}
        2\log_{\rho} (|S|+|T|) + 2
        \leq \frac{2\log((d+1)|S|)}{\log \sqrt{\eps(d-1)}} + 2 \mper
    \end{equation*}
    Suppose $|S| \leq \frac{1}{d+1} (\eps(d-1))^{\frac{1}{4}(g-2)}$, then there exists a cycle of length at most $g$, which is a contradiction.

    Similarly, by \Cref{thm:moore-bound}, $G[S\cup T]$ must contain a bicycle of size at most
    \begin{equation*}
        3 \log_{\rho}(2(|S|+|T|)) + 3 
        \leq \frac{3\log(2(d+1)|S|)}{\log \sqrt{\eps(d-1)}} + 3 \mper
    \end{equation*}
    Suppose $|S| \leq \frac{1}{2(d+1)} (\eps(d-1))^{\frac{1}{6}(g-3)}$, then there exists a bicycle of length at most $g$, which is a contradiction.
\end{proof}

\subsection{Tripartite line product}
\label{sec:tripartite-line-product}

We now define a generalization of the line product.


\begin{definition}[Tripartite line product] \label{def:tripartite-line}
    Let $G = (L \cup M \cup R, E_1 \cup E_2)$ be a tripartite graph consisting of a $(K_1, D_1)$-biregular graph $G^{(1)} = (L \cup M, E_1)$ and a $(D_2,K_2)$-biregular graph $G^{(2)} = (M \cup R, E_2)$.
    Let $H$ be a bipartite graph on vertex set $[D_1] \cup [D_2]$.
    The tripartite line product $G \Lining H$ is the bipartite graph on vertex set $L \cup R$ and edges obtained by placing a copy of $H$ on the neighbors of $v$ for each $v \in M$.
\end{definition}

\begin{remark} \label{rem:generalize-line-product}
    Note that \Cref{def:tripartite-line} is indeed a generalization of the line product in \Cref{def:line-product} (where $H$ is bipartite).
    To see this, consider a $D$-regular graph $G$ and define a tripartite graph $G'$ as follows: set $M = V(G)$, set $L, R$ to be a partition of $E(G)$, and for $v\in M$ and $e = L \cup R$, connect $\{v, e\}$ if and only if $v \in e$.
    Note that in this case $K_1 = K_2 = 2$.
    If $L, R$ satisfy that each $v\in M$ has $D_1$ neighbors in $L$ and $D_2$ neighbors in $R$ (with $D = D_1 + D_2$), then $G' \Lining H$ is exactly the same as $G \Lining H$ in \Cref{def:line-product}.
\end{remark}

We now prove \Cref{thm:main-2}; we use the tripartite line product to construct two-sided unique-neighbor expanders where we can additionally guarantee that small enough sets expand losslessly.

\begin{theorem}[Formal version of \Cref{thm:main-2}]  \label{thm:2-sided-un-sse}
    Suppose for some $c,d \geq 3$ and all $\gamma > 0$, there exists an explicit infinite family of $(c,d)$-biregular near-Ramanujan graphs $(F_n)_{n\geq 1}$ with $\lambda_2(F_n) \leq (\sqrt{c-1} + \sqrt{d-1}) (1 + \gamma)$ and has no bicycle of size at most $g_n = \omega_n(1)$.

    For every $\beta\in(0,1/2]$ and $\eps \in (0,1)$, there are $K = K(\eps)$, $d_0 = d_0(\beta,\eps)$ and $\delta = \delta(\beta,\eps) > 0$ such that for all $d_1,d_2\ge d_0$ which are multiples of $K$ and satisfy $1\ge\frac{d_1}{d_2} \ge \frac{\beta}{1-\beta}$, the following holds:
    there exists $\mu = \mu(\beta,\eps,d_1,d_2) > 0$ such that there is an infinite family of $(d_1, d_2)$-biregular graphs $(Z_n)_{n\ge 1}$ where
    \begin{equation*}
        \abs*{\UN_{Z_n}(S)} \ge \delta \cdot d_1\cdot |S| \mcom
        \quad
        \forall S \subseteq V(Z_n) \text{ with }|S| \le \mu \cdot \abs*{V(Z_n)} \mper
    \end{equation*}
    Moreover,
    \begin{equation*}
    \begin{aligned}
        &\abs*{N_{Z_n}(S)} \ge (1-\eps) \cdot d_1\cdot |S| \mcom
        \quad \forall S \subseteq L(Z_n) \text{ with }|S| \le \exp(g_n) \mcom \\
        &\abs*{N_{Z_n}(S)} \ge (1-\eps) \cdot d_2\cdot |S| \mcom
        \quad \forall S \subseteq R(Z_n) \text{ with }|S| \le \exp(g_n) \mper
    \end{aligned}
    \end{equation*}
\end{theorem}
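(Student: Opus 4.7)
The plan is to instantiate the tripartite line product $Z_n = G_n \Lining H$, where $G_n$ is a tripartite graph whose two constituents $G^{(1)}$ (between $L$ and $M$) and $G^{(2)}$ (between $M$ and $R$) are both taken from the hypothesized near-Ramanujan, bicycle-free family $(F_n)_{n\ge 1}$, and $H$ is a constant-sized $(\wt d_1, \wt d_2)$-biregular gadget on $[D_1]\cup[D_2]$ supplied by \Cref{lem:gadget}. I would choose degrees so that $d_1 = K_1 \wt d_1$, $d_2 = K_2 \wt d_2$, take $K = \mathrm{lcm}(K_1, K_2)$, and impose the size conditions
\begin{equation*}
    \wt d_1 \;\ge\; \tfrac{C}{\eps} \sqrt{K_2 D_2}, \qquad \wt d_2 \;\ge\; \tfrac{C}{\eps} \sqrt{K_1 D_1}
\end{equation*}
for a sufficiently large constant $C$. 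Biregularity of $Z_n$ is immediate: each $v \in L$ lies in $K_1$ gadgets, each contributing $\wt d_1$ edges, for a total of $d_1$; symmetrically for the right.

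For the constant-factor two-sided claim on sets of size $|S| \le \mu |V(Z_n)|$, by symmetry consider $S \subseteq L(Z_n)$ and set $U = N_{G^{(1)}}(S) \subseteq M$. The first step is to split $U = U_\ell \cup U_h$ according to whether $e_{G^{(1)}}(\{v\}, S)$ is below or above a threshold $\kappa \asymp \sqrt{D_1}$, and apply \Cref{thm:expansion} to $G^{(1)}[S \cup U_h]$: its right average degree exceeds $\kappa$ by construction, forcing the left average degree to be at most $1 + O(\sqrt{K_1 D_1}/\kappa) \ll K_1$, so only a tiny fraction of the $S$-incident edges of $G^{(1)}$ can land in $U_h$. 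The second step invokes \Cref{lem:gadget} inside each $v \in U_\ell$: since its $S$-degree is at most $\kappa \ll \sqrt{D_1+D_2}$, the gadget's unique-neighbor profile produces $\Omega(\wt d_1 \cdot e_{G^{(1)}}(\{v\}, S))$ right-vertices in $H(v)$ with exactly one edge back to $S$ within that gadget; aggregating over $U_\ell$ yields a set $\wt T \subseteq R$ of size $\Omega(d_1 |S|)$. To upgrade gadget-unique to $Z_n$-unique, I apply \Cref{thm:expansion} again to $G^{(2)}[U \cup \wt T]$: its left average degree is at least $\wt d_1$, so the right average degree is at most $1 + O(\sqrt{K_2 D_2}/\wt d_1) \le 1 + O(\eps)$, meaning at most an $O(\eps)$-fraction of $\wt T$ receives a second $G^{(2)}$-edge from $U$ and is thus killed as a unique-neighbor. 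The survivors give $\Omega(d_1 |S|)$ genuine unique-neighbors of $S$ in $Z_n$.

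For the lossless expansion claim on the smaller range $|S| \le \exp(g_n)$, I would invoke \Cref{lem:small-set-expansion} on $G^{(1)}$ (whose bicycle-free radius is $g_n$) to deduce $|U| \ge (1-\eps) K_1 |S|$, which means a $(1-\eps)$ fraction of the $S$-incident $G^{(1)}$-edges hit middle vertices of $S$-degree exactly $1$. Each such vertex contributes $\wt d_1$ right neighbors via its gadget, producing a preliminary set $\wt T$ of total size $(1-O(\eps)) d_1 |S|$ counted with multiplicity. Reapplying \Cref{thm:expansion} to $G^{(2)}[U \cup \wt T]$ exactly as in the previous paragraph loses only an $O(\eps)$ fraction to cross-gadget collisions, giving $|N_{Z_n}(S)| \ge (1-O(\eps)) d_1 |S|$. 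The right-side bound is obtained symmetrically using the size condition on $\wt d_2$.

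The main technical obstacle I anticipate is the simultaneous parameter balancing: the threshold $\kappa$ must be small enough to keep each vertex of $U_\ell$ in the regime where \Cref{lem:gadget} delivers $\Omega(\wt d_1)$ unique-neighbors per $S$-edge, yet large enough for \Cref{thm:expansion} to push most $G^{(1)}$-edges into $U_\ell$; meanwhile the sizing $\wt d_i \gg \sqrt{K_{3-i} D_{3-i}}/\eps$, the aspect-ratio constraint $d_1/d_2 \ge \beta/(1-\beta)$, and the \Cref{lem:gadget} requirement linking $\wt d_1 + \wt d_2$ to $\sqrt{D_1+D_2}$ must all be consistent. As the end of \Cref{sec:technical-overview} notes, these constraints provably rule out two-sided \emph{lossless} expansion for the full range $|S| \le \mu |V(Z_n)|$, so the proof must be content with constant-factor expansion on the large range and leverage the bicycle-free hypothesis to upgrade to losslessness only on the small range $|S| \le \exp(g_n)$.
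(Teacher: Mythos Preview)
Your proposal is correct and follows essentially the same approach as the paper: the same tripartite line product construction, the same two-step analysis (threshold split on $U$ via \Cref{thm:expansion}, then collision control on $G^{(2)}[U\cup\wt T]$ via \Cref{thm:expansion} again), and the same use of \Cref{lem:small-set-expansion} for the lossless small-set regime. The paper makes the specific choice $K_1=K_2=1000/\eps^2$ and threshold $\sqrt{D}$ (with $D=D_1+D_2$), and is slightly more careful about the left average degree of $G^{(2)}[U\cup\wt T]$---since vertices in $U_h$ need not contribute to $\wt T$, one first bounds $|U_h|\le 0.25|U_\ell|$ to get average degree $\ge 0.64\,\wt d_1$ rather than $\wt d_1$---but this is a minor bookkeeping point and your outline is otherwise the paper's proof.
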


\begin{proof}
    The construction of $Z_n$ is based on taking the tripartite line product of some $G_n$ and a bipartite gadget graph $H$ from \Cref{lem:gadget} with suitably chosen parameters.

    \begin{itemize}
        \item Let $K = K_1 = K_2 = \frac{1000}{\eps^2}$.
        
        \item Let $d_0 \geq C_0 K$ where $C_0 = C_0(\eps,\beta)$ is some large enough constant chosen later.
        

        \item Let $\wt{d}_1 \coloneqq d_1/ K$ and $\wt{d}_2 \coloneqq d_2 / K$, both at least $C_0$.

        \item Let $\theta \coloneqq \frac{C}{\eps} \sqrt{K /\beta}$ (depending only on $\eps,\beta$) where $C$ is a universal constant.

        \item Let $D_1 \coloneqq \ceil*{\frac{\wt{d}_1+\wt{d}_2}{\theta^2}}\cdot \wt{d}_2$ and $D_2 \coloneqq \ceil*{\frac{\wt{d}_1+\wt{d}_2}{\theta^2}}\cdot \wt{d}_1$, and define $D \coloneqq D_1 + D_2$.
    \end{itemize}

    Note that $1 \geq \wt{d}_1 / \wt{d}_2 = d_1 / d_2 \geq \frac{\beta}{1-\beta}$.
    One can verify that $K D_1 \leq 2\eps^2 \wt{d}_2^2/C^2$ and $K D_2 \leq 2\eps^2 \wt{d}_1^2/C^2$.

    The above choice of parameters satisfy the requirements of \Cref{lem:gadget}.
    Thus, applying \Cref{lem:gadget} with parameters $\theta$ and $\tau=1$, there is a $(\wt{d}_1,\wt{d}_2)$-biregular graph $H$ with $D_1$ left vertices and $D_2$ right vertices such that
    \begin{equation*}
        \min_{S\subseteq V(H): 1 \le |S| \le t} \frac{|\UN_H(S)|}{|S|}  \geq (1-o_D(1)) \cdot \wt{d}_1 \cdot \exp(-\theta t/\sqrt{D})
        \numberthis \label{eq:gadget-expansion}
    \end{equation*}
    for $1 \leq t \leq \sqrt{D}$.
    We can set $C_0$ large enough (depending only on $\beta,\theta$ which only depend on $\eps,\beta$) such that the $o_D(1)$ term is at most $0.1$.

    For the tripartite base graph $G_n = (L \cup M \cup R, E_1 \cup E_2)$, we construct $G_n^{(1)} = (L \cup M, E_1)$ and $G_n^{(2)} = (M \cup R, E_2)$ to be $(K_1, D_1)$ and $(D_2, K_2)$-biregular near-Ramanujan graphs respectively, i.e., $\lambda_2(G_n^{(1)}) \leq (\sqrt{K_1-1} + \sqrt{D_1-1})(1+0.01/D_1)$ and $\lambda_2(G_n^{(2)}) \leq (\sqrt{K_2-1} + \sqrt{D_2-1}) (1+ 0.01/D_2)$, along with the small-bicycle-free assumption.


    \parhead{Unique-neighbor expansion.}
    Next, we analyze the vertex expansion of a subset $S \subseteq L(Z_n)$ in the product graph $Z_n$.
    Recall that $L(Z_n) = L$.
    Let $U \coloneqq N_{G_n}(S) \subseteq M$ be the neighbors of $S$ in $G_n^{(1)}$, and we partition $U$ into $U_\ell \coloneqq \{v\in U: |E_1(v, S)| \leq \sqrt{D} \}$ (the ``low $S$-degree'' vertices) and $U_h \coloneqq U \setminus U_\ell$ (the ``high $S$-degree'' vertices).
    Consider the bipartite subgraph induced by $S \cup U_h$.
    By definition, the average right-degree in $G_n^{(1)}[S \cup U_h]$ is at least $\sqrt{D}$.
    By the upper bound on $\lambda_2(G_n^{(1)})$, we can apply \Cref{thm:expansion} and bound the average left-degree by
    \begin{equation*}
        d_{\text{left}}(S,U_h) \leq 1 + \frac{\sqrt{(K_1-1)(D_1-1)}}{\sqrt{D}-1} \cdot 1.1
        \leq 1 + 1.2\sqrt{K} \mcom
    \end{equation*}
    as long as $|S| \leq \mu|L|$ for some $\mu = \mu(K, D_1) > 0$ (depending only on $\eps,\beta, d_1, d_2$).
    For any $K \geq 100$, the above is at most $0.2 K$.
    Thus, we know that $|E_1(S, U_\ell)| \geq 0.8 K|S|$, i.e., a constant fraction of edges incident to $S$ go to $U_{\ell}$.
    This also implies that $|U_h| \leq 0.2|U|$.

    For each $v\in U$, let $S_v \subseteq S$ be the vertices in $S$ incident to $v$.
    Consider the gadget $H$ placed on $v$, and let $T_v \subseteq R$ be the set of unique-neighbors of $S_v$ in the gadget.
    Further, let $\wt{T} \coloneqq \bigcup_{v\in U} T_v$.
    Note that each vertex in $\wt{T}$ is a unique-neighbor \emph{within some gadget}, but there may be edges coming from other gadgets, so not all of $\wt{T}$ are unique-neighbors of $S$ in the final product graph.
    Our goal is to show that a large fraction of $\wt{T}$ are unique-neighbors of $S$.
    
    We will analyze the induced subgraph $G_n^{(2)}[U \cup \wt{T}]$, and we claim that a large fraction of $\wt{T}$ are unique-neighbors of $U$ in $G_n^{(2)}$, thus are also unique-neighbors of $S$ in $Z_n$.
    We first lower bound the left average degree of $G_n^{(2)}[U \cup \wt{T}]$.
    For each $v\in U_{\ell}$, we have $1 \leq |S_v| \leq \sqrt{D}$, and by the expansion profile of the gadget (\Cref{eq:gadget-expansion}), $v$ has degree at least
    \begin{equation*}
        |S_v| \cdot 0.9 \cdot \wt{d}_1 \cdot \exp(-\theta |S_v|/\sqrt{D})
        \geq 0.9\cdot \wt{d}_1  \cdot \min\Set{e^{-\theta/\sqrt{D}}, \sqrt{D} e^{-\theta}}
    \end{equation*}
    in $G_n^{(2)}[U \cup \wt{T}]$.
    Since $\theta$ depends only on $\eps,\beta$, we choose $C_0 = C_0(\eps,\beta)$ to be large enough (thus also $D$) such that the above is at least $0.8 \cdot \wt{d}_1$.

    Next, for $v\in U_h$, we have no control over its degree in $G_n^{(2)}[U \cup \wt{T}]$.
    However, since $|U_h| \leq 0.2|U| \leq \frac{1}{4}|U_{\ell}|$, we have
    \begin{equation*}
        d_{\text{left}}(U, \wt{T}) \geq \frac{0.8\wt{d}_1 \cdot |U_\ell|}{|U_{\ell}|+ |U_h|} 
        \geq 0.64\cdot \wt{d}_1 \mper
    \end{equation*}
    Then, for $|S| \leq \mu|L|$ where $\mu$ is small enough, we have $|U| \leq \mu'|M|$ where $\mu'$ (depending on $\eps,\beta,K,D_2$) is small enough to apply \Cref{thm:expansion} and conclude that the right average degree
    \begin{equation*}
        d_{\text{right}}(U, \wt{T}) \leq 1 + \frac{\sqrt{(K_2-1)(D_2-1)}}{0.64 \cdot \wt{d}_1-1} \cdot O(1)
        \leq 1.1 \mcom
    \end{equation*}
    since $\wt{d}_1 + \wt{d}_2 \leq \frac{1}{\beta} \wt{d}_1$ and  $K_2 D_2 \leq \wt{d_1}^2 / C$ with some large $C$  by our choice of $\theta$ and $D_2$.
    This implies that $0.9$ fraction of $\wt{T}$ are unique-neighbors of $S$.

    Finally, we lower bound $|E_2(U, \wt{T})|$. Again by \Cref{eq:gadget-expansion},
    \begin{equation*}
    \begin{aligned}
        |E_2(U,\wt{T})| &\geq \sum_{v\in U_{\ell}} |S_v| \cdot 0.9 \cdot \wt{d}_1 \cdot \exp(-\theta |S_v|/\sqrt{D})
        \geq 0.9 \cdot \wt{d}_1 \cdot \exp(-\theta) \sum_{v\in U_{\ell}} |S_v| \\
        &= 2\delta \cdot \wt{d}_1 \cdot |E_1(S, U_{\ell})|
        \geq 1.6\delta d_1|S|\mcom
    \end{aligned}
    \end{equation*}
    where $\delta = \delta(\eps,\beta) > 0$.
    The last inequality uses the fact that $|E_1(S,U_\ell)| \geq 0.8 K|S|$ and $d_1 = K\wt{d}_1$.
    With $d_{\text{right}}(U, \wt{T}) \leq 1.1$, it follows that
    \begin{equation*}
        |\UN_{Z_n}(S)| \geq \delta d_1|S| \mper
    \end{equation*}



    For $S \subseteq R(Z_n)$, the analysis is completely symmetric.
    Indeed, we have $K_1 = K_2$, and we can verify that $K_1 D_1 \leq \wt{d_2}^2 / C$.
    Since $d_1 \leq d_2$, the unique-neighbor lower bound holds for all $S \subseteq V(Z_n)$ with $|S| \leq \mu|V(Z_n)|$.

    \parhead{Small set lossless expansion.}
    We now turn to the expansion of small subsets $S \subseteq L(Z_n)$.
    Let $U \coloneqq N_{G_n^{(1)}}(S) \subseteq M$ and $T \coloneqq N_{Z_n}(S) \subseteq R$.
    By assumption, $G_n^{(1)}$ has no bicycle of size at most $g_n$, thus \Cref{lem:small-set-expansion} states that $|U| \geq (1-\eps/2) K|S|$ (i.e., $S$ expands losslessly in $G_n^{(1)}$) assuming that
    \begin{equation*}
        |S| \leq \frac{1}{2(K+1)} \Paren{\frac{\eps}{2}(K-1)}^{\frac{1}{6}g_n - \frac{1}{2}} \mper
    \end{equation*}
    With our choice of $K$ and $g_n = \omega_n(1)$, it suffices that $|S| \leq \exp(g_n)$.

    Next, as each gadget on $v\in U$ expands with a factor of at least $\wt{d}_1$, we can lower bound $|E_2(U, T)|$ by $\wt{d}_1 \cdot |U|$.
    Moreover, the left average degree of the induced subgraph $G_n^{(2)}[U \cup T]$ is at least $\wt{d}_1$.
    Then, by \Cref{thm:expansion}, the right average degree of $G_n^{(2)}[U \cup T]$ is
    \begin{equation*}
        d_{\text{right}}(U, T) \leq 1 + \frac{\sqrt{(K_2-1)(D_2-1)}}{\wt{d}_1-1} \cdot O(1)
        \leq 1 + \frac{\eps}{2} \mcom
    \end{equation*}
    given that $K_2 D_2 \leq \eps^2 \wt{d}_1^2 / C$ for a large enough constant $C$.
    Thus, since $|E_2(U,T)| \geq \wt{d}_1 \cdot |U| \geq (1-\eps/2) \cdot K\wt{d}_1 \cdot |S| = (1-\eps/2) d_1|S|$,
    \begin{equation*}
        |T| \geq  \frac{1}{1+\eps/2} |E_2(U, T)| \geq \frac{1-\eps/2}{1+\eps/2} \cdot d_1 |S|
        \geq (1-\eps) d_1 |S| \mper
    \end{equation*}
    For $S \subseteq R(Z_n)$, the analysis is symmetric with $d_1, \wt{d}_1$ replaced by $d_2, \wt{d}_2$.
\end{proof}
\section{One-sided lossless expanders}  \label{sec:one-sided-lossless}

In this section, we illustrate how the construction of Golowich \cite{Gol23} can be instantiated using the tripartite line product, and give a succinct proof of lossless expansion using our results on subgraphs of spectral expanders.

\begin{theorem}[One-sided lossless expanders] \label{thm:lossless-expander}
    For every $\beta\in(0,1/2]$ and $\eps \in (0,1)$, there are $K = K(\eps)$, $d_0 = d_0(\beta,\eps)$ such that for all $d_1,d_2\ge d_0$ which are multiples of $K$ and satisfy $1\ge\frac{d_1}{d_2} \ge \frac{\beta}{1-\beta}$, the following holds:
    there exists $\mu = \mu(\beta,\eps,d_1,d_2) > 0$ such that there is an infinite family of $(d_1, d_2)$-biregular graphs $(Z_n)_{n\ge 1}$ where

    \begin{equation*}
        |N_{Z_n}(S)| \geq (1-\eps) \cdot d_1 \cdot |S| \mcom \quad \forall S \subseteq L(Z_n)\mcom\ |S| \leq \mu |L(Z_n)| \mper
    \end{equation*}
\end{theorem}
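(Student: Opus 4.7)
The plan is to instantiate the tripartite line product exactly as in the proof of \Cref{thm:2-sided-un-sse}, but with $K_2 = 1$ so that $G_n^{(2)}$ is just a disjoint union of stars centered at vertices in $M$. This means every vertex of $R$ lies in a unique gadget, so there are no collisions between unique-neighbors generated in distinct gadgets, and we no longer need to control the right average degree of $G_n^{(2)}[U \cup \wt{T}]$ to rule out collisions. The rest of the construction follows the same template: $G_n^{(1)}$ is a $(K_1, D_1)$-biregular near-Ramanujan graph between $L$ and $M$; the gadget $H$ is the $(\wt{d}_1, \wt{d}_2)$-biregular graph produced by \Cref{lem:gadget} with $D_1 + D_2 = D$, $\wt{d}_1 = d_1/K_1$ and $\wt{d}_2 = d_2$ (so that $K_2 = 1$ yields $d_2 = \wt d_2$).

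Fix an $S \subseteq L(Z_n)$ with $|S| \le \mu |L|$, and let $U = N_{G_n^{(1)}}(S) \subseteq M$. The key parameter choice, following the technical overview, is to use a smaller $S$-degree threshold: partition $U$ into $U_\ell = \{v \in U : |E_1(v,S)| \le \eps' \sqrt{D}\}$ and $U_h = U \setminus U_\ell$ for a suitably small $\eps' = \eps'(\eps)$. Applying \Cref{thm:expansion} to $G_n^{(1)}[S \cup U_h]$, whose right average degree is at least $\eps'\sqrt{D}$, bounds the left average degree by $1 + O(\sqrt{K_1 D_1}/(\eps'\sqrt{D})) = 1 + O(\sqrt{K_1}/\eps')$. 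Choosing $K_1$ sufficiently large in terms of $\eps$, we can ensure $|E_1(S, U_h)| \le (\eps/2) K_1 |S|$, so at least a $(1 - \eps/2)$ fraction of edges from $S$ land in $U_\ell$.

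For each $v \in U_\ell$ the gadget at $v$ sees at most $\eps' \sqrt{D}$ vertices of $S$; by \Cref{lem:gadget} with parameter $\tau = \eps'$ and $\theta$ chosen so that $\exp(-\theta \tau) \ge 1 - \eps/2$, the number of unique-neighbors within that gadget is at least $(1-\eps/2)\wt{d}_1 \cdot |S_v|$. Since $G_n^{(2)}$ is a disjoint collection of stars, these unique-neighbors in distinct gadgets are automatically disjoint, and each such unique-neighbor within a gadget is also a unique-neighbor of $S$ in $Z_n$ (it has a single edge from $S_v$ within the gadget and no edges from $S$ outside, by the star structure). Summing over $v \in U_\ell$ yields
\begin{equation*}
    |N_{Z_n}(S)| \ge (1-\eps/2) \wt{d}_1 \cdot |E_1(S, U_\ell)| / K_1 \cdot K_1 \ge (1 - \eps/2)^2 \wt{d}_1 K_1 |S| \ge (1-\eps) d_1 |S|,
\end{equation*}
as required.

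The main obstacle is calibrating the parameters $K_1, D_1, D_2, \wt d_1, \wt d_2, \eps', \theta$ simultaneously so that (i) \Cref{thm:expansion} applies with an error slack of $O(\eps)$ on the left-average-degree bound, (ii) the gadget produced by \Cref{lem:gadget} has the required biregularity with $d_1 = K_1 \wt d_1$, $d_2 = \wt d_2$ while satisfying the aspect ratio constraint $\wt d_1 / \wt d_2 = D_2 / D_1$ inherited from the aspect ratio $d_1 / d_2 \in [\beta/(1-\beta), 1]$, and (iii) the lossless factor in the gadget's expansion profile, $\exp(-\theta \tau)$, is at least $1 - \eps/2$ at the chosen threshold $\tau = \eps'$. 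None of these individually is delicate, but keeping them mutually consistent (especially maintaining the ratio constraint when $K_2 = 1$ forces a large imbalance between $K_1$ and $K_2$) requires care. Once the parameters are pinned down, $\mu$ is dictated by the applicability condition of \Cref{thm:expansion} on $G_n^{(1)}[S \cup U_h]$.
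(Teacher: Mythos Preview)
Your proposal is correct and follows essentially the same route as the paper: set $K_2=1$ so $G_n^{(2)}$ is a disjoint union of stars, use a threshold of order $\eps\sqrt{D}$ to split $U$ into $U_\ell$ and $U_h$, apply \Cref{thm:expansion} to show almost all edges from $S$ land in $U_\ell$, and then use the gadget's near-lossless expansion on each $v\in U_\ell$ together with the star structure to conclude. The paper's specific choices are $K_1=\lceil 2^{10}/\eps^4\rceil$, $\theta=1$ (which is forced by taking $D=(\wt d_1+\wt d_2)^2$, not freely chosen as your phrasing suggests), and threshold $\tfrac{\eps}{4}\sqrt{D}$; your displayed inequality has a stray ``$/K_1\cdot K_1$'' but the intended chain $(1-\eps/2)\wt d_1\,|E_1(S,U_\ell)| \ge (1-\eps/2)^2 K_1\wt d_1\,|S| = (1-\eps/2)^2 d_1|S|$ is exactly the paper's.
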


\begin{proof}
    The construction of $Z_n$ is the same as \Cref{thm:2-sided-un-sse}, except this time we take $K_1 \gg K_2 = 1$ and $\theta = 1$.

    \begin{itemize}
        \item Let $K_1 = \ceil*{\frac{2^{10}}{\eps^4}}$ and $K_2 = 1$.

        \item Let $d_0 \geq C_0 K_1$ where $C_0 = C_0(\eps,\beta)$ is some large enough constant chosen later.
        

        \item Let $\wt{d}_1 \coloneqq d_1/ K_1$ and $\wt{d}_2 \coloneqq d_2$, both at least $C_0$.

        \item Let $\theta \coloneqq 1$.

        \item Let $D_1 \coloneqq (\wt{d}_1+\wt{d}_2) \cdot \wt{d}_2$ and $D_2 \coloneqq (\wt{d}_1+\wt{d}_2) \cdot \wt{d}_1$, and define $D \coloneqq D_1 + D_2$.

    \end{itemize}

    Since $d_1 \leq d_2$, we have $\wt{d}_1 \leq \wt{d}_2$.
    This choice of parameters satisfy the requirements of \Cref{lem:gadget}.
    Thus, applying \Cref{lem:gadget} with parameters $\theta = \tau = 1$, there is a $(\wt{d}_1,\wt{d}_2)$-biregular graph $H$ with $D_1$ left vertices and $D_2$ right vertices such that
    \begin{equation*}
        \min_{S\subseteq V(H): |S| \le t} \frac{|N_H(S)|}{|S|}  \geq (1-o_D(1)) \cdot \wt{d}_1 \cdot \exp(- t/\sqrt{D})
        \numberthis \label{eq:gadget-expansion-theta-1}
    \end{equation*}
    for $1 \leq t \leq \sqrt{D}$.
    We can set $C_0$ large enough (depending only on $\eps,\beta$) such that the $o_D(1)$ term is at most $\eps/8$.

    For the tripartite base graph $G_n = (L \cup M \cup R, E_1 \cup E_2)$, we construct $G_n^{(1)} = (L \cup M, E_1)$ to be a $(K_1, D_1)$-biregular near-Ramanujan graph, and $G_n^{(2)} = (M \cup R, E_2)$ is simply a $(D_2, 1)$-biregular graph.

    Next, we analyze the vertex expansion of a subset $S \subseteq L(Z_n)$ in the product graph $Z_n$.
    Similar to the proof of \Cref{thm:2-sided-un-sse}, let $U \coloneqq N_{G_n}(S) \subseteq M$ be the neighbors of $S$ in $G_n^{(1)}$, and we partition $U$ into $U_\ell \coloneqq \{v\in U: |E_1(v, S)| \leq \frac{\eps}{4} \sqrt{D} \}$ (the ``low $S$-degree'' vertices) and $U_h \coloneqq U \setminus U_\ell$ (the ``high $S$-degree'' vertices).
    Consider the bipartite subgraph induced by $S \cup U_h$.
    By definition, the right average degree in $G_n^{(1)}[S \cup U_h]$ is at least $\frac{\eps}{4}\sqrt{D}$.
    By \Cref{thm:expansion}, we can bound the left average degree by
    \begin{equation*}
        d_{\text{left}}(S,U_h) \leq 1 + \frac{\sqrt{(K_1-1)(D_1-1)}}{\frac{\eps}{4}\sqrt{D}-1} \cdot 1.1
        \leq \frac{8}{\eps}\sqrt{K_1}
    \end{equation*}
    as long as $|S| \leq \mu|L|$ for some $\mu = \mu(K_1,D_1) > 0$ (depending only on $\eps,\beta, d_1, d_2$).
    Since $K_1 \geq \frac{2^{10}}{\eps^4}$, the above is at most $\frac{\eps}{4} K_1$.
    Thus, we know that $|E_1(S, U_\ell)| \geq (1-\eps/4)K_1|S|$, i.e., most edges incident to $S$ go to $U_{\ell}$.

    Moreover, for each $v\in U_\ell$, the gadget placed on $H$ has at most $\frac{\eps}{4}\sqrt{D}$ vertices on the left, thus by \Cref{eq:gadget-expansion-theta-1} each gadget expands losslessly.
    Specifically, denoting $T \coloneqq N_{Z_n}(S) \subseteq R$, we have that
    \begin{equation*}
        |E_2(v,T)| \geq |E_1(v,S)| \cdot (1-o_D(1)) \cdot \wt{d}_1 \cdot \exp(-\eps/4)
        \geq (1-\eps/2) \cdot \wt{d}_1 \cdot |E_1(v,S)|
    \end{equation*}
    for large enough $C_0$ (hence large enough $D$).

    Finally, since $G_n^{(2)}$ is a $(D_2, 1)$-biregular graph,
    \begin{equation*}
        |T| = \sum_{v\in U} |E_2(v,T)| \geq (1-\eps/2) \cdot \wt{d}_1 \cdot |E_1(S, U_{\ell})|
        \geq (1-\eps) \cdot d_1 \cdot |S| \mper
    \end{equation*}
    This completes the proof.
\end{proof}

\newpage 
\bibliographystyle{alpha}
\bibliography{main}

\newcommand{\etalchar}[1]{$^{#1}$}
\begin{thebibliography}{CRVW02}

\bibitem[ABN23]{ABN23}
Anurag Anshu, Nikolas~P Breuckmann, and Chinmay Nirkhe.
\newblock {NLTS Hamiltonians from good quantum codes}.
\newblock In {\em Proceedings of the 55th Annual ACM Symposium on Theory of
  Computing}, pages 1090--1096, 2023.

\bibitem[AC88]{AC88}
Noga Alon and Fan~RK Chung.
\newblock Explicit construction of linear sized tolerant networks.
\newblock {\em Discrete Mathematics}, 72(1-3):15--19, 1988.

\bibitem[AC02]{AC02}
Noga Alon and Michael Capalbo.
\newblock Explicit unique-neighbor expanders.
\newblock In {\em The 43rd Annual IEEE Symposium on Foundations of Computer
  Science, 2002. Proceedings.}, pages 73--79. IEEE, 2002.

\bibitem[AD23]{AD23}
Ron Asherov and Irit Dinur.
\newblock {Bipartite unique-neighbour expanders via Ramanujan graphs}.
\newblock {\em arXiv preprint arXiv:2301.03072}, 2023.

\bibitem[AHL02]{AHL02}
Noga Alon, Shlomo Hoory, and Nathan Linial.
\newblock The {M}oore bound for irregular graphs.
\newblock {\em Graphs and Combinatorics}, 18(1):53--57, 2002.

\bibitem[ALM96]{ALM96}
Sanjeev Arora, FT~Leighton, and Bruce~M Maggs.
\newblock {On-Line Algorithms for Path Selection in a Nonblocking Network}.
\newblock {\em SIAM Journal on Computing}, 25(3):600--625, 1996.

\bibitem[Bas92]{Bass92}
Hyman Bass.
\newblock {The Ihara-Selberg zeta function of a tree lattice}.
\newblock {\em International Journal of Mathematics}, 3(06):717--797, 1992.

\bibitem[Bec16]{Bec16}
Oren Becker.
\newblock {Symmetric unique neighbor expanders and good LDPC codes}.
\newblock {\em Discrete Applied Mathematics}, 211:211--216, 2016.

\bibitem[BFG{\etalchar{+}}15]{BFGRKMW15}
Cristina Ballantine, Brooke Feigon, Radhika Ganapathy, Janne Kool, Kathrin
  Maurischat, and Amy Wooding.
\newblock Explicit construction of {R}amanujan bigraphs.
\newblock In {\em Women in Numbers Europe: Research Directions in Number
  Theory}, pages 1--16. Springer, 2015.

\bibitem[BGI{\etalchar{+}}08]{BGI+08}
Radu Berinde, Anna~C Gilbert, Piotr Indyk, Howard Karloff, and Martin~J
  Strauss.
\newblock {Combining geometry and combinatorics: A unified approach to sparse
  signal recovery}.
\newblock In {\em 2008 46th Annual Allerton Conference on Communication,
  Control, and Computing}, pages 798--805. IEEE, 2008.

\bibitem[BV09]{BV09}
Eli {Ben-Sasson} and Michael Viderman.
\newblock {Tensor products of weakly smooth codes are robust}.
\newblock {\em Theory of Computing}, 5(1):239--255, 2009.

\bibitem[CRTS23]{Coh}
Itay Cohen, Roy Roth, and Amnon Ta-Shma.
\newblock {HDX condensers}.
\newblock In {\em 2023 IEEE 64th Annual Symposium on Foundations of Computer
  Science (FOCS)}. IEEE, 2023.

\bibitem[CRVW02]{CRVW02}
Michael Capalbo, Omer Reingold, Salil Vadhan, and Avi Wigderson.
\newblock Randomness conductors and constant-degree lossless expanders.
\newblock In {\em Proceedings of the 34th Annual ACM Symposium on Theory of
  Computing}, pages 659--668, 2002.

\bibitem[DEL{\etalchar{+}}22]{DEL+22}
Irit Dinur, Shai Evra, Ron Livne, Alexander Lubotzky, and Shahar Mozes.
\newblock {Locally Testable Codes with constant rate, distance, and locality}.
\newblock In {\em Proceedings of the 54th Annual ACM SIGACT Symposium on Theory
  of Computing}, pages 357--374, 2022.

\bibitem[DFHT21]{DFHT21}
Irit Dinur, Yuval Filmus, Prahladh Harsha, and Madhur Tulsiani.
\newblock Explicit sos lower bounds from high-dimensional expanders.
\newblock In {\em 12th Innovations in Theoretical Computer Science Conference
  (ITCS 2021)}, 2021.

\bibitem[DFR{\v{S}}17]{DFRS17}
Andrzej Dudek, Alan Frieze, Andrzej Ruci{\'n}ski, and Matas {\v{S}}ileikis.
\newblock {Embedding the Erd{\H{o}}s--R{\'e}nyi hypergraph into the random
  regular hypergraph and Hamiltonicity}.
\newblock {\em Journal of Combinatorial Theory, Series B}, 122:719--740, 2017.

\bibitem[DHLV22]{DHLV22}
Irit Dinur, Min-Hsiu Hsieh, Ting-Chun Lin, and Thomas Vidick.
\newblock {Good Quantum LDPC Codes with Linear Time Decoders}.
\newblock {\em arXiv preprint arXiv:2206.07750}, 2022.

\bibitem[DSW06]{DSW06}
Irit Dinur, Madhu Sudan, and Avi Wigderson.
\newblock {Robust local testability of tensor products of LDPC codes}.
\newblock In {\em Approximation, Randomization, and Combinatorial Optimization.
  Algorithms and Techniques.}, pages 304--315. Springer, 2006.

\bibitem[FK16]{FK16}
Alan Frieze and Micha{\l} Karo{\'n}ski.
\newblock {\em {Introduction to random graphs}}.
\newblock Cambridge University Press, 2016.

\bibitem[FM17]{FM17}
Zhou Fan and Andrea Montanari.
\newblock How well do local algorithms solve semidefinite programs?
\newblock In {\em Proceedings of the 49th Annual ACM SIGACT Symposium on Theory
  of Computing}, pages 604--614, 2017.

\bibitem[GLR10]{GLR10}
Venkatesan Guruswami, James~R Lee, and Alexander Razborov.
\newblock {Almost Euclidean subspaces of $\ell_1^N$ via expander codes}.
\newblock {\em Combinatorica}, 30(1):47--68, 2010.

\bibitem[GMM22]{GMM22}
Venkatesan Guruswami, Peter Manohar, and Jonathan Mosheiff.
\newblock {$\ell_p$-Spread and Restricted Isometry Properties of Sparse Random
  Matrices}.
\newblock In {\em 37th Computational Complexity Conference (CCC 2022)}. Schloss
  Dagstuhl-Leibniz-Zentrum f{\"u}r Informatik, 2022.

\bibitem[Gol23]{Gol23}
Louis Golowich.
\newblock {New Explicit Constant-Degree Lossless Expanders}.
\newblock {\em arXiv preprint arXiv:2306.07551}, 2023.

\bibitem[GPT22]{GPT22}
Shouzhen Gu, Christopher~A Pattison, and Eugene Tang.
\newblock {An efficient decoder for a linear distance quantum LDPC code}.
\newblock {\em arXiv preprint arXiv:2206.06557}, 2022.

\bibitem[Gri01]{Gri01}
Dima Grigoriev.
\newblock {Linear lower bound on degrees of Positivstellensatz calculus proofs
  for the parity}.
\newblock {\em Theoretical Computer Science}, 259(1-2):613--622, 2001.

\bibitem[GUV09]{GUV09}
Venkatesan Guruswami, Christopher Umans, and Salil Vadhan.
\newblock {Unbalanced expanders and randomness extractors from Parvaresh--Vardy
  codes}.
\newblock {\em Journal of the ACM (JACM)}, 56(4):1--34, 2009.

\bibitem[Has89]{Hashimoto89}
Ki-ichiro Hashimoto.
\newblock {Zeta functions of finite graphs and representations of p-adic
  groups}.
\newblock In {\em Automorphic forms and geometry of arithmetic varieties},
  pages 211--280. Elsevier, 1989.

\bibitem[HKM23]{HKM23}
Jun-Ting Hsieh, Pravesh~K Kothari, and Sidhanth Mohanty.
\newblock A simple and sharper proof of the hypergraph {M}oore bound.
\newblock In {\em Proceedings of the 2023 Annual ACM-SIAM Symposium on Discrete
  Algorithms (SODA)}, pages 2324--2344. SIAM, 2023.

\bibitem[HL22]{HL22}
Max Hopkins and Ting-Chun Lin.
\newblock Explicit lower bounds against $\omega$ (n)-rounds of sum-of-squares.
\newblock In {\em 2022 IEEE 63rd Annual Symposium on Foundations of Computer
  Science (FOCS)}, pages 662--673. IEEE, 2022.

\bibitem[HLW06]{HLW18}
Shlomo Hoory, Nathan Linial, and Avi Wigderson.
\newblock Expander graphs and their applications.
\newblock {\em Bulletin of the American Mathematical Society}, 43(4):439--561,
  2006.

\bibitem[Hoe63]{Hoe63}
Wassily Hoeffding.
\newblock {Probability inequalities for sums of bounded random variables}.
\newblock {\em Journal of the American Statistical Association}, 58:13--30,
  1963.

\bibitem[Iha66]{Ihara66}
Yasutaka Ihara.
\newblock {On discrete subgroups of the two by two projective linear group over
  p-adic fields}.
\newblock {\em Journal of the Mathematical Society of Japan}, 18(3):219--235,
  1966.

\bibitem[Kah95]{Kah95}
Nabil Kahale.
\newblock Eigenvalues and expansion of regular graphs.
\newblock {\em Journal of the ACM (JACM)}, 42(5):1091--1106, 1995.

\bibitem[Kar11]{Kar11}
Zohar~S Karnin.
\newblock {Deterministic construction of a high dimensional $\ell_p$ section in
  $\ell_1^n$ for any $p < 2$}.
\newblock In {\em Proceedings of the forty-third annual ACM symposium on Theory
  of computing}, pages 645--654, 2011.

\bibitem[Kit03]{Kit03}
A~Yu Kitaev.
\newblock Fault-tolerant quantum computation by anyons.
\newblock {\em Annals of physics}, 303(1):2--30, 2003.

\bibitem[KK22]{KK22}
Amitay Kamber and Tali Kaufman.
\newblock {Combinatorics via closed orbits: number theoretic Ramanujan graphs
  are not unique neighbor expanders}.
\newblock In {\em Proceedings of the 54th Annual ACM SIGACT Symposium on Theory
  of Computing}, pages 426--435, 2022.

\bibitem[KRZS23]{kopparty2023simple}
Swastik Kopparty, Noga Ron-Zewi, and Shubhangi Saraf.
\newblock Simple constructions of unique neighbor expanders from
  error-correcting codes.
\newblock {\em arXiv preprint arXiv:2310.19149}, 2023.

\bibitem[LH22]{LH22b}
Ting-Chun Lin and Min-Hsiu Hsieh.
\newblock {Good quantum LDPC codes with linear time decoder from lossless
  expanders}.
\newblock {\em arXiv preprint arXiv:2203.03581}, 2022.

\bibitem[LPS88]{LPS88}
Alexander Lubotzky, Ralph Phillips, and Peter Sarnak.
\newblock Ramanujan graphs.
\newblock {\em Combinatorica}, 8:261--277, 1988.

\bibitem[LZ22]{LZ22}
Anthony Leverrier and Gilles Z{\'e}mor.
\newblock Quantum tanner codes.
\newblock In {\em 2022 IEEE 63rd Annual Symposium on Foundations of Computer
  Science (FOCS)}, pages 872--883. IEEE, 2022.

\bibitem[LZ23]{LZ23}
Anthony Leverrier and Gilles Z{\'e}mor.
\newblock {Efficient decoding up to a constant fraction of the code length for
  asymptotically good quantum codes}.
\newblock In {\em Proceedings of the 2023 Annual ACM-SIAM Symposium on Discrete
  Algorithms (SODA)}, pages 1216--1244. SIAM, 2023.

\bibitem[Mar88]{Mar88}
Grigorii~Aleksandrovich Margulis.
\newblock Explicit group-theoretical constructions of combinatorial schemes and
  their application to the design of expanders and concentrators.
\newblock {\em Problemy peredachi informatsii}, 24(1):51--60, 1988.

\bibitem[MM21]{MM21}
Theo McKenzie and Sidhanth Mohanty.
\newblock {High-Girth Near-Ramanujan Graphs with Lossy Vertex Expansion}.
\newblock In {\em 48th International Colloquium on Automata, Languages, and
  Programming (ICALP 2021)}. Schloss Dagstuhl-Leibniz-Zentrum f{\"u}r
  Informatik, 2021.

\bibitem[MOP20]{MOP20}
Sidhanth Mohanty, Ryan O'Donnell, and Pedro Paredes.
\newblock {Explicit near-Ramanujan graphs of every degree}.
\newblock In {\em Proceedings of the 52nd Annual ACM SIGACT Symposium on Theory
  of Computing}, pages 510--523, 2020.

\bibitem[Mor94]{Mor94}
Moshe Morgenstern.
\newblock Existence and explicit constructions of q+ 1 regular {R}amanujan
  graphs for every prime power q.
\newblock {\em Journal of Combinatorial Theory, Series B}, 62(1):44--62, 1994.

\bibitem[OW20]{OW20}
Ryan O'Donnell and Xinyu Wu.
\newblock {Explicit near-fully X-Ramanujan graphs}.
\newblock In {\em 2020 IEEE 61st Annual Symposium on Foundations of Computer
  Science (FOCS)}, pages 1045--1056. IEEE, 2020.

\bibitem[PK22]{PK22}
Pavel Panteleev and Gleb Kalachev.
\newblock {Asymptotically good quantum and locally testable classical LDPC
  codes}.
\newblock In {\em Proceedings of the 54th Annual ACM SIGACT Symposium on Theory
  of Computing}, pages 375--388, 2022.

\bibitem[RVW00]{RVW00}
Omer Reingold, Salil Vadhan, and Avi Wigderson.
\newblock Entropy waves, the zig-zag graph product, and new constant-degree
  expanders and extractors.
\newblock In {\em Proceedings 41st Annual Symposium on Foundations of Computer
  Science}, pages 3--13. IEEE, 2000.

\bibitem[Sch08]{Sch08}
Grant Schoenebeck.
\newblock Linear level lasserre lower bounds for certain k-csps.
\newblock In {\em 2008 49th Annual IEEE Symposium on Foundations of Computer
  Science}, pages 593--602. IEEE, 2008.

\bibitem[SS96]{SS96}
Michael Sipser and Daniel Spielman.
\newblock Expander codes.
\newblock {\em IEEE Trans. Inform. Theory}, 42(6, part 1):1710--1722, 1996.

\bibitem[Tan81]{Tan81}
R~Tanner.
\newblock {A recursive approach to low complexity codes}.
\newblock {\em IEEE Transactions on information theory}, 27(5):533--547, 1981.

\bibitem[TSUZ07]{TUZ07}
Amnon Ta-Shma, Christopher Umans, and David Zuckerman.
\newblock Lossless condensers, unbalanced expanders, and extractors.
\newblock {\em Combinatorica}, 27:213--240, 2007.

\end{thebibliography}

\newpage

\appendix
\section{Generalization of the Moore bound}
\label{app:exact-moore-bound}

In this section, we strengthen the classical Moore bound of Alon, Hoory and Linial~\cite{AHL02} and generalize the result to \emph{bicycles} (recall \Cref{def:excess}).
Our proof closely follows Section A of \cite{HKM23}, which is an alternative proof of the Moore bound.

\restatetheorem{thm:moore-bound}

The proof of \Cref{thm:moore-bound} is based on \emph{non-backtracking walks}, which are walks such that no edge is the inverse of its preceding edge.
For a graph $G$ on $n$ vertices with adjacency matrix $A$, we define $A^{(s)}$ to be the $n\times n$ matrix whose $(u, v)$ entry counts the number of length-$s$ non-backtracking walks between vertices $u$ and $v$ in $G$.
The following is a standard fact.
\begin{fact}[Recurrence and generating function of $A^{(s)}$]
\label{fact:non-backtracking}
    The non-backtracking matrices $A^{(s)}$ satisfy the following recurrence:
    \begin{equation*}
    \begin{aligned}
        A^{(0)} &= \Id \mcom \\
        A^{(1)} &= A \mcom \\
        A^{(2)} &= A^2 - D \mcom \\
        A^{(s)} &= A^{(s-1)} A - A^{(s-2)} (D-\Id) \mcom \quad s > 2 \mper
    \end{aligned}
    \end{equation*}
    The recurrences imply that these matrices have a generating function:
    \begin{equation*}
        J(t) \coloneqq \sum_{s=0}^{\infty} A^{(s)} t^{s} = (1-t^2) \cdot H(t)^{-1}
    \end{equation*}
    for $t \in [0, 1)$ whenever the series converges, where we recall that $H(t) = (D-\Id)t^2 - At + \Id$.
\end{fact}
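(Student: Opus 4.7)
The plan is to prove the recurrences by a direct combinatorial argument on walks, then extract the generating function identity algebraically. I would begin with the base cases: $A^{(0)} = \Id$ because length-$0$ walks do not move, $A^{(1)} = A$ because single edges cannot backtrack, and $A^{(2)} = A^2 - D$ because $(A^2)_{u,v}$ counts length-$2$ walks $u \to w \to v$ while the only backtracking length-$2$ walks are those returning to $u$ (so $v=u$), of which there are $\deg(u)$.

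The recurrence $A^{(s)} = A^{(s-1)} A - A^{(s-2)}(D-\Id)$ for $s \geq 3$ follows from interpreting $(A^{(s-1)} A)_{u,v}$ as the number of (non-backtracking length-$(s-1)$ walk $P = (u = x_0,\dots,x_{s-1} = w)$, edge $\{w,v\}$) pairs. The concatenated walk $(x_0,\dots,x_{s-1},v)$ is non-backtracking iff $v \neq x_{s-2}$, so the overcount equals the number of ``bad'' walks with $v = x_{s-2}$. Such a bad walk is determined by (a) the length-$(s-2)$ non-backtracking walk $(x_0,\dots,x_{s-2} = v)$ from $u$ to $v$, counted by $A^{(s-2)}_{u,v}$, together with (b) a choice of $w = x_{s-1}$ that is a neighbor of $v$ but distinct from $x_{s-3}$, giving $\deg(v) - 1$ options (with the convention when $s = 3$ that only $w \neq u$ is required, which equals $\deg(v) - 1$ since $x_{s-3} = x_0 = u$). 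Hence the bad count is $A^{(s-2)}_{u,v}(\deg(v)-1) = (A^{(s-2)}(D-\Id))_{u,v}$, yielding the recurrence.

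For the generating function, I would compute the formal product $J(t) H(t) = \sum_{s \geq 0} A^{(s)} t^s \cdot \bigl((D-\Id)t^2 - At + \Id\bigr)$ and collect terms by power of $t$. The coefficient of $t^0$ is $A^{(0)} = \Id$; the coefficient of $t^1$ is $A^{(1)} - A^{(0)} A = A - A = 0$; the coefficient of $t^2$ is $A^{(2)} - A^{(1)} A + A^{(0)}(D-\Id) = (A^2 - D) - A^2 + (D - \Id) = -\Id$; and for every $s \geq 3$ the coefficient is $A^{(s)} - A^{(s-1)}A + A^{(s-2)}(D-\Id) = 0$ by the recurrence. Thus $J(t) H(t) = (1 - t^2)\Id$, so $J(t) = (1-t^2) H(t)^{-1}$ on the domain where $H(t)$ is invertible and the series converges.

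The only subtle point will be getting the combinatorial count for the recurrence right in the edge case $s = 3$, where $x_{s-3} = x_0 = u$ is the starting vertex rather than a generic prior vertex; I would verify this case separately to confirm that the ``$-1$'' in $\deg(v) - 1$ correctly accounts for the forbidden backward step. Beyond that, the argument is essentially bookkeeping — right-multiplication by $A$ extends a walk at its tail, so no noncommutativity issues arise when matching the recurrence to the telescoping coefficients in $J(t) H(t)$.
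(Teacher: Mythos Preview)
Your argument is correct and is the standard proof of this classical fact. The paper itself does not prove it: \Cref{fact:non-backtracking} is stated as a known result and used without justification, so there is no ``paper's own proof'' to compare against. Your combinatorial derivation of the recurrence and the telescoping computation of $J(t)H(t)$ are both fine, including the $s=3$ edge case you flagged.
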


We first state the following simple lemma from \cite{HKM23},
\begin{lemma} \label{lem:upper-bound-l-walks}
    Let $s,k\in \N$, $s\geq k$, and let $q,r$ be the quotient and remainder of $s$ divided by $k$, i.e.\ $s = qk + r$.  Then,
    \begin{equation*}
        \tr(A^{(s)}) \leq \sqrt{n} \cdot \norm{A^{(k)}}_2^{q} \cdot \norm{A^{(r)}}_F.
    \end{equation*}
\end{lemma}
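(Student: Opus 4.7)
}

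The plan is to reduce the trace to a product of (symmetric) non-backtracking matrices via an entrywise inequality, and then apply standard matrix-norm inequalities. The key observation is the following submultiplicativity property: for any $k_1, k_2 \geq 0$ with $k_1 + k_2 = s$, we have the \emph{entrywise} inequality
\begin{equation*}
    A^{(s)} \leq A^{(k_1)} A^{(k_2)},
\end{equation*}
where ``$\leq$'' means coordinatewise. This is because each non-backtracking walk of length $s$ from $u$ to $v$ decomposes into its first $k_1$ steps (a non-backtracking walk $u \to w$) and its last $k_2$ steps (a non-backtracking walk $w \to v$), so $A^{(s)}[u,v] \leq \sum_w A^{(k_1)}[u,w]\,A^{(k_2)}[w,v]$. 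The inequality is not an equality in general because the concatenation of two non-backtracking walks can create a backtrack at the junction.

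Iterating the above entrywise inequality $q$ times and once more with the remainder yields $A^{(s)} \leq (A^{(k)})^q\,A^{(r)}$ entrywise. Since all entries are non-negative, the same holds for diagonal entries, hence
\begin{equation*}
    \tr(A^{(s)}) \leq \tr\bigl( (A^{(k)})^q \, A^{(r)} \bigr).
\end{equation*}

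Next, I will use the fact that $A^{(k)}$ is symmetric for every $k$ (reversing a non-backtracking walk yields another non-backtracking walk), so $(A^{(k)})^q$ is also symmetric. By the Cauchy--Schwarz inequality for the trace inner product,
\begin{equation*}
    \tr\bigl( (A^{(k)})^q \, A^{(r)} \bigr) \leq \bigl\| (A^{(k)})^q \bigr\|_F \cdot \bigl\| A^{(r)} \bigr\|_F.
\end{equation*}
Finally, since $(A^{(k)})^q$ is an $n \times n$ matrix, its Frobenius norm is bounded by $\sqrt{n}$ times its spectral norm, and by submultiplicativity of the spectral norm $\|(A^{(k)})^q\|_2 \leq \|A^{(k)}\|_2^q$. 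Combining these gives the desired bound. The main (and essentially only) subtlety is justifying the entrywise inequality and ensuring that the asymmetry between $(A^{(k)})^q$ and $A^{(r)}$ is handled correctly --- which it is, since both factors are symmetric and non-negative.
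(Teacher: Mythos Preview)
Your proof is correct. The paper does not give its own proof of this lemma; it simply cites \cite{HKM23}, so there is nothing to compare against here. Your argument via the entrywise submultiplicativity $A^{(s)} \le (A^{(k)})^q A^{(r)}$, followed by the trace Cauchy--Schwarz inequality and the bound $\|M\|_F \le \sqrt{n}\,\|M\|_2$ for $n\times n$ matrices, is exactly the kind of short argument one would expect for this ``simple lemma.''
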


With \Cref{fact:non-backtracking} and \Cref{lem:upper-bound-l-walks}, we now prove \Cref{thm:moore-bound} by analyzing the convergence of $J(t)$ as $t$ increases from $0$.

\begin{proof}[Proof of \Cref{thm:moore-bound}]
    Let $A$ be the adjacency matrix of $G$ with average degree $d>2$, let $D$ be the diagonal degree matrix $G$, and let $\rho = \lambda_1(B_G)$.
    We will analyze the convergence of $\tr(J(t)) = \sum_{s=0}^{\infty} \tr(A^{(s)}) t^s$ as $t$ increase from $0$ to $1/\rho$.
    In particular, by \Cref{lem:spectral-radius} we have that $H_G(t) \succ 0$ (thus $\tr(J(t)) < \infty$) for all $t \in [0, 1/\rho)$, and $\tr(J(1/\rho))$ diverges.

    Fix $k\in \N$. For each $s\in\N$ we can write $s = qk+r$, and
    \begin{equation*}
        J(t) = \sum_{s=0}^{\infty} A^{(s)} t^{s}
        = \sum_{r=0}^{k-1} \sum_{q=0}^{\infty} A^{(qk+r)} t^{qk+r} \mper
    \end{equation*}
    By \Cref{lem:upper-bound-l-walks}, we have
    \begin{equation*}
        \tr(J(t)) \leq \sum_{r=0}^{k-1} t^r \sqrt{n} \|A^{(r)}\|_F \sum_{q=0}^\infty \|A^{(k)}\|_2^q \cdot t^{qk}
        = \sum_{r=0}^{k-1} t^r \sqrt{n} \|A^{(r)}\|_F \sum_{q=0}^\infty \Paren{\|A^{(k)}\|_2 \cdot t^k}^q \mper
        \numberthis \label{eq:tr-J}
    \end{equation*}

    Now, let $k \coloneqq \floor{\log_{\rho} n} + 1$ and suppose for contradiction that $G$ contains no cycle of size $\leq \ell = 2k$.
    Observe that every entry of $A^{(k)}$ must be either 0 or 1, otherwise if $A^{(k)}[i,j] > 1$ then there are two distinct length-$k$ paths from $i$ to $j$, meaning there is a cycle of length at most $2k=\ell$, a contradiction.
    Therefore, the $L_1$ norm of each row of $A^{(k)}$ is at most $n$, hence $\|A^{(k)}\|_2 \leq n$.
    Then, setting $t = 1/\rho$, we have $\|A^{(k)}\|_2 \cdot (1/\rho)^k < 1$ since $k > \log_{\rho} n$, and \Cref{eq:tr-J} shows that $\tr(J(1/\rho)) < \infty$.
    This contradicts that $\tr(J(1/\rho))$ must diverge.

    Similarly, let $k' \coloneqq \floor{\log_\rho 2n}+1$ and suppose for contradiction that $G$ hs no bicycle of size $\leq \ell' = 3k'$.
    We claim that three distinct non-backtracking walks of a given length-$k'$ between any two vertices must form a bicycle, hence every entry of $A^{(k')}$ must be at most $2$.
    Suppose the union of the three distinct nonbacktracking walks between vertices $u$ and $v$, called $H_{uv}$, did not give rise to a bicycle, its excess must be at most $0$.
    Since $H_{uv}$ is connected, it must have at most one cycle.
    If there are no cycles, then there is exactly one nonbacktracking walk from $u$ to $v$, so we assume there is exactly one cycle.
    Any nonbacktracking walk in $H_{uv}$ can enter and exit the cycle at most once.
    Further, there is a unique way to start from $u$ and enter the cycle, and a unique way to exit the cycle and arrive at $v$.
    Between entering and exiting the cycle, there are only two choices: walking in the cycle clockwise or counterclockwise.
    There are at most two ways to walk between $u$ and $v$ in $k'$ steps --- either the shortest path between them is of length exactly $k'$ and does not touch the cycle, or a length-$k$ nonbacktracking walk must enter the cycle, which we established gives at most $2$ distinct walks.

    Thus, $\|A^{(k')}\|_2 \leq 2n$ and $\|A^{(k')}\|_2 \cdot (1/\rho)^{k'} < 1$ since $k' > \log_{\rho} 2n$.
    Again, \Cref{eq:tr-J} shows that $\tr(J(1/\rho)) < \infty$, a contradiction.
    This completes the proof.
\end{proof}
\section{Expansion profile of random graphs}    \label{sec:gadget}

In this section we prove \Cref{lem:gadget} (existence of biregular graphs with good expansion profile).
We first prove the desired statement for \erdos--\renyi graphs given in \Cref{lem:erdos-renyi-gadget}, and then transfer the result to random regular graphs via a coupling articulated in \Cref{lem:stoc-dom}.
See \Cref{sec:prelims} for the notations of various random bipartite graph models.

\begin{lemma}   \label{lem:erdos-renyi-gadget}
    Let $\bH \sim \ER_{n_1,n_2,p}$ where $n_1 \ge n_2$.
    Then with probability $1-O\left(\frac{1}{n_1} + \frac{1}{n_2}\right)$, for all $t$:
    \[
        P_{\bH}(t) \ge p \parens*{1-p}^{t-1} n_2 - \sqrt{4 p \parens*{1-p}^{t-1} n_1 \log n_1}. 
    \]
\end{lemma}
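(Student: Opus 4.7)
My plan is to treat this as a concentration-and-union-bound argument. The key observation is that in the Erdős--Rényi model the edges incident to different vertices are independent: so for any fixed $S$ on one side of $\bH$ with $|S| = s$, and any vertex $v$ on the opposite side, the event $\{v \in \UN_{\bH}(S)\}$ depends only on the $s$ potential edges between $S$ and $v$, and has probability $q_s := s p (1-p)^{s-1}$. These indicators are mutually independent across $v$, so $|\UN_{\bH}(S)| \sim \mathrm{Bin}(n', q_s)$, where $n' \in \{n_1, n_2\}$ is the size of the opposite side.

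Next I would apply the standard lower-tail Chernoff bound
\[
    \Pr\Bigl[|\UN_{\bH}(S)| < \E|\UN_{\bH}(S)| - \lambda\Bigr] \le \exp\!\bigl(-\lambda^2/(2\E|\UN_{\bH}(S)|)\bigr)
\]
with $\lambda = \sqrt{4 s q_s n_1 \log n_1}$. The choice is calibrated so that $\lambda^2/(2\E|\UN_{\bH}(S)|) \ge 2 s \log n_1$ (using $n' \le n_1$), giving per-set failure probability at most $n_1^{-2s}$. A union bound over the $\binom{n_1}{s} + \binom{n_2}{s} \le 2 n_1^s$ choices of $S$ of size $s$, followed by summing over $s \ge 1$, yields total failure probability $O(1/n_1)$; the symmetric argument with the roles of $n_1,n_2$ swapped contributes $O(1/n_2)$.

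On this high-probability event, every $S \subseteq V(\bH)$ satisfies $|\UN_{\bH}(S)|/|S| \ge p(1-p)^{|S|-1} n_2 - \sqrt{4 p(1-p)^{|S|-1} n_1 \log n_1}$, using $\E|\UN_{\bH}(S)| \ge s p(1-p)^{s-1} n_2$. To bootstrap into the stated bound for arbitrary $t \ge |S|$, I would note that the function $\phi(x) := x n_2 - \sqrt{4 x n_1 \log n_1}$ is non-positive on $x \le 4 n_1 \log n_1 / n_2^2$ and strictly increasing on $x \ge n_1 \log n_1 / n_2^2$. Hence either $\phi(p(1-p)^{t-1}) \le 0$ (in which case the statement is vacuous from $|\UN_{\bH}(S)| \ge 0$), or both $p(1-p)^{|S|-1} \ge p(1-p)^{t-1}$ lie in the increasing regime of $\phi$, giving $\phi(p(1-p)^{|S|-1}) \ge \phi(p(1-p)^{t-1})$ as required.

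The only delicate point is to verify that Chernoff's bound actually applies, i.e.\ that $\lambda \le \E|\UN_{\bH}(S)|$. A short calculation shows that this fails precisely when $p(1-p)^{s-1} < 4 n_1 \log n_1 / n_2^2$, which is exactly the regime in which the target bound is non-positive and the claim is trivial, so no further work is needed there.
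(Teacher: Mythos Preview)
Your proposal is correct and follows essentially the same Chernoff-plus-union-bound argument as the paper. Two minor differences worth noting: you are more careful than the paper in justifying the passage from the bound at $|S|=s$ to the bound at $t\ge s$ via the monotonicity analysis of $\phi$, which the paper leaves implicit; conversely, you jump from ``$S$ on one side'' to ``every $S\subseteq V(\bH)$'' without comment, whereas the paper inserts the one-line reduction $|\UN_{\bH}(S)|=|\UN_{\bH}(S_L)|+|\UN_{\bH}(S_R)|$ (together with your monotonicity argument and the mediant inequality this immediately handles general $S$).
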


\begin{lemma}[Embedding \erdos--\renyi graphs into random regular graphs]   \label{lem:stoc-dom}
    Fix $n_1,n_2, d_1, d_2 \in \N$ such that $m = n_1 d_1 = n_2 d_2$.
    Then for $p = \Paren{1-C \bigparen{\frac{d_1 d_2}{m} + \frac{\log m}{\min\{d_1,d_2\}}}^{1/3}} \frac{m}{n_1 n_2}$,
    there is a joint distribution of $\bH \sim \ER_{n_1,n_2,p}$ and $\bR \sim \Reg_{n_1,n_2,d_1,d_2}$ such that
    \begin{equation*}
        \Pr[\bH \subset \bR] = 1 - o(1).
    \end{equation*}
\end{lemma}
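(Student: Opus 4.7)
The plan is to adapt the sandwich-type coupling of \cite{FK16}, developed there between $\mathcal{G}_{n,p}$ and a random $d$-regular graph, to the bipartite biregular setting via the bipartite configuration model. First, I would build a candidate for $\bR$ by attaching $d_1$ half-edges to each left vertex and $d_2$ half-edges to each right vertex, sampling a uniformly random perfect matching $M$ between the two sets of half-edges, and letting $G_M$ be the resulting bipartite multigraph on $[n_1]\cup[n_2]$. Standard second-moment estimates on doubled-edge counts give $\Pr[G_M \text{ simple}]\ge \exp(-O(d_1 d_2))$, and conditioned on simplicity $G_M$ is uniformly distributed over $(d_1, d_2)$-biregular bipartite graphs, i.e., it has the same law as $\bR$.

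Next, I would couple $\bH$ to $G_M$ by a sequential thinned construction. Reveal the matching one left half-edge at a time in a uniformly random order, producing candidate edges $e_1, e_2, \ldots, e_m$. At each step, include $e_i$ into $\bH$ independently with probability $\alpha_i$, calibrated so that, marginalizing over both the random reveal order and over $M$, each potential pair $(u,v)\in [n_1]\times[n_2]$ is included with probability at most $p$. By construction $\bH\subseteq G_M$, so $\bH\subseteq\bR$ whenever $G_M$ is simple. A deferred-decisions argument combined with monotonicity of $\ER_{n_1,n_2,p}$ in $p$ then lets one pass from this weak distributional comparison to a genuine coupling between the thinned random graph and a fresh $\ER_{n_1,n_2,p}$ draw.

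The main obstacle is quantitatively bounding the discrepancy between the configuration model and independent Bernoulli edges. Two sources of error arise: (i) correlations between pairs sharing a vertex, of order $d_1 d_2/m$ per pair, which must be neutralized via a pairwise inclusion-exclusion correction to the $\alpha_i$; and (ii) atypical partial vertex loads during the sequential reveal, controlled by a Chernoff bound that fails with probability $\exp(-\Omega(\min(d_1,d_2)))$ per vertex, yielding the $\log m/\min(d_1,d_2)$ contribution after a union bound over the $\Theta(m)$ potential edges. Both errors, together with the simplicity-conditioning bias $\exp(-O(d_1 d_2))$, must be absorbed into the multiplicative slack $p=(1-C\varepsilon)\frac{m}{n_1 n_2}$ with $\varepsilon = (d_1 d_2/m + \log m/\min\{d_1,d_2\})^{1/3}$; the cube root reflects a three-way balancing of these errors and is where I expect the analysis to be most delicate.
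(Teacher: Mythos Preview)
Your plan has a genuine gap around the simplicity conditioning. You want to build $G_M$ via the configuration model, thin it to get $\bH$, and then identify $\bR$ with $G_M$ conditioned on $\{G_M\text{ simple}\}$. But in the regime of the lemma (and of its application in the paper), $d_1 d_2$ is unbounded: the hypothesis only asks that $d_1 d_2/m$ be small, not $d_1 d_2$ itself, and in the gadget application one has $d_1,d_2=\Theta(\sqrt{D})$, $m=\Theta(D^{3/2})$, hence $d_1 d_2=\Theta(D)\to\infty$. A standard computation gives $\Pr[G_M\text{ simple}]=\exp(-\Theta(d_1 d_2))$, so this event has probability $o(1)$, possibly exponentially small. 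Two things then break. First, ``$\bH\subseteq G_M$ always'' plus ``$G_M$ simple with probability $q$'' only yields $\Pr[\bH\subseteq \bR]\ge q$, which is useless when $q=o(1)$. Second, and more fundamentally, after conditioning on simplicity the law of your thinned $\bH$ is no longer close to $\ER_{n_1,n_2,p}$: the thinning is correlated with the edge multiplicities of $G_M$, and conditioning on an exponentially rare event can (and does) distort it. Calling $\exp(-O(d_1 d_2))$ a ``bias'' to be absorbed into the slack on $p$ conflates a vanishing event probability with a per-edge multiplicative error; the cube-root slack cannot swallow an exponentially small factor.

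The paper avoids this entirely by never conditioning the configuration model on simplicity. Instead it samples the edges of the \emph{uniform simple} biregular graph one at a time and proves, via switching arguments on the set of simple extensions, that for each $t\le(1-\eps)m$ the conditional law of the next edge is $(1-\eps)$-close to uniform over non-edges (their \Cref{lem:large-stopping-time}, using \Cref{lem:simple-probability-ratio} and \Cref{lem:random-extension-facts}). The multigraph model appears only inside the proof of that ratio lemma, as a device to compare $|\calS_{G\cup e}|$ to $|\calS_{G\cup e'}|$ through a coupling that alters at most three edges; simplicity is never an event one conditions on globally. Given the near-uniformity of each step, the coupling with $\ER_{n_1,n_2,m'}$ is then built by flipping a $\mathrm{Ber}(1-\eps)$ coin at each step to decide whether to follow the Erd\H{o}s--R\'enyi edge or a correction edge. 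If you want to rescue your plan, you would need to replace the configuration-model step with this kind of direct switching analysis of the simple model.
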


We first give a proof of \Cref{lem:gadget} assuming the \Cref{lem:erdos-renyi-gadget,lem:stoc-dom}.
\Cref{lem:erdos-renyi-gadget} is proved \hyperlink{proof:ER-gadget}{later} in this section, and \Cref{lem:stoc-dom} is proved in \Cref{sec:sandwich}.

\begin{proof}[{Proof of \Cref{lem:gadget}}]
    Recall that we would like to show that for $1 \geq \frac{d_1}{d_2} = \frac{D_2}{D_1} \geq \frac{\beta}{1-\beta}$ and $\theta\sqrt{D}/2 \leq d_1+d_2 \leq \theta\sqrt{D}$, there exists a $(d_1,d_2)$-biregular graph $R$ with $D_1$ and $D_2$ vertices on the left and right respectively such that $P_R(t)$ is large.

    By \Cref{lem:stoc-dom} there is a coupling between $\bR\sim\Reg_{D_1,D_2,d_1,d_2}$ and $\bH\sim\ER_{D_1,D_2,p}$ such that $\bH\subset \bR$ with probability $1-o_D(1)$ where $p = \parens*{1-\frac{C_{\beta}\log^{1/3}D }{D^{1/6}}} \frac{d_1}{D_2}$ where $C_{\beta}$ is a constant depending on $\beta$ and $\theta$.
    Note that $p \leq \frac{d_1}{D_2} = \frac{d_1+d_2}{D_1+D_2} \leq \frac{\theta}{\sqrt{D}}$.

    By concentration of the binomial random variable and the union bound, with probability $1-o_D(1)$ all vertices have degree $o(d_2) = o(\sqrt{D})$ in $\bR\setminus\bH$.
    Consequently: $P_{\bR}(t) \ge P_{\bH}(t) - o(\sqrt{D})$.
    Thus, it suffices to lower bound $P_{\bH}(t)$ to obtain a lower bound on $P_{\bR}(t)$.

    By \Cref{lem:erdos-renyi-gadget}, for $t \geq 1$,
    \begin{align*}
        P_{\bH}(t) &\ge p (1-p)^{t-1} D_2 - \sqrt{4p \parens*{1-p}^{t-1} D_1 \log D_1} \\
        &\ge \Paren{d_1 (1-p)^{t-1} - \sqrt{4d_2 \parens*{1-p}^{t-1} \log D } } \cdot  \parens*{1-o_D(1)}.
    \end{align*}
    For $t \le L \sqrt{D}$, $(1-p)^{t-1}$ is $\Omega(1)$ because $p \leq \frac{\theta}{\sqrt{D}}$, and since $d_2 \le \theta\sqrt{D}$ and $d_1 \ge \beta(d_1 + d_2) \ge \beta\cdot \theta\sqrt{D}/2$, we can conclude that the above is at least
    $\parens*{1-o_D(1)} \cdot d_1 \cdot (1-p)^{t-1}$.
    Finally, since $p \leq \frac{\theta}{\sqrt{D}}$, for $t \le L\sqrt{D}$,
    \[
        (1-p)^{t-1} \ge (1-o_D(1))\cdot\exp(-pt) 
        \ge (1-o_D(1)) \cdot \exp(-\theta t/\sqrt{D}),
    \]
    which completes the proof.
\end{proof}

We now prove \Cref{lem:erdos-renyi-gadget}: we show a lower bound on the expansion profile of $\ER_{n_1,n_2,p}$ using standard concentration inequalities and union bound.

\begin{proof}[{Proof of \Cref{lem:erdos-renyi-gadget}}] \hypertarget{proof:ER-gadget}{}
    Write $S\subseteq V(\bH)$, write $S\coloneqq S_L \cup S_R$ where $S_L \coloneqq S\cap L(\bH)$ and $S_R \coloneqq S \cap R(\bH)$.
    Observe that $\abs*{\UN_{\bH}(S)} = \abs*{\UN_{\bH}(S_L)} + \abs*{\UN_{\bH}(S_R)}$. Therefore, without loss of generality we can study $S$ completely in $L(\bH)$ or $R(\bH)$.

    For $S\subseteq R(\bH)$ with $|S| = t$, we have:
    \begin{align*}
        |\UN_{\bH}(S)| &= \sum_{v\in L(\bH)} \Ind[v\in\UN_{\bH}(S)].
    \end{align*}
    For each $v\in L(\bH)$, the number of edges between $v$ and $S$ is distributed as $\Bin(t, p)$,
    so each $\Ind\bracks*{v\in\UN_{\bH}(S)}$ is an independent Bernoulli with bias $q_t \coloneqq t p \parens*{1 - p}^{t-1}$. 
    By the Chernoff bound:
    \[
        \Pr\bracks*{ \abs*{\UN_{\bH}(S)} \le q_t n_1 - s\sqrt{q_tn_1}} \le \exp(-s^2 / 2),
    \]
    which in particular implies that $\abs*{\UN_{\bH}(S)} \ge q_tn_1 - \sqrt{4q_tn_1t\log n_2}$ except with probability at most $n_2^{-2t}$.
    By a union bound over all $S\subseteq R(\bH)$ of size $t$,
    \[
        \forall S\subseteq R(\bH)\text{ s.t. }|S|=t: \abs*{\UN_{\bH}(S)} \ge q_tn_1 - \sqrt{4q_tn_1t\log n_2}
    \]
    with probability at least $1-n_2^{-t}$.
    By an identical argument,
    \[
        \forall S\subseteq L(\bH)\text{ s.t. }|S|=t: \abs*{\UN_{\bH}(S)} \ge q_t n_2 - \sqrt{4q_t n_2t\log n_1}
    \]
    with probability at least $1 - n_1^{-t}$.
    In both cases, since $n_1\geq n_2,$ we have
    \[
        \frac{|\UN_{\bH}(S)|}{|S|} \geq p(1-p)^{t-1} n_2 - \sqrt{4p(1-p)^{t-1} n_1\log n_1}.
    \]
    Finally, taking a union bound over all $t\ge 1$ completes the proof.
\end{proof}

\section{Coupling \erdos--\renyi and random regular graphs}   \label{sec:sandwich}

We will closely follow \cite[Section 11.5]{FK16} (which is a special case of~\cite{DFRS17}), adapted to the case of bipartite graphs.
As before, we will write random variables in \textbf{boldface}, and see \Cref{sec:prelims} for a reminder of the notation for various random bipartite graph models.

\begin{theorem}[Embedding theorem] \label{thm:sandwich}
    Fix $n_1, n_2, d_1, d_2 \in \N$ such that $m = n_1 d_1 = n_2 d_2$.
    There is a universal constant $C$ such that if $\gamma\in (0,1)$ satisfies $\gamma  \geq C \Paren{\frac{d_1 d_2}{m} + \frac{\log m}{\min\{d_1,d_2\}}}^{1/3}$,
    then for $\wt{m} \leq \floor{(1-\gamma) m}$,
    there is a joint distribution of $\bG \sim \ER_{n_1,n_2,\wt{m}}$ and $\bR \sim \Reg_{n_1,n_2,d_1,d_2}$ such that
    \begin{equation*}
        \Pr[\bG \subset \bR] = 1 - o(1).
    \end{equation*}
    Furthermore, let $p = \frac{(1-2\gamma)m}{n_1 n_2}$.
    There is a joint distribution of $\bH \sim \ER_{n_1,n_2,p}$ and $\bR \sim \Reg_{n_1,n_2,d_1,d_2}$ such that
    \begin{equation*}
        \Pr[\bH \subset \bR] = 1 - o(1).
    \end{equation*}
\end{theorem}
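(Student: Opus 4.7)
The plan is to follow the sandwich/embedding technique from Frieze--Karo\'nski \cite[Section 11.5]{FK16} (which is a special case of \cite{DFRS17}), adapted to the bipartite biregular setting. The main idea is to generate $\bR \sim \Reg_{n_1,n_2,d_1,d_2}$ via the bipartite configuration model: assign $d_1$ half-edges to each left vertex and $d_2$ to each right vertex, and take a uniformly random perfect matching between the two sides. Conditioned on the resulting multigraph being simple --- an event of constant probability whenever $d_1 d_2 = o(m)$ --- the distribution is exactly uniform on $(d_1,d_2)$-biregular bipartite graphs. The key structural fact is that the configuration-model edges can be revealed sequentially in a uniformly random order, with each new edge being a uniformly random pair of unmatched half-edges; conditional on the history after $t$ edges, the probability of sampling $\{u,v\}$ is proportional to $(d_1 - \deg_t(u))(d_2 - \deg_t(v))$.

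I would then couple $\bG \sim \ER_{n_1,n_2,\wt{m}}$ with $\bR$ by revealing the edges of $\bR$ one at a time in this sequential manner, and at each step marking the freshly revealed edge $\{u,v\}$ as a $\bG$-edge with a carefully chosen acceptance probability $q_t(u,v)$, designed so that the marginal of $\bG$ is uniform over $\wt{m}$-edge bipartite graphs. Feasibility of such a coupling (i.e.\ $q_t(u,v) \leq 1$) requires the bias $(d_1-\deg_t(u))(d_2-\deg_t(v))/(d_1 d_2)$ to stay within $1 \pm O(\gamma)$ throughout the first $\wt{m}$ revelations; in particular, every partial degree must remain at most $(1 - \Omega(\gamma))$ of its final value. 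I would establish this by a Chernoff-type concentration on the degree of a fixed vertex in the configuration model (the $t$-th incremental degree contribution is a low-variance indicator) combined with a union bound over vertices. The $(\log m/\min(d_1,d_2))^{1/3}$ contribution to $\gamma$ is consumed here, while the $(d_1 d_2 / m)^{1/3}$ contribution absorbs the residual bias arising from the non-uniformity of the configuration-model sampling. The standard bound on the probability of the configuration-model multigraph being simple handles the final conditioning.

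To deduce the $\ER_{n_1,n_2,p}$ version from the $\ER_{n_1,n_2,\wt{m}}$ version, I would invoke the standard reduction: if $\bH \sim \ER_{n_1,n_2,p}$ with $p = (1-2\gamma) m /(n_1 n_2)$, then $|E(\bH)|$ is binomial with mean $(1-2\gamma)m$, and by a Chernoff bound $|E(\bH)| \leq \floor{(1-\gamma)m} = \wt{m}$ except with probability $o(1)$ (the hypothesis on $\gamma$ is more than sufficient for this tail bound). Conditioning on $|E(\bH)| = k \leq \wt{m}$ makes $\bH$ distributed as $\ER_{n_1,n_2,k}$, which embeds monotonically into $\ER_{n_1,n_2,\wt{m}}$, which in turn embeds into $\bR$ by the first part. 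The main technical obstacle will be interlocking the degree-profile concentration and the rejection coupling cleanly enough that both error contributions combine into a single $\gamma$; this is precisely what forces the exponent $1/3$ and the additive form $\frac{d_1 d_2}{m} + \frac{\log m}{\min(d_1,d_2)}$ in the hypothesis.
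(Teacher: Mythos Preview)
Your overall architecture---sequentially reveal edges of the regular graph and couple each step to a uniform edge---matches the paper's, and your treatment of the second statement (Chernoff on $|E(\bH)|$, then monotone embedding $\ER_{n_1,n_2,k}\hookrightarrow\ER_{n_1,n_2,\wt m}$) is exactly what the paper does. The gap is in the first statement, specifically in the line ``the standard bound on the probability of the configuration-model multigraph being simple handles the final conditioning.''

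It does not. If you run your acceptance coupling against the \emph{configuration model} process $\bM$, you obtain a pair $(\bG,\bM)$ with the correct marginals and $\Pr[\bG\subset\bM]=1-o(1)$, but $\{\bM\text{ simple}\}$ has probability only $\Theta(1)$ and is correlated with $\bG$ through the shared degree history; hence $\bG\mid\{\bM\text{ simple}\}$ need not be $\ER_{n_1,n_2,\wt m}$, and rejection-sampling on simplicity does not yield a valid coupling with success $1-o(1)$. Equivalently, if you instead work directly with $\bR\sim\Reg$, the step-$t$ conditional law satisfies
\[
\Pr[\boldf_{t+1}=e\mid\bR_t]\ \propto\ (d_1-\deg_{\bR_t}(u))\,(d_2-\deg_{\bR_t}(v))\cdot\Pr\bigl[\bM_{\bR_t\cup e}\text{ is simple}\bigr],
\]
and the last factor is \emph{not} constant in $e$; controlling its oscillation is the actual content of the $(d_1d_2/m)^{1/3}$ term, not merely ``residual bias of the configuration model.'' The paper handles this head-on: it defines a stopping time $\bT_\eps$ as the last step at which $\Pr[\boldf_{t+1}=e\mid\bR_t]\ge(1-\eps)/(n_1n_2-t)$ for all $e$, and proves $\bT_\eps\ge(1-\eps)m$ w.h.p.\ by combining (i) your degree concentration, which pins the product $(d_1-\deg_{\bR_t}(u))(d_2-\deg_{\bR_t}(v))$ to within $1\pm O(\eps)$, and (ii) a switching argument showing $\Pr[\bM_{G\cup e'}\text{ simple}]/\Pr[\bM_{G\cup e}\text{ simple}]\ge 1-\eps/2$ for any two non-edges $e,e'$. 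Piece (ii) is what your outline is missing. Once $\bT_\eps$ is known to be large, the paper's coupling runs in the reverse direction from yours (generate $\be_t\sim\ER$ first, then set $\boldf'_t=\be_t$ with probability $1-\eps$ and otherwise draw from the residual), which sidesteps the conditioning issue entirely because $\bR'$ has the correct $\Reg$-marginal by construction.
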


To prove \Cref{thm:sandwich}, we need to introduce some more notation.
With slight abuse of notation, we write
\begin{equation*}
    \bG = (\be_1,\dots, \be_m), \quad \bR = (\boldf_1,\dots, \boldf_m)
\end{equation*}
to be random orderings of the edges.
Moreover, for $t = 1,2,\dots, m$, we define random variables $\bG_t = (\be_1,\dots,\be_t)$ and $\bR_t = (\boldf_1,\dots,\boldf_t)$.

Note that for any bipartite graph $G$ of size $t$ and any edge $e \in K_{n_1,n_2} \setminus G$, the conditional probability
\begin{equation*}
    \Pr[\be_{t+1} = e | \bG_t = G] = \frac{1}{n_1 n_2 - t}.
\end{equation*}

This motivates the following definition.

\begin{definition} \label{def:stopping-time}
    Fix $\eps \in (0,1)$.
    We define $\bA_{\eps,t}$ to be the event that for all $e \in K_{n_1,n_2} \setminus \bR_t$,
    \begin{equation*}
        \Pr[\boldf_{t+1} = e | \bR_t] \geq \frac{1-\eps}{n_1 n_2 - t}.
        \numberthis \label{eq:close-to-uniform}
    \end{equation*}
    Further, we define the stopping time
    \begin{equation*}
        \bT_{\eps} = \max \{u: \text{$\bA_{\eps,t}$ occurs for all $t \leq u$} \}.
    \end{equation*}
\end{definition}

Intuitively, suppose we sample the edges of $\bR \sim \Reg_{n_1,n_2,d_1,d_2}$ one by one.
At each time step $t \leq \bT_{\eps}$, the conditional distribution of the next edge is close to uniform, which is the case for $\ER_{n_1,n_2,m}$.
Thus, the main ingredient in the proof of \Cref{thm:sandwich} is to show that $\bT_{\eps}$ is large with high probability, i.e., for most steps, the sampling process behaves roughly like $\ER_{n_1,n_2,m}$.
The following lemma is analogous to Lemma 11.18 of \cite{FK16}.

\begin{lemma}[Large stopping time] \label{lem:large-stopping-time}
    There is a universal constant $C$ such that if $\eps\in (0,1)$ satisfies $\eps \geq C \Paren{\frac{d_1 d_2}{m} + \frac{\log m}{\min\{d_1,d_2\}}}^{1/3}$, then $\bT_{\eps} \geq (1-\eps) m$ with probability $1-o(1)$.
\end{lemma}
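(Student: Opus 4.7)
The plan is to adapt the switching-based strategy of \cite[Section~11.5]{FK16} to the bipartite biregular setting. Writing $N(H)$ for the number of $(d_1,d_2)$-biregular graphs containing $H$ as a subgraph, a short calculation from the random-ordering definition of $\bR_t$ gives that for any valid partial biregular $G$ of size $t$ and any $e \notin G$,
\[
    \Pr[\boldf_{t+1} = e \mid \bR_t = G] = \frac{N(G \cup \{e\})}{N(G) \cdot (m - t)} \mper
\]
Thus the event $\bA_{\eps, t}$ is equivalent to the statement $N(G \cup \{e\})/N(G) \geq (1-\eps)(m-t)/(n_1 n_2 - t)$ for every $e \notin G$.

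The first ingredient is degree concentration. Under a uniform random ordering of the edges of $\bR$, the degree of a fixed $u \in [n_1]$ in $\bR_t$ is hypergeometrically distributed with mean $t d_1 / m$, and likewise on the right. A Chernoff-type bound, combined with a union bound over vertices and over all $t \leq (1-\eps) m$, shows that with probability $1 - o(1)$, $\deg_{\bR_t}(u) \leq (1 - \eps/2)\, d_1$ for every $u$ and symmetrically on the right, simultaneously for every such $t$. This step needs $\eps^2 \min(d_1, d_2) \gtrsim \log m$, which is implied by the hypothesis on $\eps$.

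The second ingredient is a switching argument to estimate $N(G \cup \{e\}) / N(G)$ for each degree-concentrated $G$. Fix $e = (u, v) \notin G$, and set $a \coloneqq d_1 - \deg_G(u)$, $b \coloneqq d_2 - \deg_G(v)$. Pair each $\bR \supseteq G$ with $e \notin \bR$ to an $\bR' \supseteq G \cup \{e\}$ via an edge swap: pick $(u, v') \in \bR \setminus G$ with $v' \neq v$ and $(u', v) \in \bR \setminus G$ with $u' \neq u$, require $(u', v') \notin \bR$, and set $\bR' = \bR \cup \{(u, v), (u', v')\} \setminus \{(u, v'), (u', v)\}$. The number of forward swaps from a given $\bR$ is $ab$ minus an $O(ab \cdot d_1 d_2 / m)$ correction for blocked pairs, while the number of reverse swaps from a given $\bR'$ is $m - t - 1$ minus an $O(d_1 d_2)$ correction. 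Combining these counts,
\[
    \frac{N(G \cup \{e\})}{N(G)} = \frac{ab}{ab + (m - t)} \cdot \bigl(1 \pm O(d_1 d_2 / (\eps m))\bigr) \mper
\]
Degree concentration gives $ab \geq (1 - O(\sqrt{\log m/\min(d_1, d_2)} / \eps))^2 \cdot (m-t)^2/(n_1 n_2)$, so the hypothesis $\eps \gtrsim (d_1 d_2/m + \log m/\min(d_1, d_2))^{1/3}$ absorbs both sources of error and yields the required inequality.

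The main technical obstacle is the careful bookkeeping of blocked swap pairs in both directions: in the forward direction, bounding pairs $(u', v')$ where $(u', v')$ is already an edge of $\bR$; in the reverse direction, bounding choices of $(u', v') \in \bR' \setminus (G \cup \{e\})$ where $(u, v')$ or $(u', v)$ is already an edge of $\bR'$. These corrections must be controlled uniformly across all concentrated partial graphs, and obtaining a relative error of order $d_1 d_2/m$ (rather than a weaker bound) is what pins down the cube-root exponent in the hypothesis. Once the ratio estimate is in place, a final union bound over $t \leq (1-\eps) m$ and the two failure modes (a saturated vertex or a bad completion ratio) yields $\bT_\eps \geq (1-\eps) m$ with probability $1 - o(1)$.
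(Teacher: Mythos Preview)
Your high-level strategy (degree concentration plus a switching estimate for completion counts, following \cite[Section~11.5]{FK16}) matches the paper's, and your formula $\Pr[\boldf_{t+1}=e\mid \bR_t=G]=N(G\cup\{e\})/(N(G)(m-t))$ and the degree-concentration step are both correct.

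The gap is in the switching. You assert that the number of forward swaps from a \emph{given} $\bR\supseteq G$ with $e\notin\bR$ is $ab - O(ab\cdot d_1d_2/m)$. But the forward correction is the number of edges of $\bR$ between $N_{\bR\setminus G}(u)$ and $N_{\bR\setminus G}(v)$, and for a worst-case $\bR$ this can be as large as $\min(a d_2, b d_1)$, which is comparable to $ab$ itself rather than to $ab\cdot d_1d_2/m$. Your bound is the \emph{expected} order of magnitude over a random extension, not a pointwise one, so the usual min-degree/max-degree double-counting does not directly yield the ratio $N(G\cup\{e\})/N(G)$ you want. Making it rigorous via the exact identity (avg fwd)$\cdot N_0 = $ (avg rev)$\cdot N_1$ would require computing three-edge probabilities in a random extension of $G$, which is not easier than the original problem.

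The paper avoids this by a different reduction. Rather than comparing $N(G\cup\{e\})$ to $N(G)$, it compares $|\calS_{G\cup e}|$ to $|\calS_{G\cup e'}|$ for two non-edges $e,e'$ (showing the ratio lies in $[1-\eps,1/(1-\eps)]$ suffices, since the average of $\Pr[\boldf_{t+1}=e\mid\bR_t]$ over $e$ is exactly $1/(n_1n_2-t)$). It routes this through the configuration model: by the exact counting formula (\Cref{fact:conditioned-on-simple}), $|\calS_{G\cup e'}|/|\calS_{G\cup e}|$ factors into a degree ratio (handled by concentration) times $\Pr[\bM_{G\cup e'}\text{ simple}]/\Pr[\bM_{G\cup e}\text{ simple}]$. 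That last ratio is controlled by an explicit coupling of the two multigraph extensions that differs in at most three edges, and the only quantities needed are codegree bounds $\deg_{H|G}(u',u)$ for a \emph{random} extension $H$, proved by a much simpler switching (\Cref{lem:random-extension-facts}). This is precisely how the worst-case obstacle you flagged gets replaced by a high-probability one.
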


We will defer the proof to \Cref{sec:proof-of-large-stopping-time}.
This lemma suffices to prove \Cref{thm:sandwich}.

\begin{proof}[Proof of \Cref{thm:sandwich} by \Cref{lem:large-stopping-time}]
    Recall that $m = n_1 d_1 = n_2 d_2$.
    We will define a graph process $\bR' = (\boldf_1',\dots, \boldf_m')$ coupled with $\bG = (\be_1,\dots,\be_m)\sim \ER_{n_1,n_2,m}$ and show that (1) $\boldf'_t$ and $\boldf_t$ have the same conditional distribution, and (2) $\bR'\cap\bG$ is large and contains a random subgraph $\bG' \subset \bG$ of size $\wt{m} < m$ with high probability.
    Since a subgraph $\bG'$ is also distributed as $\ER_{n_1,n_2,\wt{m}}$, this gives a coupling between $\bG'\sim \ER_{n_1,n_2,\wt{m}}$ and $\bR' \sim \Reg_{n_1,n_2,d_1, d_2}$ such that $\bG'\subset \bR'$ with high probability.

    Recall that $\Pr[\be_{t+1} = e | \bG_t] = \frac{1}{n_1n_2 - t}$ for all $t$ and $e \in K_{n_1,n_2} \setminus \bG_t$, and we write
    \begin{equation*}
        p_{t+1}(e | \bR_t) \coloneqq \Pr\Brac{\boldf_{t+1} = e | \bR_t}
    \end{equation*}
    which is at least $\frac{1-\eps}{n_1n_2 - t}$ for $t \leq \bT_{\eps}$ by \Cref{def:stopping-time}.

    The graph process $\bR'$ is sampled as follows:
    at time step $t \leq \bT_{\eps}$,
    \begin{enumerate}
        \item Sample a Bernoulli random variable $\bxi_{t+1} \in \{0,1\}$ with bias $1-\eps$.
        \item Sample a random edge $\bg_{t+1} \in K_{n_1,n_2} \setminus \bR_t'$ according to the conditional distribution
        \begin{equation*}
            \Pr\Brac{\bg_{t+1} = e | \bR_t', \bG_t} \coloneqq \frac{1}{\eps} \Paren{ p_{t+1}(e| \bR_t') - \frac{1-\eps}{n_1 n_2 - t}} \geq 0.
        \end{equation*}
        Note that this is a valid probability distribution over $K_{n_1,n_2}\setminus \bR_t'$ since the above is non-negative due to \Cref{eq:close-to-uniform} and the sum is 1 because $\Abs{K_{n_1,n_2} \setminus \bR_t'} = n_1n_2-t$.

        \item Fix any bijection map $\bh: \bR_t' \setminus \bG_t \to \bG_t \setminus \bR_t'$.
        Set
        \begin{equation*}
            \boldf_{t+1}' =
            \begin{cases}
                \be_{t+1}, & \text{if } \bxi_{t+1} = 1,\ \be_{t+1} \notin \bR_t', \\
                \bh(\be_{t+1}), & \text{if } \bxi_{t+1} = 1,\ \be_{t+1} \in \bR_t', \\
                \bg_{t+1}, & \text{if } \bxi_{t+1} = 0.
            \end{cases}
        \end{equation*}
        Note that if $\bxi_{t+1} = 1$, then $\boldf_{t+1}' \in \bG_{t+1}$.
    \end{enumerate}
    For $t > \bT_{\eps}$, we sample $\boldf_{t+1}'$ according to the probabilities $p_{t+1}(e|\bR_t')$ without coupling, and we keep sampling $\bxi_{t+1}$ for notational convenience.

    We first show that the conditional distribution of $\boldf_{t+1}'$ is the same as $\boldf_{t+1}$.
    For $e \in K_{n_1,n_2} \setminus \bR_t'$,
    \begin{align*}
        \Pr\Brac{\boldf_{t+1}' = e | \bR_t', \bG_t}
        & = (1-\eps) \cdot \Pr\Brac{\boldf_{t+1}' = e | \bR_t', \bG_t, \bxi_{t+1} = 1} + \eps \cdot  \Pr\Brac{\bg_{t+1} = e | \bR_t', \bG_t} \\
        &= \frac{1-\eps}{n_1 n_2 - t} + \Paren{p_{t+1}(e|\bR_t') - \frac{1-\eps}{n_1n_2 - t}} \\
        &= p_{t+1}(e|\bR_t').
    \end{align*}
    This is because if $\bxi_{t+1}=1$ then $\boldf_{t+1}'$ is an edge in $\bG_{t+1}$, and if $\bxi_{t+1} = 0$ then $\boldf_{t+1}' = \bg_{t+1}$.
    This shows that $\boldf_{t+1}'$ and $\boldf_{t+1}$ have the same conditional distribution.

    Next, we claim that in the end, $\bR'$ and $\bG$ share many edges.
    Let
    \begin{equation*}
        \bS \coloneqq \Set{\boldf_{t}' : \bxi_t = 1, 0\leq t \leq (1-\eps)m} \subset \bR'.
    \end{equation*}
    By \Cref{lem:large-stopping-time}, $\bT_{\eps} \geq (1-\eps)m$ with probability $1-o(1)$.
    Conditioned on this, we know that all edges in $\bS$ lie in $\bG$.
    Moreover, $|\bS|$ is distributed as $\Bin((1-\eps)m, 1-\eps)$, so $\E[|\bS|] = (1-\eps)^2 m$, and by Chernoff bound,
    \begin{equation*}
        \Pr\Brac{|\bS| \leq (1-\eps)^3 m} \leq \exp\Paren{-\frac{1}{2}\eps^2(1-\eps)^2 m} = o(1).
    \end{equation*}
    Let $\gamma = 3\eps$ and fix $\wt{m} \leq \floor{(1-\gamma) m} \leq (1-\eps)^3 m$.
    We now take the first $\wt{m}$ edges from $\bS \subset \bR'$, and the resulting graph $\bG'$ is distributed as $\ER_{n_1,n_2, \wt{m}}$.
    Thus, we have obtained a joint distribution between $\bG' \sim \ER_{n_1,n_2,\wt{m}}$ and $\bR' \sim \Reg_{n_1,n_2,d_1,d_2}$ such that $\bG' \subset \bR'$ with probability $1-o(1)$.

    The second statement of the theorem is a simple modification.
    We sample $\ER_{n_1,n_2,p}$ as follows: (1) sample $\boldm' \sim \Bin(n_1 n_2, p)$, and (2) sample $\bH \sim \ER_{n_1,n_2, \boldm'}$ coupled with $\bR'\sim \Reg_{n_1,n_2,d_1,d_2}$ as described before (if $\boldm' > m$ then sample the extra edges randomly).
    By the Chernoff bound,
    \begin{equation*}
        \Pr[\boldm' \geq (1+\gamma) n_1 n_2 p] \leq \exp\Paren{-\gamma^2 n_1 n_2 p/3} = o(1).
    \end{equation*}
    Since $p = \frac{(1-2\gamma)m}{n_1 n_2}$, $\floor{(1+\gamma) n_1 n_2 p} \leq \floor{(1-\gamma)m}$, and conditioned on $\boldm' \leq \floor{(1-\gamma)m}$, the exact same analysis goes through.
    Thus, we get $\Pr[\bH \subset \bR'] = 1 - o(1)$, completing the proof.
\end{proof}

\subsection{Random graph extension}

To prove \Cref{lem:large-stopping-time}, we first need a few definitions and lemmas about extensions of graphs.
As before, fix $n_1, n_2, d_1, d_2 \in \N$ such that $m \coloneqq n_1 d_1 = n_2 d_2$.
We first introduce the following definitions.

\begin{definition}[Graph extension] \label{def:graph-extension}
    Given an ordered bipartite graph $G = (e_1,\dots,e_t)$, we say that an ordered simple $(d_1,d_2)$-biregular graph $H = (f_1,\dots,f_m)$ with $m$ edges is an \emph{extension} of $G$ if $e_i = f_i$ for $i\leq t$.
    We write $\calS_G \coloneqq \calS_G(n_1,n_2,d_1,d_2)$ to denote the set of extensions of $G$, and write $\bS_G$ as a random graph sampled uniformly from $\calS_G$ (we will drop the dependence on $n_1,n_2,d_1,d_2$ when clear from context).

    Given $G$ and an extension $H$, for vertices $u, v \in L$ or $u,v\in R$,
    \begin{equation*}
        \deg_{H|G}(u,v) = \Abs{ \Set{w: (u,w)\in H\setminus G \text{ and } (v,w) \in H }}.
    \end{equation*}
    Note that $\deg_{H|G}(u,v)$ is not symmetric in $u$ and $v$.
\end{definition}

Although \Cref{def:graph-extension} is a natural definition, it is difficult to analyze since we require the extension of $G$ to be \emph{simple}.
On the other hand, if we allow \emph{multigraphs} (parallel edges allowed), then there is a very simple process to sample a multigraph extension from $G$, namely the ``configuration model''.
Furthermore, it is easy to see that conditioned on the sampled multigraph being simple, the process gives the uniform distribution over $\calS_G$.

\begin{definition}[Random multigraph extension] \label{def:multigraph-extension}
    Given a graph $G = (e_1,\dots,e_t)$ of size $t$, we denote $\bM_G$ to be an ordered random \emph{multigraph} extension of $G$ sampled as follows:
    \begin{enumerate}
        \item Set $U$ to be a random permutation of $(1,\dots,1, \dots, n_1,\dots,n_1)$ where each $u\in [n_1]$ has multiplicity $d_1 - \deg_G(u)$.
        $U$ has length $n_1 d_1 - |G| = m-t$.

        \item Set $V$ to be a random permutation of $(1,\dots,1, \dots, n_2,\dots,n_2)$ where each $v\in [n_2]$ has multiplicity $d_2 - \deg_G(v)$.
        $V$ also has length $m-t$.

        \item Set the $i$-th edge of $\bM_G$ to be $e_i$ for $i\leq t$, and set the $(t+j)$-th edge of $\bM_G$ to be $(U[j], V[j])$ for each $1 \leq j \leq m-t$.
    \end{enumerate}
\end{definition}

\begin{fact} \label{fact:conditioned-on-simple}
    There are $\frac{(m-t)!}{\prod_{u\in [n_1]} (d_1 - \deg_G(u))!}$ distinct permutations of $U$ and $\frac{(m-t)!}{\prod_{v\in [n_2]} (d_2 - \deg_G(v))!}$ distinct permutations of $V$.
    Suppose $H$ is a simple (ordered) extension of $G$, then
    \begin{equation*}
        \Pr\Brac{\bM_G = H} = \frac{\prod_{u\in [n_1]} (d_1 - \deg_G(u))! \cdot \prod_{v\in [n_2]} (d_2 - \deg_G(v))!}{((m-t)!)^2}.
    \end{equation*}
    In particular, conditioned on $\bM_G$ being simple, it has the same distribution as $\bS_G$.
\end{fact}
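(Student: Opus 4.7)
\medskip

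\noindent\textbf{Proof plan for \Cref{fact:conditioned-on-simple}.}
The first claim is a pure multiset-permutation count. The sequence $U$ has length $\sum_{u\in[n_1]} (d_1 - \deg_G(u)) = n_1 d_1 - t = m-t$, and the number of distinct orderings of a multiset with multiplicities $\{d_1 - \deg_G(u)\}_{u\in[n_1]}$ is the multinomial coefficient $(m-t)!/\prod_u (d_1 - \deg_G(u))!$, by the standard argument of dividing the $(m-t)!$ labeled permutations by the permutations of identical copies. The identical argument applies to $V$. Since $U$ and $V$ are sampled independently and uniformly among the distinct permutations of their respective multisets, each sampled pair of distinct sequences occurs with probability equal to the reciprocal of the product of the two counts.

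Next, I will argue the probability formula for a fixed simple extension $H = (e_1,\ldots,e_t,e_{t+1},\ldots,e_m)$. Writing $e_{t+j} = (u_j,v_j)$ for $j=1,\ldots,m-t$, I observe that $H$ being a $(d_1,d_2)$-biregular extension of $G$ forces the sequence $(u_1,\ldots,u_{m-t})$ to be a valid permutation of the required $U$-multiset (each $u\in[n_1]$ appears $d_1 - \deg_G(u)$ times), and similarly for $(v_1,\ldots,v_{m-t})$. Moreover, the map from the pair of sampled sequences $(U,V)$ to the resulting multigraph is a bijection onto its image when restricted to simple outputs: if $\bM_G = H$ with $H$ simple, then necessarily $U[j] = u_j$ and $V[j] = v_j$ for every $j$, because no two edges of $H$ past position $t$ coincide. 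Hence exactly one pair of distinct-permutation outcomes yields $H$, and multiplying the two reciprocal counts gives the claimed probability $\prod_u (d_1 - \deg_G(u))!\cdot\prod_v (d_2 - \deg_G(v))!/((m-t)!)^2$.

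Finally, to deduce the conditional-distribution statement, I note that the expression just derived does \emph{not} depend on the choice of simple extension $H \in \calS_G$. Therefore $\Pr[\bM_G = H \mid \bM_G \text{ is simple}]$ is constant over $H \in \calS_G$, i.e., uniform on $\calS_G$, which by definition is the law of $\bS_G$. No step is really an obstacle; the only subtlety to be careful about is the bijectivity claim in the previous paragraph, namely that when $\bM_G$ happens to be simple, the sequences $U$ and $V$ are uniquely determined by the resulting ordered multigraph (so there is no overcounting from multiple sequence pairs producing the same $H$), which follows immediately from the fact that simple ordered graphs have no repeated edges.
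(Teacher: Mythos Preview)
Your proposal is correct, and in fact the paper states this as a \emph{Fact} with no proof at all, so your argument is exactly the standard verification one would supply. One small remark: the bijectivity you flag as the ``only subtlety'' does not actually require $H$ to be simple. Because $H$ is an \emph{ordered} sequence of bipartite edges, the $j$-th edge $e_{t+j}=(u_j,v_j)$ already determines $U[j]=u_j$ and $V[j]=v_j$ uniquely, regardless of whether some other $e_{t+j'}$ happens to equal $e_{t+j}$; so the map $(U,V)\mapsto \bM_G$ is injective on all outcomes, not only the simple ones. This does not affect the correctness of your argument, it just means your caveat about ``no repeated edges'' is unnecessary.
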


The main ingredient of the proof of \Cref{lem:large-stopping-time} is the following lemma, which states that the probability of a random multigraph extension of $G\cup e$ being simple is roughly the same for all $e\notin G$, assuming that $G$ is not too ``saturating''.

\begin{lemma} \label{lem:simple-probability-ratio}
    Let $\eps \geq C \Paren{\frac{d_1 d_2}{m}}^{1/3} + C\sqrt{\frac{\log m}{\min\{d_1, d_2\}}}$ for a large enough constant $C$.
    Let $G$ be a bipartite graph with $t \leq (1-\eps)m$ edges such that $\calS_G = \calS_G(n_1,n_2,d_1,d_2)$ is non-empty.
    If $\deg_G(u) \leq (1-\eps/2)d_1$ and $\deg_G(v) \leq (1-\eps/2) d_2$ for all $u\in [n_1]$, $v\in [n_2]$, then for every $e,e' \notin G$, we have
    \begin{equation*}
        \frac{\Pr[\bM_{G\cup e'} \in \calS_{G\cup e'}]}{\Pr[\bM_{G\cup e} \in \calS_{G\cup e}]}
        \geq 1- \frac{\eps}{2}.
    \end{equation*}
\end{lemma}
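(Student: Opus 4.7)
The plan is to prove \Cref{lem:simple-probability-ratio} via a $3$-edge switching argument on unordered biregular bipartite graphs, after reducing to a purely combinatorial count. First, applying \Cref{fact:conditioned-on-simple} to both $\bM_{G_e}$ and $\bM_{G_{e'}}$, the ratio of simplicity probabilities factors as $|\calS_{G_{e'}}|/|\calS_{G_e}|$ times a product of residual-degree factorials. Since the degree sequences of $G_e$ and $G_{e'}$ differ only at $\{u,v,u',v'\}$ by $\pm 1$ (where $e=(u,v)$ and $e'=(u',v')$), this product collapses cleanly to
\[
\frac{(d_1 - \deg_G(u))(d_2 - \deg_G(v))}{(d_1 - \deg_G(u'))(d_2 - \deg_G(v'))}.
\]
Writing $|\calS_G| = N_G\cdot (m-t)!$ where $N_G$ is the number of unordered simple $(d_1,d_2)$-biregular graphs containing $G$, the lemma reduces to proving
\[
\frac{N_{G_{e'}}}{N_{G_e}} \;\ge\; \Bigl(1-\tfrac{\eps}{2}\Bigr)\cdot \frac{(d_1-\deg_G(u'))(d_2-\deg_G(v'))}{(d_1-\deg_G(u))(d_2-\deg_G(v))}.
\]

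Next, I would set up a degree-preserving $3$-edge switching between unordered biregular extensions of $G_e$ (not containing $e'$) and those of $G_{e'}$ (not containing $e$). Given $F\supset G_e$ with $e'\notin F$, a valid forward switching is a choice of auxiliary vertices $x, y$ with $(u',y), (x, v')\in F\setminus G$, $x\notin\{u,u'\}$, $y\notin\{v,v'\}$, and $(u,y), (x,v), (u',v')\notin F$; the swap
\[
\{(u,v),\,(u',y),\,(x,v')\}\longmapsto \{(u',v'),\,(u,y),\,(x,v)\}
\]
produces an $F'\supset G_{e'}$ with $e\notin F'$. Each switching has a unique inverse, so double counting via $\sum_F \#\mathrm{forward}(F) = \sum_{F'}\#\mathrm{reverse}(F')$ directly relates $N_{G_e}$ and $N_{G_{e'}}$.

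I would then estimate both sides. For fixed $F$, the number of valid $y$ is at most $|N_F(u')\setminus N_G(u')| = d_1-\deg_G(u')$ and at least this quantity minus the number of common $F$-neighbors of $u$ and $u'$ minus $O(1)$; analogously the number of valid $x$ is $d_2-\deg_G(v')$ minus common $F$-neighbors of $v,v'$. The reverse-switching estimates swap $u\leftrightarrow u'$ and $v\leftrightarrow v'$. The main terms on both sides of the double-counting identity produce exactly the target ratio, so everything reduces to absorbing the corrections into a factor $(1-\eps/2)$. The saturation hypotheses $\deg_G(u),\deg_G(u')\le(1-\eps/2)d_1$ (and the analogues on the right) guarantee the main terms are of order at least $\eps d_1/2$ and $\eps d_2/2$, so what remains is to bound the averaged common-neighbor counts and the contribution of $F$'s containing both $e$ and $e'$ (which lie outside the bijection).

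The main obstacle is precisely this error control: per-$F$ common-neighbor counts can be as large as $\min(d_1,d_2)$ in the worst case, so one must average. The average common-neighbor count for a fixed pair of vertices in a uniformly random extension of $G_e$ is of order $d_1d_2/m$, itself provable by a simpler single-edge switching that redistributes the two incident edges uniformly over the remaining slots; the number of $F\in\calS_{G_e}$ containing both $e$ and $e'$ is controlled the same way. The argument is self-referential (common-neighbor averages invoke the same approximate uniformity we are proving), and the three contributions in the hypothesized lower bound $\eps\ge C\bigl((d_1d_2/m)^{1/3}+\sqrt{\log m/\min(d_1,d_2)}\bigr)$ are calibrated exactly to close the bootstrap: the $(d_1d_2/m)^{1/3}$ term absorbs the averaged common-neighbor error after the cube-root from the iterated switching, while the $\sqrt{\log m/\min(d_1,d_2)}$ term provides the concentration tail bound eliminating atypical $F$'s with abnormally many common neighbors. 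Putting the estimates together gives the claimed bound $(1-\eps/2)$.
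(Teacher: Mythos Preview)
Your $3$-edge switching is exactly the operation the paper uses; the paper just packages it as a probabilistic coupling on ordered multigraph extensions rather than as a combinatorial double count on unordered simple extensions. Concretely, the paper samples $\bM=\bM_{G\cup e}$, then produces $\bM^{*}$ by deleting $e$, adding $e'$, replacing a random $(u',w_R)\in\bM\setminus(G\cup e)$ by $(u,w_R)$ and a random $(w_L,v')$ by $(w_L,v)$, and shows $\bM^{*}\sim\bM_{G\cup e'}$. Conditioning on $\bM$ simple and bounding $\Pr[\bM^{*}\text{ not simple}]$ is then the probabilistic analogue of your lower bound on forward-switching counts. Your reduction via \Cref{fact:conditioned-on-simple} to the ratio $|\calS_{G\cup e'}|/|\calS_{G\cup e}|$ times the residual-degree factor is correct and in fact is carried out verbatim in the paper's proof of \Cref{lem:large-stopping-time} (see \eqref{eq:simple-ratio}); the paper simply leaves the statement of \Cref{lem:simple-probability-ratio} in terms of the raw simplicity probabilities.

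The one place where your write-up goes astray is the claimed self-referential bootstrap. There is no circularity: the common-neighbor control you need is exactly \Cref{lem:random-extension-facts}, which is proved by an \emph{independent} switching argument comparing $|\calG_{\ell}|$ to $|\calG_{\ell-1}|$ and does not invoke any simplicity-probability ratio. With that lemma in hand, you restrict to the ``nice'' extensions $F$ satisfying $e'\notin F$, $\deg_{F|G\cup e}(u',u)\le \ell_1+\log m$, and $\deg_{F|G\cup e}(v',v)\le \ell_2+\log m$ (this is the paper's $\calG_{\nice}$), whose complement has measure $\le \eps/4$; on this set each forward count is at least $(d_1-\deg_G(u')-\ell_1-\log m-O(1))(d_2-\deg_G(v')-\ell_2-\log m-O(1))$. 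The cube-root condition $\eps^3\gtrsim d_1d_2/m$ then arises from requiring $\ell_i\lesssim \eps\cdot(\eps d_i)$, and the square-root condition $\eps^2\gtrsim \log m/\min(d_1,d_2)$ from requiring $\log m\lesssim \eps\cdot(\eps d_i)$; no iterated or bootstrapped switching is needed.
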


\paragraph{Typical random extension.}

The following is analogous to Lemma 11.20 in \cite{FK16} and will be used to prove \Cref{lem:simple-probability-ratio}.
It roughly states that a random extension $\bS_G$ of a graph $G$ behaves nicely.

\begin{lemma} \label{lem:random-extension-facts}
    Let $\eps m \geq 4d_1 d_2$.
    Let $G$ be a graph with $t \leq (1-\eps)m$ edges such that $\calS_G$ is non-empty, and let $\bS_G$ be a uniform sample from $\calS_G$.
    For every $e\notin G$, we have
    \begin{equation*}
        \Pr[e \in \bS_G] \leq \frac{2d_1 d_2}{\eps m}.
    \end{equation*}
    Moreover, for every $u_1, u_2\in [n_1]$ and $\ell \geq \ell_1 \coloneqq \ceil{\frac{4d_1^2 d_2}{\eps m}}$.
    \begin{equation*}
        \Pr\Brac{\deg_{\bS_G|G}(u_1,u_2) > \ell} \leq 2^{-(\ell-\ell_1)},
    \end{equation*}
    and for every $v_1, v_2\in [n_2]$ and $\ell \geq \ell_2 \coloneqq \ceil{\frac{4d_1 d_2^2}{\eps m}}$.
    \begin{equation*}
        \Pr\Brac{\deg_{\bS_G|G}(v_1,v_2) > \ell} \leq 2^{-(\ell-\ell_2)}.
    \end{equation*}
\end{lemma}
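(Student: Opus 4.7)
The plan is to prove all three bounds via the method of switchings applied to the uniform distribution on $\mathcal{S}_G$.  In each case we partition $\mathcal{S}_G$ according to the feature we want to control and then construct an explicit many-to-few switching between the two parts; double counting yields a tight ratio bound.

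For the first claim, write $\mathcal{A} = \{H \in \mathcal{S}_G : e \in H\}$ and $\mathcal{B} = \mathcal{S}_G \setminus \mathcal{A}$, where $e = (a,b)$.  From $H \in \mathcal{A}$, pick another edge $(a',b') \in H \setminus G$ with $a' \neq a$, $b' \neq b$, $(a,b') \notin H$, and $(a',b) \notin H$; under the hypothesis $\eps m \geq 4 d_1 d_2$ there are at least $m - t - O(d_1+d_2) \geq \tfrac{1}{2}\eps m$ valid choices.  Swapping $\{(a,b),(a',b')\}$ with $\{(a,b'),(a',b)\}$ produces an $H' \in \mathcal{B}$.  Conversely, each $H' \in \mathcal{B}$ is reached at most $d_1 d_2$ times, since a preimage is determined by choosing $b'$ with $(a,b') \in H' \setminus G$ and $a'$ with $(a',b) \in H' \setminus G$.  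Double counting gives $\Pr[e \in \bS_G] = |\mathcal{A}|/|\mathcal{S}_G| \leq 2 d_1 d_2/\eps m$.

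For the codegree bound on $u_1,u_2 \in [n_1]$, let $\mathcal{A}_{\geq \ell} = \{H \in \mathcal{S}_G : \deg_{H|G}(u_1,u_2) \geq \ell\}$.  The goal is a one-step ratio bound $|\mathcal{A}_{\geq \ell}|/|\mathcal{A}_{\geq \ell-1}| \leq \tfrac{1}{2}$ for every $\ell \geq \ell_1$, from which the exponential decay $\Pr[\deg_{\bS_G|G}(u_1,u_2) > \ell] \leq 2^{-(\ell-\ell_1)}$ follows by iterated multiplication.  Given $H \in \mathcal{A}_{\geq \ell}$, pick one of the $\geq \ell$ witnesses $w \in [n_2]$ (so $(u_1,w) \in H \setminus G$ and $(u_2,w) \in H$) together with an auxiliary edge $(a,b) \in H \setminus G$ satisfying $a \neq u_1$, the non-adjacency conditions $(u_1,b), (a,w) \notin H$, and the extra condition $(u_2,b) \notin H$.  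The last condition is crucial: it guarantees that the new edge $(u_1,b)$ in the swapped graph $H' = H \cup \{(u_1,b),(a,w)\} \setminus \{(u_1,w),(a,b)\}$ does not itself become a witness, so the codegree decreases by exactly one and $H' \in \mathcal{A}_{\geq \ell - 1}$.  The number of valid pairs $(w,(a,b))$ leaving $H$ is at least $\ell \cdot (\eps m - O(d_1 d_2))$.  On the other side, each $H' \in \mathcal{A}_{\geq \ell - 1}$ is reached by a bounded number of triples, obtained by enumerating the new edges $(u_1,b) \in H' \setminus G$ and $(a,w) \in H' \setminus G$ subject to the constraint $(u_2,w) \in H'$; combining these two counts matches the threshold $\ell_1$ in the lemma statement and gives the desired ratio bound.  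The third claim follows from the same argument with the roles of $[n_1]$ and $[n_2]$, and of $d_1$ and $d_2$, interchanged.

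The main technical obstacle is ensuring that each switching is genuinely reversible and changes the controlled feature by exactly the intended amount, which is why we need the additional non-adjacency conditions in the codegree switching (without them the new edge created by the swap could silently replace the destroyed witness, so the codegree would not strictly decrease and the iterated ratio bound would break down).  The hypothesis $\eps m \geq 4 d_1 d_2$ is used precisely to absorb the various ``boundary'' corrections of size $O(d_1 d_2)$ into the main term $\eps m$ on the forward side of each count; everything else reduces to careful enumeration.
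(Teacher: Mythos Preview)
Your approach is correct and is essentially the paper's: both statements are proved by exactly the same switchings (the ``simple switch'' for the edge probability, and the witness-destroying switch with the extra non-adjacency condition $(u_2,b)\notin H$ for the codegree), followed by double counting. One minor slip: in the first claim the number of excluded auxiliary edges is $O(d_1 d_2)$, not $O(d_1+d_2)$, since the conditions $(a,b')\notin H$ and $(a',b)\notin H$ each rule out up to $d_1 d_2$ edges of $H$; this does not affect your conclusion because the hypothesis $\eps m \ge 4 d_1 d_2$ still yields $m-t-O(d_1 d_2)\ge \tfrac{1}{2}\eps m$. The only cosmetic difference is that the paper partitions by the exact level sets $\{H:\deg_{H|G}(u_1,u_2)=\ell\}$ rather than your tail sets $\calA_{\ge\ell}$, but the ratio bound and the resulting exponential decay are identical.
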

\begin{proof}
    Fix $e = (u,v) \notin G$. We first define
    \begin{equation*}
        \calG_{\in e} = \Set{H \in \calS_G : e\in H}, \quad
        \calG_{\notin e} = \Set{H \in \calS_G : e \notin H} .
    \end{equation*}
    Then, $\Pr[e\in \bS_G] = \frac{|\calG_{\in e}|}{|\calG_{\in e}| + |\calG_{\notin e}|} \leq \frac{|\calG_{\in e}|}{|\calG_{\notin e}|}$.
    We will proceed to upper bound this ratio.

    Define a bipartite graph $B$ between $\calG_{\in e}$ and $\calG_{\notin e}$ as follows.
    We connect $H\in \calG_{\in e}$ and $H' \in \calG_{\notin e}$ if we can obtain $H'$ from $H$ with the following switching operation:
    choose an edge $(w,x) \in H\setminus G$ disjoint from $(u,v)$ such that $(u,x)$ and $(w,v)$ are not edges in $H$, and replace $(u,v)$, $(w,x)$ by $(u,x)$, $(w,v)$.
    We write $\deg_B(H)$ to denote the degree of $H$ in $B$.

    For $H \in \calG_{\in e}$, we can choose edge $(w,x)\in H\setminus G$ as long as $w$ is not a neighbor of $v$ and $x$ is not a neighbor of $u$ in $H$.
    Thus, there are at least $|H| - |G| - \deg_H(u) \cdot \deg_H(v) = m-t- d_1d_2$ choices, meaning $\deg_B(H) \geq m-t-d_1 d_2$.

    On the other hand, for $H' \in \calG_{\notin e}$, we must select $x$ to be a neighbor of $u$ and $w$ a neighbor of $v$ in $H'$.
    Thus, $\deg_B(H') \leq d_1 d_2$.

    Since $B$ is a bipartite graph, we must have
    \begin{equation*}
        |\calG_{\in e}| \cdot \min_{H\in \calG_{\in e}} \deg_B(H) \leq |\calG_{\notin e}| \cdot \max_{H'\in \calG_{\notin e}} \deg_B(H')
        \implies
        \frac{|\calG_{\in e}|}{|\calG_{\notin e}|} \leq \frac{d_1 d_2}{m-t-d_1 d_2}
        \leq \frac{2d_1 d_2}{\eps m},
    \end{equation*}
    since $t \leq (1-\eps)m$ and $\eps m \geq 4d_1 d_2$.

    To prove the second statement, for $u_1,u_2 \in [n_1]$, we define
    \begin{equation*}
        \calG_{\ell} = \Set{H \in \calS_G: \deg_{H|G}(u_1,u_2) = \ell}
    \end{equation*}
    for $\ell = 0,1,\dots$, and we adopt a similar strategy of constructing an auxiliary bipartite graph $B_{\ell}$ between $\calG_{\ell}$ and $\calG_{\ell-1}$.
    We connect $H\in \calG_{\ell}$ to $H'\in \calG_{\ell-1}$ if we can obtain $H'$ from $H$ by the following:
    (1) select a vertex $w$ contributing to $\deg_{H|G}(u_1,u_2)$, i.e., $(u_1,w) \in H\setminus G$ and $(u_2,w) \in H$,
    (2) select a disjoint edge $(u',w') \in H\setminus G$ such that there is no edge between $\{u',w'\}$ and $\{u_1,u_2,w\}$ in $H$,
    and (3) replace edges $(u_1, w)$ and $(u',w')$ with $(u',w)$ and $(u, w')$.

    By a similar analysis, fix $H\in \calG_{\ell}$, there are $\ell$ choices for $w$ and at least $m-t - 2d_1 d_2$ choices for $(u',w')$, thus $\deg_{B_\ell}(H) \geq \ell(m-t- 2 d_1 d_2) \geq \frac{1}{2}\ell \eps m$.
    On the other hand, fix $H' \in \calG_{\ell-1}$, there are at most $d_1$ choices for $w$, $d_1$ choices for $u'$, and $d_2$ choices for $w'$, thus $\deg_{B_{\ell}}(H') \leq d_1^2 d_2$.
    Therefore,
    \begin{equation*}
        \frac{|\calG_{\ell}|}{|\calG_{\ell-1}|}
        \leq \frac{2d_1^2 d_2}{\ell \eps m}
        \leq \frac{1}{2}
    \end{equation*}
    for all $\ell \geq \ell_1 \coloneqq \ceil{\frac{4d_1^2 d_2}{\eps m}}$.
    Therefore,
    \begin{equation*}
        \Pr\Brac{\deg_{\bS_G|G}(u_1,u_2) > \ell} = \frac{\sum_{k > \ell} |\calG_k|}{\sum_{k\geq 0} |\calG_k|}
        \leq \frac{\sum_{k>\ell} |\calG_k|}{|\calG_{\ell_1}|}
        \leq \sum_{k>\ell} 2^{-(k-\ell_1)}
        \leq 2^{-(\ell-\ell_1)}.
    \end{equation*}

    For $v_1,v_2\in [n_2]$, the same analysis shows that $\Pr\brac{\deg_{\bS_G|G}(v_1,v_2) > \ell} \leq 2^{-(\ell-\ell_2)}$ for all $\ell \geq \ell_2 \coloneqq \ceil{\frac{4d_1d_2^2}{\eps m}}$.
    This completes the proof.
\end{proof}

We can now complete the proof of \Cref{lem:simple-probability-ratio}.

\begin{proof}[Proof of \Cref{lem:simple-probability-ratio}]
    We will write $\bM = \bM_{G\cup e}$ and $\bM' = \bM_{G\cup e'}$ for simplicity.
    We will construct a coupling of $\bM$ and $\bM'$ such that they differ in at most 3 positions.
    Given $\bM$ and $e = (u,v)$, $e' = (u',v')$, we perform a switching operation:
    \begin{enumerate}
        \item Delete $e$ and add $e'$ to $\bM$.
        \item Recall $U$ and $V$ of length $m-(t+1)$ defined in \Cref{def:multigraph-extension}.
        \begin{itemize}
            \item If $e$ and $e'$ are disjoint, then randomly select a copy of $u'$ in $U$ and change to $u$, and similarly randomly select a copy of $v'$ in $V$ and change to $v$.

            \item If $e$ and $e'$ are not disjoint (w.l.o.g.\ assume $v=v'$), then just change a random copy of $u'$ in $U$ to $u$.
        \end{itemize}
        Then, connect edges according to $U$ and $V$ as in \Cref{def:multigraph-extension}.
    \end{enumerate}
    Note that step 2 is equivalent to sampling a random edge $(u',w_R)$ incident to $u'$ in $\bM \setminus (G \cup e)$ and replacing $(u',w_R)$ with $(u,w_R)$, and similarly replacing a random $(w_L, v')$ with $(w_L, v)$.

    We denote the resulting graph as $\bM^*$.
    It is clear that the resulting vector $V$ after step 2 is distributed as a random permutation of $(1,\dots,1, \dots, n_2,\dots,n_2)$ with multiplicity $d_2 - \deg_{G\cup e'}(v)$ for each $v$, thus $\bM^*$ has the same distribution as $\bM'$.

    We now analyze the probability of $\bM^*$ being simple conditioned on $\bM$ being simple.
    We first identify some nice properties of $\bM$, which we will show to occur with high probability.
    We define
    \begin{equation*}
        \calG_{\nice} = \Set{H \in \calS_{G\cup e}: e' \notin H,\ \deg_{H|G\cup e}(u',u) \leq \ell_1 + \log m \text{ and } \deg_{H|G\cup e}(v',v) \leq \ell_2 + \log m},
    \end{equation*}
    where $\ell_1 = \ceil{\frac{4d_1^2 d_2}{\eps m}}$ and $\ell_2 = \ceil{\frac{4d_1 d_2^2}{\eps m}}$ as defined in \Cref{lem:random-extension-facts}.


    First, by \Cref{fact:conditioned-on-simple} and \Cref{lem:random-extension-facts}, $\Pr[e' \in \bM | \bM \in \calS_{G\cup e}] = \Pr[e' \in \bS_{G\cup e}] \leq \frac{2d_1 d_2}{\eps m} \leq \eps/8$ due to our lower bound on $\eps$.
    Moreover, we have $\deg_{\bM|G\cup e}(u',u) > \ell_1 + \log m$ and $\deg_{\bM|G\cup e}(v',v) > \ell_2 + \log m$ with probability $\leq \frac{1}{m}$.
    Thus,
    \begin{equation*}
        \Pr\Brac{\bM \in \calG_{\nice} | \bM \in \calS_{G\cup e}} \geq 1 - \frac{\eps}{4}.
    \end{equation*}

    Now suppose $\bM \in \calG_{\nice}$.
    A parallel edge can occur in three ways: (1) if we replace $(u',w_R)\in \bM \setminus (G \cup e)$ with $(u,w_R)$ but $(u,w_R)\in \bM$ already, (2) similarly for $v'$, (3) if we select $(u',v)$ and $(u, v')$ (i.e., $w_R = v$ and $w_L = u$) resulting in $(u,v)$ being parallel.
    By the union bound over these 3 cases,
    \begin{equation*}
        \Pr\Brac{\bM^* \notin \calS_{G\cup e'} | \bM \in \calS_{G\cup e} }
        \leq \frac{\deg_{\bM|G\cup e}(u', u)}{ \deg_{\bM \setminus (G\cup e)}(u') }
        + \frac{\deg_{\bM|G\cup e}(v', v)}{ \deg_{\bM \setminus (G\cup e)}(v') }
        + \frac{1}{\deg_{\bM \setminus (G\cup e)}(u') \cdot \deg_{\bM \setminus (G\cup e)}(v')}.
    \end{equation*}
    By the assumption on the degrees of $G$, we know that $\deg_{\bM\setminus (G\cup e)}(u') \geq \eps d_1 / 2$ and $\deg_{\bM\setminus (G\cup e)}(v') \geq \eps d_2 / 2$.
    \begin{align*}
        \Pr\Brac{\bM^* \notin \calS_{G\cup e'} | \bM \in \calS_{G\cup e} }
        &\leq \frac{\ell_1 + \log m}{\eps d_1/2} + \frac{\ell_2 + \log m}{\eps d_2/2} + \frac{1}{\eps^2 d_1 d_2 /4} \\
        &\leq \frac{8d_1 d_2}{\eps^2 m} + \frac{2\log m}{\eps}\Paren{\frac{1}{d_1}+ \frac{1}{d_2}} + \frac{4}{\eps^2 d_1 d_2}.
    \end{align*}
    For $\eps \geq C \paren{\frac{d_1 d_2}{m}}^{1/3} + C\sqrt{\frac{\log m}{\min\{d_1,d_2\}}}$ for some large enough constant $C$, the above can be bounded by $\eps/4$.

    Finally, we can finish the proof. As $\bM^*$ is distributed as $\bM'$,
    \begin{align*}
        \frac{\Pr[\bM' \in \calS_{G\cup e'}]}{\Pr[\bM \in \calS_{G\cup e}]}
        &\geq \frac{\Pr[\bM \in \calG_{\nice},\ \bM \in \calS_{G\cup e}]}{\Pr[\bM \in \calS_{G\cup e}]} \cdot \frac{\Pr[\bM^* \in \calS_{G\cup e'}]}{\Pr[\bM \in \calG_{\nice}]} \\
        &\geq \Paren{1- \frac{\eps}{4}}^2 \geq 1-\frac{\eps}{2},
    \end{align*}
    completing the proof.
\end{proof}

\subsection{Proof of \texorpdfstring{\Cref{lem:large-stopping-time}}{Lemma~\ref{lem:large-stopping-time}}}
\label{sec:proof-of-large-stopping-time}


\paragraph{Degree bounds.}
We first prove a degree concentration bound.
To do so, we will need concentration results for random processes without replacement.
\begin{fact}[\cite{Hoe63}] \label{fact:with-without-replacement}
    Fix $1 \leq n \leq N$.
    Let $\Omega = (x_1,\dots,x_N)$ be a finite set of points in $\R$.
    Let $\bX_1,\dots,\bX_n$ be a random sample \emph{without} replacement from $\Omega$, let $\bY_1,\dots,\bY_n$ be a random sample \emph{with} replacement from $\Omega$, and let $\bX = \sum_{i=1}^n \bX_i$ and $\bY = \sum_{i=1}^n \bY_i$.
    Suppose $f: \R\to \R$ is continuous and convex, then
    \begin{equation*}
        \E[f(\bX)] \leq \E[f(\bY)].
    \end{equation*}
\end{fact}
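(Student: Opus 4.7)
This is Hoeffding's classical $1963$ comparison inequality. The plan is to realize the without-replacement sum $\bX$ as a conditional expectation of (a coupled copy of) the with-replacement sum $\bY$ on a common probability space, and then apply Jensen's inequality to the conditional expectation.

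Concretely, I would construct random variables $\bX', \bY'$ on a single probability space with $\bX'$ distributed as $\bX$ and $\bY'$ distributed as $\bY$, together with a sub-$\sigma$-algebra $\mathcal G$ such that $\bX'$ is $\mathcal G$-measurable and $\bX' = \E[\bY' \mid \mathcal G]$. Granted such a coupling, conditional Jensen's inequality gives
\[
    f(\bX') \;=\; f\bigl(\E[\bY' \mid \mathcal G]\bigr) \;\le\; \E[f(\bY') \mid \mathcal G],
\]
and taking total expectations yields $\E[f(\bX)] = \E[f(\bX')] \le \E[f(\bY')] = \E[f(\bY)]$, as desired.

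The key step is the construction of the coupling. One standard realization writes $\bY = \sum_{i=1}^n x_{\bZ_i}$ with $\bZ_1, \dots, \bZ_n$ i.i.d.\ uniform on $[N]$, enriches the probability space with an independent uniformly random permutation of $[N]$, and uses that permutation to ``repair'' the collisions among the $\bZ_i$'s into a uniformly random injection $[n] \hookrightarrow [N]$. Setting $\mathcal G$ to be the $\sigma$-algebra generated by this injection and averaging $\bY$ over the randomness in the choice of which positions were affected by the repair produces the without-replacement sum $\bX'$ as a conditional expectation of $\bY'$. The main obstacle is the combinatorial bookkeeping required to simultaneously verify that (i)~the resulting $\bX'$ truly has the uniform without-replacement marginal distribution, and (ii)~the conditional expectation identity $\bX' = \E[\bY' \mid \mathcal G]$ holds on the nose. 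Once this is in place, the Jensen step is immediate and gives the claim in one line; I would cite Hoeffding's original paper for the formal verification rather than reproduce it here.
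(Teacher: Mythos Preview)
The paper does not prove this statement at all: it is recorded as a \texttt{fact} environment with a bare citation to Hoeffding's 1963 paper and no accompanying argument. Your proposal --- realize $\bX$ as a conditional expectation of a coupled copy of $\bY$ and apply conditional Jensen --- is the standard route to this inequality and is correct in outline; since you also end by deferring the combinatorial verification to Hoeffding's original paper, your write-up effectively matches what the paper does, namely cite \cite{Hoe63} and move on.
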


\Cref{fact:with-without-replacement} implies that many concentration results known for sampling \emph{with} replacement, such as the Chernoff bound, can be transferred to the case of sampling \emph{without} replacement.
In particular, the following is a standard result for which we include the proof for completeness.

\begin{lemma}[Concentration for sampling without replacement]
\label{lem:concentraiton-without-replacement}
    Fix $1 \leq k,n \leq N$.
    Let $S\subseteq [N]$, let $\bT$ be a random sample of $n$ elements from $[N]$ without replacement, and let $p = \frac{n}{N}$.
    Then, for all $\delta \in (0,1)$,
    \begin{align*}
        &\Pr\Brac{ \bigabs{|S \cap \bT| - p|S|} \geq \delta p |S|} \leq 2 \exp \Paren{-\frac{1}{3}\delta^2 p|S|}, \\
        &\Pr\Brac{ \bigabs{|S \cap \bT| - p|S|} \geq \delta (1-p) |S|} \leq 2 \exp \Paren{-\frac{1}{3}\delta^2 (1-p)|S|}.
    \end{align*}
\end{lemma}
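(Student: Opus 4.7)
The plan is to reduce this to the standard multiplicative Chernoff bound for the binomial distribution by invoking the Hoeffding comparison in \Cref{fact:with-without-replacement}. Writing $\bT = \{\bX_1,\dots,\bX_n\}$, we have $|S\cap \bT| = \sum_{i=1}^n \bone[\bX_i \in S]$, where $\bX_1,\dots,\bX_n$ is sampled from $[N]$ without replacement. Its with-replacement counterpart $\bY = \sum_{i=1}^n \bone[\bY_i\in S]$ is distributed as $\mathrm{Bin}(n, |S|/N)$, with the same mean $\mu \coloneqq p|S|$.

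For the first inequality, I would carry out the textbook exponential-moment proof of the multiplicative Chernoff bound, but with the extra step of comparing moment generating functions: since $x\mapsto e^{tx}$ is convex for every real $t$, \Cref{fact:with-without-replacement} gives $\E[e^{t\bX}] \le \E[e^{t\bY}]$, and then Markov on $e^{t\bX}$ inherits from the binomial MGF bound the tail estimates $\Pr[\bX \ge (1+\delta)\mu] \le \exp(-\delta^2\mu/3)$ and $\Pr[\bX \le (1-\delta)\mu] \le \exp(-\delta^2\mu/2)$, whose union yields the claimed $2\exp(-\delta^2 p|S|/3)$.

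For the second inequality, I would use the duality between $\bT$ and its complement. Let $\bT^c \coloneqq [N]\setminus \bT$, which is a uniform sample without replacement of size $N-n$, so the corresponding sampling rate is $q \coloneqq (N-n)/N = 1-p$ and $\E[|S\cap\bT^c|] = (1-p)|S|$. Since $|S\cap\bT| - p|S| = -(|S\cap\bT^c| - (1-p)|S|)$, applying the first bound to the complement sample gives exactly
\[
\Pr\Brac{\bigabs{|S\cap \bT| - p|S|} \ge \delta(1-p)|S|} \le 2\exp\Paren{-\tfrac{1}{3}\delta^2 (1-p)|S|}.
\]

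There is no real obstacle here: the only thing to double-check is that the convexity hypothesis of \Cref{fact:with-without-replacement} is satisfied by both $x\mapsto e^{tx}$ and $x\mapsto e^{-tx}$ (which it is for any $t\in\R$), so the MGF transfer is legitimate on both tails. The entire argument is a couple of lines once one notes these observations.
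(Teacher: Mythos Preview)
Your proposal is correct and essentially identical to the paper's proof: both use \Cref{fact:with-without-replacement} (via convexity of $x\mapsto e^{tx}$) to transfer the multiplicative Chernoff bound from the binomial to the without-replacement sum for the first inequality, and both obtain the second inequality by applying the first to the complement sample $\bT^c$ of size $N-n$ together with the identity $|S\cap\bT|-p|S| = -\bigl(|S\cap\bT^c|-(1-p)|S|\bigr)$.
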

\begin{proof}
    Let $\Omega = (x_1,\dots,x_N)$ where $x_i = 1$ if $i \in S$ and 0 otherwise.
    Let $\bX_1,\dots,\bX_n$ be a random sample without replacement from $\Omega$.
    Then, $\bX = \sum_{i=1}^n \bX_i$ has the same distribution as $|S \cap \bT|$.

    By \Cref{fact:with-without-replacement}, we can apply the Chernoff bound as if the $\bX_i$'s are sampled with replacement.
    Let $\bY_1,\dots,\bY_n$ be samples from $\Omega$ with replacement, then $\bY_i$ are i.i.d.\ Bernoulli random variables with $\E[\bY_i] = \frac{|S|}{N}$, and $\E \sum_{i=1}^n \bY_i = \frac{n|S|}{N} = p|S|$.
    The first inequality then follows from the Chernoff bound.

    For the second inequality, we look at $|S \setminus \bT|$.
    Due to symmetry, sampling $n$ elements from $[N]$ without replacement is equivalent to sampling $N-n$ elements and taking the complement.
    Let $\bX_1',\dots,\bX_{N-n}'$ be a random sample without replacement from $\Omega$.
    Then, $|S\setminus \bT|$ is distributed as $\bX' = \sum_{i=1}^{N-n} \bX_i'$, and $\E[\bX'] = (N-n) \frac{|S|}{N} = (1-p)|S|$.
    By \Cref{fact:with-without-replacement} (transferring to sampling with replacement) and the Chernoff bound,
    \begin{equation*}
        \Pr\Brac{ \bigabs{|S \setminus \bT| - (1-p)|S|} \geq \delta (1-p) |S|} \leq 2 \exp \Paren{-\frac{1}{3}\delta^2 (1-p)|S|}.
    \end{equation*}
    But $|S \cap \bT| + |S \setminus \bT| = |S|$, so $\bigabs{|S \setminus \bT| - (1-p)|S|} = \bigabs{|S \cap \bT| - p|S|}$.
    This completes the proof.
\end{proof}

Recall that $\bR = (\boldf_1,\dots,\boldf_m) \sim \Reg_{n_1,n_2,d_1,d_2}$ is a (ordered) random $(d_1,d_2)$-biregular graph on $L= [n_1]$, $R = [n_2]$,
and we write $\bR_t = (\boldf_1,\dots,\boldf_t)$.

\begin{lemma}[Degree concentration] \label{lem:degree-concentration}
    Consider the random graph process $\bR_t = (\boldf_1,\dots,\boldf_t)$ for $t=0,1,\dots, m$.
    Let $\eps \in (0,1)$.
    With probability $1- O(m^{-1})$, for all $t \leq (1-\eps)m$, letting $p = \frac{t}{m}$, we have
    \begin{align*}
        \Abs{\deg_{\bR_t}(u) - p d_1} &\leq 3\sqrt{(1-p)d_1\log m}, \quad \forall u \in [n_1], \\
        \Abs{\deg_{\bR_t}(v) - p d_2} &\leq 3\sqrt{(1-p)d_2\log m}, \quad \forall v \in [n_2].
    \end{align*}
\end{lemma}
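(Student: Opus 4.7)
The plan is to exploit a symmetry of the graph process: conditioned on the final graph $\bR$, the prefix $\bR_t = (\boldf_1,\dots,\boldf_t)$ is a uniformly random size-$t$ subset of the $m$ edges of $\bR$. For any fixed $u\in[n_1]$, exactly $d_1$ edges of $\bR$ are incident to $u$, so $\deg_{\bR_t}(u)$ is the intersection size of a fixed $d_1$-subset with a uniformly random $t$-subset of a population of size $m$. This is a hypergeometric random variable with parameters $(m, d_1, t)$ and mean $pd_1$, and crucially the conditional distribution does not depend on $\bR$; hence unconditionally $\deg_{\bR_t}(u)\sim \mathrm{Hyp}(m,d_1,t)$. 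The analogous claim for $v\in[n_2]$ gives $\deg_{\bR_t}(v)\sim \mathrm{Hyp}(m,d_2,t)$.

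With this reduction to hypergeometric concentration, I would invoke the ``$1-p$'' form of \Cref{lem:concentraiton-without-replacement} (taking $N=m$, sample size $t$, and $S$ the set of $d_1$ edges incident to $u$), which yields
\[
\Pr\!\left[\,\bigl|\deg_{\bR_t}(u) - pd_1\bigr| \ge \delta(1-p)d_1\,\right] \le 2\exp\!\left(-\tfrac{1}{3}\delta^2(1-p)d_1\right).
\]
Choosing $\delta = 3\sqrt{\log m/((1-p)d_1)}$ makes the deviation exactly $3\sqrt{(1-p)d_1\log m}$ and makes the right-hand side $2m^{-3}$, provided $\delta\le 1$, i.e.\ $(1-p)d_1 \ge 9\log m$. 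In the complementary regime $(1-p)d_1 < 9\log m$ the target deviation already exceeds $(1-p)d_1$, so the upper tail is automatic from $\deg_{\bR_t}(u)\le d_1$. The lower tail in this small-mean regime is rephrased as an upper-tail deviation on the complement $d_1 - \deg_{\bR_t}(u)$ (whose mean is $(1-p)d_1$) and handled by a Bernstein/Chernoff bound for hypergeometrics, obtained by transferring the i.i.d.\ Bernoulli tail estimates via \Cref{fact:with-without-replacement}; this again gives failure probability at most $m^{-3}$ up to adjusting the constant $3$ in the statement.

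The final step is a union bound. The number of vertices is $n_1 + n_2 \le 2m$, and $t$ ranges over at most $m+1$ values, so altogether there are $O(m^2)$ ``bad events'' of the form ``vertex $u$ (resp.\ $v$) deviates at time $t$'', each of failure probability $O(m^{-3})$. This yields total failure probability $O(m^{-1})$, matching the claim. The only real obstacle is the small-mean regime where $(1-p)d_1 = O(\log m)$: there one must verify that a Chernoff bound on the hole-count $d_1-\deg_{\bR_t}(u)$ gives a tail of $m^{-3}$ at the stated absolute deviation, which may require tracking the numerical constant $3$ carefully (and possibly inflating it to an absolute constant, since the lemma statement itself uses the constant $3$). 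Everything else is a clean application of concentration for sampling without replacement followed by a union bound.
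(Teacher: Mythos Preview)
Your proposal is correct and follows essentially the same approach as the paper: view $\bR_t$ as a uniformly random $t$-subset of the $m$ edges of $\bR$, apply the ``$1-p$'' form of \Cref{lem:concentraiton-without-replacement} with $|S|=d_1$ (resp.\ $d_2$) and $\delta = 3\sqrt{\log m/((1-p)d_1)}$ to get failure probability $2m^{-3}$, then union bound over all $t$ and all vertices. In fact you are slightly more careful than the paper, which silently assumes $\delta\in(0,1)$ when invoking \Cref{lem:concentraiton-without-replacement}; your separate treatment of the regime $(1-p)d_1 < 9\log m$ addresses exactly this point.
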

\begin{proof}
    Fix a time step $1 \leq t \leq (1-\eps)m$ and a vertex $u\in [n_1]$.
    Since $\bR_t$ can be viewed as sampling $t$ edges from $m$ total edges without replacement, we can apply the second inequality in \Cref{lem:concentraiton-without-replacement} with $|S| = d_1$ and $\delta = 3\sqrt{\frac{\log m}{(1-p) d_1}}$ (note that $1-p \geq \eps > 0$):
    \begin{equation*}
        \Pr\Brac{ \Abs{\deg_{\bR_t}(u) - p d_1} \geq 3\sqrt{(1-p)d_1\log m}} \leq 2m^{-3}.
    \end{equation*}
    By the same analysis, the inequality is true for $v\in [n_2]$ if we replace $d_1$ with $d_2$.

    The lemma now follows from taking the union bound over all $t \leq (1-\eps)m$ and $u\in [n_1]$, $v\in[n_2]$.
\end{proof}

We are now ready to prove \Cref{lem:large-stopping-time}.

\begin{proof}[Proof of \Cref{lem:large-stopping-time}]
    We would like to prove that for all $e\in K_{n_1,n_2} \setminus \bR_t$, $\Pr[\boldf_{t+1} = e | \bR_t] \geq \frac{1-\eps}{n_1 n_2 - t}$ for every $t \leq (1-\eps)m$.
    It suffices to prove that for every $e, e' \in K_{n_1,n_2}\setminus \bR_t$,
    \begin{equation*}
        \frac{\Pr[\boldf_{t+1} = e' | \bR_t]}{\Pr[\boldf_{t+1} = e | \bR_t]} \geq 1-\eps,
    \end{equation*}
    since the average $\Pr[\boldf_{t+1} = e | \bR_t]$ over all $e$ must be $\frac{1}{n_1 n_2 - t}$, hence $\max_e \Pr[\boldf_{t+1} = e | \bR_t] \geq \frac{1}{n_1n_2 - t}$.

    Recalling the definition of extensions in \Cref{def:graph-extension}, we have
    \begin{equation*}
        \frac{\Pr[\boldf_{t+1} = e'| \bR_t]}{\Pr[\boldf_{t+1} = e | \bR_t]}
        = \frac{|\calS_{\bR_t \cup e'}|}{|\calS_{\bR_t\cup e}|}.
        \numberthis \label{eq:ratio}
    \end{equation*}
    We now consider multigraph extensions of $\bR_t \cup e$ and $\bR_t \cup e'$.
    By \Cref{fact:conditioned-on-simple},
    \begin{align*}
        \Pr\Brac{\bM_{\bR_t\cup e} \in \calS_{\bR_t\cup e}} &= |\calS_{\bR_t\cup e}| \cdot \frac{\prod_{u\in [n_1]} (d_1 - \deg_{\bR_t\cup e}(u))! \cdot \prod_{v\in [n_2]} (d_2 - \deg_{\bR_t\cup e}(v))!}{((m-t)!)^2}, \\ 
        \Pr\Brac{\bM_{\bR_t\cup e'} \in \calS_{\bR_t\cup e'}} &= |\calS_{\bR_t\cup e'}| \cdot \frac{\prod_{u\in [n_1]} (d_1 - \deg_{\bR_t\cup e'}(u))! \cdot \prod_{v\in [n_2]} (d_2 - \deg_{\bR_t\cup e'}(v))!}{((m-t)!)^2}.
    \end{align*}
    Thus, let $e = (u,v)$ and $e' = (u',v')$,
    \begin{equation*}
        \frac{\Pr[\bM_{\bR_t \cup e'} \in \calS_{\bR_t \cup e'}]}{\Pr[\bM_{\bR_t \cup e} \in \calS_{\bR_t \cup e}]}
        = \frac{|\calS_{\bR_t\cup e'}|}{|\calS_{\bR_t\cup e}|} \cdot \frac{(d_1 - \deg_{\bR_t}(u)) (d_2 - \deg_{\bR_t}(v))}{(d_1 - \deg_{\bR_t}(u')) (d_2 - \deg_{\bR_t}(v'))}.
        \numberthis \label{eq:simple-ratio}
    \end{equation*}
    Let $p = \frac{t}{m}$.
    By the concentration of degrees (\Cref{lem:degree-concentration}), with probability $1-O(m^{-1})$, for $t \leq (1-\eps)m$ (hence $1-p \geq \eps$),
    \begin{equation*}
        \frac{d_1 - \deg_{\bR_t}(u)}{(1-p) d_1} \in 1 \pm 3\sqrt{\frac{\log m}{(1-p)d_1}}
        \in 1 \pm 3 \sqrt{\frac{\log m}{\eps d_1}}.
    \end{equation*}
    Our assumption on $\eps$ implies that $\eps \geq C(\frac{\log m}{\min\{d_1, d_2\}})^{1/3}$ for a large enough $C$, so the above is bounded by $1 \pm \eps/8$.
    The same also holds for $d_2 - \deg_{\bR_t}(v)$ and $d_2 - \deg_{\bR_t}(v')$.
    Thus, we have
    \begin{equation*}
        (\ref{eq:simple-ratio}) \leq \frac{|\calS_{\bR_t\cup e'}|}{|\calS_{\bR_t\cup e}|} (1 + \eps/2).
    \end{equation*}
    On the other hand, the degree bounds also allow us to apply \Cref{lem:simple-probability-ratio}:
    \begin{equation*}
        (\ref{eq:simple-ratio}) \geq 1 - \frac{\eps}{2}.
    \end{equation*}
    Therefore, by \Cref{eq:ratio} we have $\frac{\Pr[\boldf_{t+1} = e' | \bR_t]}{\Pr[\boldf_{t+1} = e | \bR_t]} \geq 1-\eps$ with probability $1-O(m^{-1})$, completing the proof.
\end{proof}

\end{document}